\newtheorem{thm}{Theorem}[section]
\newtheorem{cor}[thm]{Corollary}
\newtheorem{prop}[thm]{Proposition}
\newtheorem{lem}[thm]{Lemma}
\theoremstyle{definition}
\newtheorem{dfn}[thm]{Definition}
\newtheorem{exm}[thm]{Example}
\newtheorem{remark}[thm]{Remark}
\newcommand\tuple[1]{\underline{#1}}
\newcommand\forallst{\forall^{\mathrm{st}}}
\newcommand\forallhyp{\forall^{\mathrm{hyp}}}
\newcommand\existst{\exists^{\mathrm{st}}}
\newcommand\existshyp{\exists^{\mathrm{hyp}}}
\newcommand\restr[2]{{
  \left.\kern-\nulldelimiterspace
  #1
  \vphantom{\big|}
  \right|_{#2}
  }}
\begin{document}
\title{Nonstandard functional interpretations \\ and categorical models \\[5pt] }
\author{Amar Hadzihasanovic\footnote{Department of Computer Science, University of Oxford, Wolfson Building, Parks Road, OX1 3QD Oxford. Email address: amarh@cs.ox.ac.uk. Supported by an EPSRC Doctoral Training Grant.}\, and Benno van den Berg\footnote{Institute for Logic, Language and Computation, Universiteit van Amsterdam, P.O.\ Box 94242, 1090 GE Amsterdam. Email address: bennovdberg@gmail.com. Supported by the Netherlands Organisation for Scientific Research (NWO).}}
\date{4 February 2014}
\maketitle
\vspace{-20pt}
\begin{abstract}
\noindent
Recently, the second author, Briseid and Safarik introduced \emph{nonstandard Dialectica}, a functional interpretation that is capable of eliminating instances of familiar principles of nonstandard arithmetic - including overspill, underspill, and generalisations to higher types - from proofs. We show that, under few metatheoretical assumptions, the properties of this interpretation are mirrored by first order logic in a constructive sheaf model of nonstandard arithmetic due to Moerdijk, later developed by Palmgren. In doing so, we also draw some new connections between nonstandard principles, and principles that are rejected by strict constructivism.

Furthermore, we introduce a variant of the Diller-Nahm interpretion with two different kinds of quantifiers (with and without computational meaning), similar to Hernest's light Dialectica interpretation, and show that one can obtain nonstandard Dialectica from this by weakening the computational content of the existential quantifiers -- a process we call \emph{herbrandisation}. We also define a constructive sheaf model mirroring this new functional interpretation and show that the process of herbrandisation has a clear meaning in terms of these sheaf models.

\end{abstract}
\vspace{-15pt}
\normalsize \tableofcontents

\section{Introduction}
The focus of this paper stands at a confluence of two quite different paths in mathematical logic.

On one end, there is nonstandard arithmetic, and analysis: a subject that has been an upshot of classical model theory, and even after it was recognised that it was amenable to a syntactic treatment, as in Nelson's \emph{internal set theory} \cite{nelson1977internal}, it mostly remained within the boundaries of classical set theory. On the other end, there is the markedly proof-theoretic topic of functional interpretations, stemming from G\"odel's \emph{Dialectica interpretation} \cite{godel1958bisher}; and, in particular, its recent revival through the programme of \emph{proof mining} \cite{kohlenbach2002proof}.

Where these ends meet, is in a general inclination towards the \emph{constructivisation} of mathematics. The first explicit model of nonstandard analysis, due to Schmieden and Laugwitz \cite{schmieden1958erweiterung}, was actually fully constructive, but had a quite weak transfer property. On the other hand, Robinson's model of nonstandard arithmetic \cite{robinson1996nonstandard}, and subsequent ones which were elementary extensions of the standard model, were built from nonconstructive objects, such as nonprincipal ultrafilters of sets. Even in the syntactic approach, it was soon realised that many useful principles led to instances of the excluded middle. But did nonstandard analysis really have \emph{nothing} to offer to constructive analysts?

Not everyone was convinced, including, notably, Per Martin-L\"of, who pushed the question in the early 1990s: first, Erik Palmgren succeeded in building a model with a restricted, yet useful transfer principle \cite{palmgren1995constructive}; then, in 1995, Ieke Moer\-dijk described the first constructive model of nonstandard arithmetic with a \emph{full} transfer principle - a topos of sheaves over a category of filters \cite{moerdijk1995model}. Later, by working in this topos, Palmgren provided simplified, nonstandard proofs of several theorems of constructive analysis, and so demonstrated the usefulness of this model \cite{palmgren1997sheaf, palmgren1998developments, palmgren2000constructive, palmgren2002unifying}.

But if nonstandard proofs do provide some constructive information, we might as well try to extract it in an automated fashion. In 2012, the second author, Briseid and Safarik succeeded in defining a functional interpretation, \emph{nonstandard Dialectica} \cite{van2012functional}, which could eliminate nonstandard principles from proofs of intuitionistic arithmetic in all finite types, enriched, \emph{\`a la} Nelson, with a predicate $\mathrm{st}_\sigma(x)$, ``$x$ is standard'', for all types $\sigma$; also yielding a proof of conservativity of these principles over the base system. Section \ref{sec:nonst_dial} is a review.

Now, some of the principles validated by nonstandard Dialectica were known to hold in Moer\-dijk's topos - including a form of Nelson's idealisation axiom, an underspill principle, and the undecidability of the standardness predicate. Our first aim was to investigate how deep this connection would go.

And a deep connection it is: with the exception of one principle, which requires an assumption about the metatheory, \emph{all} the characteristic principles of nonstandard Dialectica are true in the topos model, for free. Section \ref{sec:filtertopos} is devoted to showing this. During this investigation, we also chanced upon two new principles, \emph{sequence overspill} and \emph{sequence underspill}, which appear to be more natural equivalents of principles that have been taken into consideration, earlier, in the context of proof-theoretic nonstandard arithmetic. We map their relation to other familiar principles from nonstandard and constructive analysis in Section \ref{sec:nonst_dial}.

Several characteristic principles of nonstandard Dialectica have a peculiarity: they are \emph{herbrandised}. This is explained in more detail in Section \ref{sec:uniform}; in short, where ``traditional'' functional interpretations would produce a \emph{single} witness of an existential statement, these principles produce a \emph{finite sequence} of \emph{potential} witnesses, of which at least one is an actual witness. This property destroys the computational meaning of intuitionistic disjunction, yet seems unavoidable in the interpretation of nonstandard arithmetic.

The categorical analysis of nonstandard Dialectica supplied a very convenient way of ``de-herbrandising'', through a simple change in the Grothendieck topology, down from \emph{finite} covers to \emph{singleton} covers. Full transfer is lost - in the new topos, disjunction is stronger than in the metatheory - as well as the link to nonstandard arithmetic; but the de-herbrandised principles induce a new functional interpretation, which we call \emph{uniform Diller-Nahm}, and is the main focus of Section \ref{sec:uniform}.

Uniform Diller-Nahm can be seen as an extension of the Diller-Nahm variant of the Dialectica interpretation \cite{diller1974variante}, and has some striking similarities to light Dialectica \cite{hernest2005light}, a variant of Dialectica with two different kinds of quantifiers - computational, and non computational - introduced in 2005 by Mircea-Dan Hernest, for the purpose of more efficient program extraction from formal proofs. Yet, irrespective of its technical value, the characteristic proof system of uniform Diller-Nahm might have a dignity of its own.

In 1985, Vladimir Lifschitz proposed a simple extension of Heyting arithmetic, where a distinction could be made between calculable, and non calculable natural numbers \cite{lifschitz1985calculable}; a synthesis of classical and intuitionistic arithmetic. Under the interpretation of the predicate $\mathrm{st}(x)$ as ``$x$ is calculable'', the proof system of uniform Diller-Nahm seems to be well-suited for Lifschitz's intended calculus. This is also discussed in Section \ref{sec:uniform}.

Finally, in Section \ref{sec:conclusions}, we survey some open questions.

\emph{Note.} This work is based on research done by the first author, under the supervision of the second author, in partial fulfillment of the requirements for the degree of \emph{Laurea Magistrale} in Mathematics at the University of Pavia.

\section{The nonstandard Dialectica interpretation} \label{sec:nonst_dial}
We start by briefly recalling the definition of the system E-HA$^{\omega*}_\mathrm{st}$, as introduced in \cite{van2012functional}; we refer to the original paper for a detailed presentation.

\subsection{The system $\mathrm{E\mbox{-}HA}^{\omega*}_\mathrm{st}$}
We take E-HA$^{\omega*}$ to be an extension of the system called E-HA$^\omega_0$ in \cite{troelstra1973metamathematical}, with additional types and constants for handling \emph{finite sequences}. More precisely, the collection of types $\textbf{T}^*$ is generated by the inductive clauses
\begin{itemize}
	\item[$\triangleright$] $0$ is in $\textbf{T}^*$;
	\item[$\triangleright$] if $\sigma$, $\tau$ are in $\textbf{T}^*$, then $\sigma \to \tau$ and $\sigma^*$ are in $\textbf{T}^*$;
\end{itemize}
and, for all types $\sigma, \tau$ in $\textbf{T}^*$, we have constants $\langle\rangle_\sigma:\sigma$ (empty sequence), $C: \sigma \to \sigma^* \to \sigma^*$ (prepending operator), and $\mathrm{L}_{\sigma, \tau} : \sigma \to (\sigma \to \tau \to \sigma) \to (\tau^* \to \sigma)$ (list recursor), with defining axioms
\begin{align*}
	& \mathsf{SA}: \quad \forall s: \sigma^* \,(s = \langle \rangle_\sigma \lor \exists x:\sigma\,\exists s': \sigma^* \, (s = Cxs'))\;, \\
	& \begin{matrix*}[l] \begin{cases}
		& \mathrm{L}_{\sigma,\tau} xy\langle\rangle_\tau =_\sigma x\;, \\
		& \mathrm{L}_{\sigma,\tau} xy(Czs) =_\sigma y(\mathrm{L}_{\sigma,\tau} xys)\langle z \rangle\;,
	\end{cases}
		& x : \sigma,\; y: \sigma \to \tau \to \sigma,\; z:\tau, \; s:\tau^*\;,
\end{matrix*}\end{align*}
where $\langle z \rangle$ is the ``singleton'' $Cz\langle\rangle_\tau$.

\begin{itemize} \item[] \textbf{Notation.} We use $s,t,u,v$ (and $s', t',\ldots$) as variables of sequence type.
\end{itemize}
This system has an extensionality axiom
\begin{equation*}
		\forall f,g: \sigma \to \tau \,(f =_{\sigma \to \tau} g \leftrightarrow \forall x:\sigma \, fx =_\tau gx )
\end{equation*}
for all types $\sigma$, $\tau$.

Using the projectors and combinators from the language of E-HA$^\omega_0$, it is possible, already in the latter system, to introduce a coding of finite sequences of elements of any type, as in \cite[p.\ 59]{kohlenbach2008applied}; therefore, E-HA$^{\omega*}$ is a \emph{definitional}, hence conservative, extension of E-HA$^\omega_0$. However, finite sequences seem to be quite ubiquitous in arguments of nonstandard arithmetic, mostly due to the expanded notion of ``finiteness'' in a nonstandard model; so it seems preferable to have them built into our syntax.

Since every type is provably inhabited, we can conservatively add for every type $\sigma$ a constant $\emptyset_\sigma$. Using the list recursor, one can define all the basic operations on finite sequences one needs in practice.
\begin{enumerate}[label=(\roman*)]
	\item A \emph{length function} $|\cdot|:\sigma^* \to 0$, satisfying
	\begin{equation*}
		|\langle\rangle_\sigma| = 0\;, \qquad |Cas| = \mathrm{S}|s|\;,
	\end{equation*}
	for $s:\sigma^*$, $a:\sigma$.
	
	\item A \emph{projection function} $(s,i) \mapsto s_i$ of type $\sigma^* \to 0 \to \sigma$, satisfying
	\begin{align*}
		(\langle\rangle_\sigma)_i & = \emptyset_\sigma \qquad \text{for all } i, \\
		(Cas)_0 & = a\;, \\
		(Cas)_{\mathrm{S}i} & = s_i\;.
	\end{align*}
	
	\item A \emph{concatenation} operation $\cdot: \sigma^* \to \sigma^* \to \sigma^*$, such that
	\begin{equation*}
		\langle\rangle_\sigma \cdot t = t\;, \qquad Cas\cdot t = Ca(s\cdot t)\;.
	\end{equation*}
	As expected, concatenation is provably associative, so we will iterate it without bothering with brackets.		
\end{enumerate}

The following, easy properties are all established in \cite{van2012functional}.
\begin{lem} \label{lem:seq_length}
\begin{enumerate}[label=(\alph*)]
	\item \emph{E-HA}$^{\omega*} \vdash \forall s: \sigma^*\, ( |s| = 0 \leftrightarrow s = \langle \rangle_\sigma )\;,$
	\item \emph{E-HA}$^{\omega*} \vdash \forall n:0 \,\forall s:\sigma^* \,( |s| = \mathrm{S}n \leftrightarrow \exists x:\sigma\,\exists t: \sigma^* \,(s = Cxt \land |t| = n ))\;.$
\end{enumerate}
\end{lem}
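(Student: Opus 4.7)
The plan is to handle the two parts separately, using the sequence axiom $\mathsf{SA}$ for case analysis on $s$ together with the defining equations of $|\cdot|$ and the basic Peano properties of $0$ and $\mathrm{S}$ available in E-HA$^{\omega*}$.

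For part (a), the right-to-left direction is immediate from the defining equation $|\langle\rangle_\sigma| = 0$. For left-to-right, apply $\mathsf{SA}$ to the given $s:\sigma^*$: either $s = \langle\rangle_\sigma$, in which case we are done, or there exist $x:\sigma$, $s':\sigma^*$ with $s = Cxs'$. In the latter case $|s| = |Cxs'| = \mathrm{S}|s'|$, so the hypothesis $|s| = 0$ would give $\mathrm{S}|s'| = 0$, contradicting the Peano axiom that $\mathrm{S}n \neq 0$; hence this case is impossible, and we conclude $s = \langle\rangle_\sigma$.

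For part (b), the right-to-left direction again follows directly from the defining equation: if $s = Cxt$ with $|t| = n$, then $|s| = \mathrm{S}|t| = \mathrm{S}n$. For left-to-right, fix $s$ with $|s| = \mathrm{S}n$ and apply $\mathsf{SA}$. The case $s = \langle\rangle_\sigma$ would give $|s| = 0 = \mathrm{S}n$, again contradicting $\mathrm{S}n \neq 0$; so there exist $x:\sigma$, $t:\sigma^*$ with $s = Cxt$. Then $\mathrm{S}n = |s| = \mathrm{S}|t|$, and by injectivity of the successor we obtain $|t| = n$, witnessing the existential.

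There is no real obstacle here: everything reduces to the defining clauses of the length function, the instance $\mathsf{SA}$ of the sequence-analysis axiom, and the two basic Peano facts ($\mathrm{S}n \neq 0$ and injectivity of $\mathrm{S}$), all of which are available in E-HA$^{\omega*}$. The only thing to be mildly careful about is staying inside the intuitionistic fragment when doing the case split, but since $\mathsf{SA}$ is explicitly a disjunction in the language and both branches either conclude the goal or derive a contradiction, this is unproblematic.
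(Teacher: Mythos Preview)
Your proof is correct and follows essentially the same approach as the paper: both use the sequence axiom $\mathsf{SA}$ to do a case split on $s$, rule out the impossible branch via the Peano axioms for $0$ and $\mathrm{S}$, and note that the converse directions are immediate from the defining equations of $|\cdot|$. Your version is slightly more explicit in invoking injectivity of $\mathrm{S}$ for part~(b), which the paper leaves implicit.
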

\begin{proof} Let $s:\sigma^*$. By the sequence axiom $\mathsf{SA}$, either $s = \langle \rangle_\sigma$ or $s = Cxt$ for some $x:\sigma$, $t:\sigma^*$. If $|s| = 0$, the latter case leads to a contradiction, for $|s| = \mathrm{S}|t| > 0$.

If $|s| = \mathrm{S}n$, then the former case leads to a contradiction, and we have proven the directions left to right. The converses are immediate.
\end{proof}

\begin{prop} \label{prop:ia_star}
\emph{E-HA}$^{\omega*}$ proves the induction schema for sequences
\begin{equation*}
	\mathsf{IA}^*: \quad \big(\varphi(\langle\rangle_\sigma) \land \forall x:\sigma\,\forall s:\sigma^* \,(\varphi(s) \to \varphi(Cxs) )\big) \to \forall s:\sigma^* \, \varphi(s)\;.
\end{equation*}
\end{prop}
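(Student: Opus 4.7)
The plan is to reduce sequence-induction to ordinary numerical induction on the length of a sequence, using Lemma~\ref{lem:seq_length} as the bridge between the two. Since E-HA$^{\omega*}$ inherits full arithmetical induction from E-HA$^\omega_0$, this should be enough; there is no need to postulate a separate induction principle for $\sigma^*$.

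Concretely, fix a formula $\varphi$ and assume the hypothesis of $\mathsf{IA}^*$, that is, $\varphi(\langle\rangle_\sigma)$ together with the successor clause $\forall x:\sigma\,\forall s:\sigma^*\,(\varphi(s) \to \varphi(Cxs))$. Define the auxiliary formula
\begin{equation*}
  \psi(n) \;:\equiv\; \forall s:\sigma^*\,(|s| = n \to \varphi(s))\;,
\end{equation*}
and prove $\forall n:0\,\psi(n)$ by induction on $n$ in E-HA$^{\omega*}$. For the base case $\psi(0)$, given $s:\sigma^*$ with $|s|=0$, Lemma~\ref{lem:seq_length}(a) gives $s = \langle\rangle_\sigma$, so $\varphi(s)$ follows from the base hypothesis. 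For the step, assume $\psi(n)$ and let $s:\sigma^*$ with $|s| = \mathrm{S}n$. By Lemma~\ref{lem:seq_length}(b), there exist $x:\sigma$ and $t:\sigma^*$ with $s = Cxt$ and $|t| = n$; the induction hypothesis $\psi(n)$ yields $\varphi(t)$, and then the successor clause of $\mathsf{IA}^*$ gives $\varphi(Cxt) = \varphi(s)$.

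To finish, take an arbitrary $s:\sigma^*$; since the length function $|\cdot|$ is a term of type $\sigma^* \to 0$, $|s|$ is a natural number, so instantiating $\forall n\,\psi(n)$ at $n := |s|$ and then at this $s$ yields $\varphi(s)$. Discharging the initial hypothesis produces $\mathsf{IA}^*$.

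I do not expect a real obstacle here: the only mildly subtle point is the choice to internalise ``$|s|=n$'' inside $\psi(n)$, which is exactly what makes the numerical induction loop work without having to induct directly over the sequence constructors $\langle\rangle_\sigma$ and $C$. Everything else is routine use of the sequence axiom $\mathsf{SA}$ (already absorbed into Lemma~\ref{lem:seq_length}) and ordinary arithmetical induction available in E-HA$^{\omega*}$.
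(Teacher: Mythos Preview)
Your proposal is correct and follows essentially the same approach as the paper: both reduce sequence induction to ordinary numerical induction on $|s|$ via the auxiliary formula $\psi(n) \equiv \forall s:\sigma^*\,(|s|=n \to \varphi(s))$, using Lemma~\ref{lem:seq_length} for the base and step cases. The paper's argument is slightly terser but structurally identical.
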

\begin{proof}
Suppose $\varphi(\langle\rangle_\sigma)$ and $\forall x:\sigma\,\forall s:\sigma^* \,(\varphi(s) \to \varphi(Cxs) )$. By the previous lemma,
\begin{equation*}
	\forall s:\sigma^* \,(|s| = 0 \to \varphi(s))\;.
\end{equation*}
Fix $n:0$, and assume $\forall s:\sigma^* \,(|s| = n \to \varphi(s))$. Let $s$ be of length $\mathrm{S}n$. Again by the previous lemma, $s = Cxt$ for some $x:\sigma$, and $t: \sigma^*$ of length $n$, and $\varphi(t)$ holds by hypothesis. Therefore, $\varphi(Cxt) \equiv \varphi(s)$ holds as well; and we have proved
\begin{equation*}
	\forall s:\sigma^* \,(|s| = n \to \varphi(s)) \to \forall s:\sigma^* \,(|s| = \mathrm{S}n \to \varphi(s))\;.
\end{equation*}
By ordinary induction, it follows that $\forall n:0, s:\sigma^* \,(|s| = n \to \varphi(s))$.
\end{proof}

\begin{dfn} Let $s, t: \sigma^*$. We say that $s$ and $t$ are \emph{extensionally equal}, and write $s =_e t$, if
\begin{equation*}
	|s| = |t| \land \forall i < |s| \, (s_i = t_i)\;.
\end{equation*}
\end{dfn}
\begin{cor} \label{cor:seq_extensionality}
	\emph{E-HA}$^{\omega*} \vdash \forall s, t: \sigma^*\, ( s =_e t \to s = t )\;.$
\end{cor}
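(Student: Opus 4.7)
The plan is to prove the corollary by induction on $s$ using the schema $\mathsf{IA}^*$ from Proposition \ref{prop:ia_star}, with induction predicate
\[
    \varphi(s) \; :\equiv \; \forall t: \sigma^*\,(s =_e t \to s = t)\,.
\]

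For the base case $\varphi(\langle\rangle_\sigma)$, suppose $\langle\rangle_\sigma =_e t$. Then $|t| = |\langle\rangle_\sigma| = 0$, so Lemma \ref{lem:seq_length}(a) gives $t = \langle\rangle_\sigma$ immediately. For the inductive step, assume $\varphi(s')$ and suppose $Cxs' =_e t$ for some $x:\sigma$ and $s':\sigma^*$. Since $|t| = |Cxs'| = \mathrm{S}|s'|$, part (b) of Lemma \ref{lem:seq_length} yields $y:\sigma$ and $t':\sigma^*$ with $t = Cyt'$ and $|t'| = |s'|$. The heads agree because $x = (Cxs')_0 = t_0 = (Cyt')_0 = y$; and using the defining equations of the projection function, for every $i < |s'|$ we have $s'_i = (Cxs')_{\mathrm{S}i} = t_{\mathrm{S}i} = (Cyt')_{\mathrm{S}i} = t'_i$, so $s' =_e t'$. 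The induction hypothesis then gives $s' = t'$, whence $s = Cxs' = Cyt' = t$.

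The only subtle point is confirming that ordinary equality $=_{\sigma^*}$ is a congruence with respect to the constructor $C$ and that Lemma \ref{lem:seq_length}(b) really produces a decomposition whose head and tail one may read off via the projection — but both of these are immediate consequences of the defining axioms of $C$, $|\cdot|$ and $(\cdot)_i$ listed above, plus the extensionality axiom. I expect no genuine obstacle; the main bookkeeping is matching the projection indices correctly in the inductive step.
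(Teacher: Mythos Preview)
Your proof is correct and follows essentially the same route as the paper's: sequence induction via $\mathsf{IA}^*$, with Lemma~\ref{lem:seq_length} supplying the decomposition of $t$ in both the base and inductive steps, and the projection equations verifying $s' =_e t'$ before invoking the induction hypothesis. The only slip is cosmetic: in the final line of the inductive step you write ``$s = Cxs'$'', but $s$ has not been introduced there---you mean simply $Cxs' = Cyt' = t$.
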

\begin{proof}
By induction for sequences. Suppose $s =_e t$. If $s = \langle \rangle_\sigma$, then $|s| = |t| = 0$, so, by Lemma \ref{lem:seq_length}, $t = \langle \rangle_\sigma$.

Otherwise, $s = Cxs'$ for some $x, s'$. Then $|s| = |t| = \mathrm{S}n$ for $n = |s'|$; again, by Lemma \ref{lem:seq_length}, $t = Cyt'$ for some $y, t'$. But $x = s_0 = t_0 = y$, and $s' =_e t'$; by the inductive hypothesis, $s' = t'$. Therefore, $s = Cxs' = Cyt' = t$.
\end{proof}

Since finite sequences will be used as a replacement for finite sets, we will borrow some set-theoretic notation.
\begin{dfn}
Let $a:\sigma$, $s, s':\sigma^*$. We define the abbreviations
\begin{enumerate}[label=(\roman*)]
	\item $a \in_\sigma s := \exists i<|s| \, (a =_\sigma s_i)$ ($a$ is an \emph{element} of $s$);
	\item $s' \subseteq_\sigma s := \forall x:\sigma \,(x \in_\sigma s' \to x \in_\sigma s)$ ($s'$ is \emph{contained} in $s$).
\end{enumerate}
We will drop subscripts in most occasions. We also extend the relation $\subseteq_\sigma$ to sequence-valued functionals, pointwise: for $s', s: \tau \to \sigma^*$,
\begin{enumerate}
	\item[(iii)] $s' \subseteq s := \forall x: \tau \, (s'x \subseteq_\sigma sx)\;.$
\end{enumerate}
The relation $\subseteq$ determines a preorder, provably in E-HA$^{\omega*}$.
\end{dfn}

In the definition of the nonstandard Dialectica translation, one needs a form of application for finite sequences - and an associated form of $\lambda$-abstraction - that is \emph{monotone} in the first component, with respect to the preorder we just defined.

\begin{dfn}[Finite sequence application and abstraction]
Let $s:(\sigma \to \tau^*)^*$, $a: \sigma$, $t: \tau^*$. Then
\begin{align*}
	s[a] & := (s_0a)\cdot \ldots \cdot(s_{|s|-1}a) : \tau^*\;, \\
	\Lambda x:\sigma.t & := C(\lambda x:\sigma.t)\langle\rangle : (\sigma \to \tau^*)^*\;.
\end{align*}
\end{dfn}
The new application and abstraction are interdefinable with the usual ones. In fact, we have the following, easy compatibility result.
\begin{prop}
\emph{E-HA}$^{\omega*}$ proves that for all $s:\tau^*$, $a: \sigma$,
\begin{equation*}
	(\Lambda x:\sigma.s)[a] = (\lambda x:\sigma.s)a = s[a/x]\;.
\end{equation*}
\end{prop}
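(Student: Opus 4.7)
The statement is essentially a matter of unfolding definitions, and I would treat it as a straightforward computation in the equational fragment of E-HA$^{\omega*}$. The strategy is to reduce both equalities to the defining $\beta$-conversion of $\lambda$-abstraction (as encoded via combinators in E-HA$^\omega_0$), which is provable in the ambient system.

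First, I would unfold the definition of $\Lambda$: we have $\Lambda x:\sigma.s = C(\lambda x:\sigma.s)\langle\rangle$, which is a sequence of length $1$ whose single entry is $\lambda x:\sigma.s$. A short calculation using the defining equations of $|\cdot|$ and of the projection $(\cdot)_0$ gives $|\Lambda x:\sigma.s| = 1$ and $(\Lambda x:\sigma.s)_0 = \lambda x:\sigma.s$.

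Next, I would apply the definition of $[a]$ to this length-$1$ sequence. The iterated concatenation $(s_0a)\cdot\ldots\cdot(s_{|s|-1}a)$ collapses, for $|s|=1$, to the single term $(\Lambda x:\sigma.s)_0 a$. Substituting the value computed in the previous step gives $(\Lambda x:\sigma.s)[a] = (\lambda x:\sigma.s)a$, which is the first equality. The second equality $(\lambda x:\sigma.s)a = s[a/x]$ is simply the defining $\beta$-conversion rule for $\lambda$-abstraction in E-HA$^{\omega*}$.

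I do not expect any real obstacle here. The only subtlety worth noticing is that the informal notation $(s_0a)\cdot\ldots\cdot(s_{|s|-1}a)$ has to be read so that the single-element case $|s|=1$ produces just $s_0 a$ with no stray empty sequence appended; this is the natural reading, and in any case $\langle\rangle_\tau$ is a (two-sided) unit for concatenation, so the outcome is the same either way.
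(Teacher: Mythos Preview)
Your proposal is correct and is exactly the natural unfolding of definitions; the paper in fact states this proposition without proof, treating it as an immediate computation, so your argument is precisely what is implicitly expected.
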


\begin{lem} \label{lem:seq_monotonicity}
\emph{E-HA}$^{\omega*}$ proves that for all $s,s':(\sigma \to \tau^*)^*$, $a: \sigma$,
\begin{equation*}
	s \subseteq s' \to s[a] \subseteq s'[a]\;.
\end{equation*}
\end{lem}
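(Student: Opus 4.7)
The plan is to prove the statement by induction on $s$ via the sequence induction principle $\mathsf{IA}^*$ from Proposition \ref{prop:ia_star}, after first unfolding the informal notation $s[a] := (s_0 a) \cdot \ldots \cdot (s_{|s|-1} a)$ into a recursive characterization. Concretely, using the list recursor one sees that $\langle\rangle[a] = \langle\rangle$ and $(Cfs')[a] = fa \cdot s'[a]$, and these two equations suffice for the whole argument.

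Before the main induction, I would record a small auxiliary lemma on concatenation: for all $u, v : \tau^*$ and all $b : \tau$,
\[
b \in u \cdot v \;\longleftrightarrow\; b \in u \,\lor\, b \in v.
\]
This is proved by induction on $u$ using $\mathsf{IA}^*$, unfolding the defining equations of $\cdot$ and of the projection function. Iterating it shows that if $b \in fa$ for some term $fa$ that appears as one of the constituents of a concatenation $s'[a] = (s'_0 a)\cdot \ldots \cdot (s'_{|s'|-1}a)$, then $b \in s'[a]$; i.e., for every $j < |s'|$ one has $s'_j a \subseteq s'[a]$.

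For the main statement, fix $s'$ and $a$ and apply $\mathsf{IA}^*$ to $\varphi(s) := (s \subseteq s' \to s[a] \subseteq s'[a])$. The base case $s = \langle\rangle$ is immediate since $\langle\rangle[a] = \langle\rangle$ and the empty sequence has no elements. For the inductive step, suppose $\varphi(s'')$ and consider $s = Cfs''$. From $Cfs'' \subseteq s'$ we get both $f \in s'$ (so $f = s'_j$ for some $j < |s'|$) and $s'' \subseteq s'$, hence by the inductive hypothesis $s''[a] \subseteq s'[a]$. Any $b \in (Cfs'')[a] = fa \cdot s''[a]$ lies, by the auxiliary lemma, either in $fa = s'_j a$ — and hence in $s'[a]$ by the remark above — or in $s''[a]$, and hence in $s'[a]$ by the inductive hypothesis.

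The main obstacle I anticipate is not the induction itself, which is routine, but making sure the auxiliary ``pointwise-in-a-concatenation'' fact is available: namely that belonging to some $s'_j a$ with $j<|s'|$ implies belonging to the full concatenation $s'[a]$. Stated bluntly this is obvious, but in the formal system it has to be extracted from the defining equations of $\cdot$ and of the projection function by induction on $s'$, and it is the only non-trivial reasoning step in the whole proof.
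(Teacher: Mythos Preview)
Your argument is correct: the recursive unfolding $\langle\rangle[a]=\langle\rangle$, $(Cfs'')[a]=fa\cdot s''[a]$, the auxiliary fact $b\in u\cdot v\leftrightarrow b\in u\lor b\in v$, and the $\mathsf{IA}^*$-induction on $s$ go through exactly as you describe. The paper itself does not prove this lemma but simply refers to \cite[Lemma~2.22]{van2012functional}, so there is nothing to compare against here; your direct proof is the natural one and would serve as a self-contained replacement for that citation.
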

\begin{proof}
See \cite[Lemma 2.22]{van2012functional}.
\end{proof}

Since we do not have product types, we will often work with \emph{tuples} of types and of terms, for which we follow the conventions of \cite{kohlenbach2008applied}; the following is a brief summary.
\begin{itemize}
	\item[]\textbf{Notation.} We write $\tuple{\sigma} := \sigma_1,\ldots,\sigma_n$, $\tuple{x}:\tuple{\sigma} := x_0:\sigma_0,\ldots,x_n:\sigma_n$ for tuples of types and terms. $[\,]$ stands for the empty tuple. We write
	\begin{equation*}
		f\tuple{x} := (\ldots(fx_0)x_1)\ldots)x_n\;,
	\end{equation*}
	with the appropriate types; while, if $\tuple{f} := f_0,\ldots,f_m$, $\tuple{f}\tuple{x}$ stands for $f_0\tuple{x},\ldots,f_m\tuple{x}$. We will have, correspondingly,
	\begin{equation*}
		\lambda \tuple{x}.\tuple{f} := \lambda \tuple{x}.f_0, \ldots \lambda \tuple{x}.f_m\;,
	\end{equation*}
	and the same for finite sequence application.
	
	Relations distribute as expected: for instance, if $\tuple{y} := y_0,\ldots,y_n$, with the same length \emph{and} types as $\tuple{x}$,
	\begin{equation*}
		\tuple{x} =_{\tuple{\sigma}} \tuple{y} := \bigwedge_{i=0}^n x_i =_{\sigma_i} y_i\;;
	\end{equation*}
	and if $\tuple{s} := s_0:\sigma_0^*,\ldots,s_n:\sigma_n^*$ is a tuple of sequences,
	\begin{equation*}
		\tuple{x} \in_{\tuple{\sigma}} \tuple{s} := \bigwedge_{i=0}^n x_i \in_{\sigma_i} s_i\;.
	\end{equation*}
\end{itemize}

Most of the results we have listed so far are easily extended to tuples of terms; in particular, those concerning finite sequence application and abstraction.

We now lay the syntactic groundwork for doing nonstandard arithmetic in our system.

\begin{dfn}
The system E-HA$^{\omega*}_\mathrm{st}$ is an extension of E-HA$^{\omega*}$, whose language includes a (unary) predicate $\mathrm{st}_\sigma(x)$, $x:\sigma$, for all types $\sigma$ of $\mathbf{T}^*$; and the \emph{external quantifiers} $\forallst x: \sigma$, $\existst x:\sigma$.

\begin{itemize} \item[] \textbf{Notation.} Following Nelson, so-called \emph{internal} formulae - those in the language of E-HA$^{\omega*}$ - are always denoted with small Greek letters, and generic, \emph{external} formulae with capital Greek letters.
\end{itemize}

The following axioms are added to those of E-HA$^{\omega*}$:
\begin{enumerate}
	\item the defining axioms of the external quantifiers:
	\begin{align*}
		& \forallst x:\sigma \,\Phi(x) \leftrightarrow \forall x:\sigma \,(\mathrm{st}_\sigma(x) \to \Phi(x))\;, \\
		& \existst x:\sigma \,\Phi(x) \leftrightarrow \exists x:\sigma \,(\mathrm{st}_\sigma(x) \land \Phi(x))\;;
	\end{align*}
	
	\item axioms for the standardness predicate:
	\begin{equation*}\begin{matrix*}[l]
		& \mathrm{st}_\sigma(x) \land x =_\sigma y \to \mathrm{st}_\sigma(y) \;, \\
		& \mathrm{st}_\sigma(a) & \text{for all \emph{closed} } a:\sigma\;, \\
		& \mathrm{st}_{\sigma \to \tau}(f) \land \mathrm{st}_\sigma(x) \to \mathrm{st}_\tau(fx)\;;
	\end{matrix*}\end{equation*}
	
	\item the \emph{external induction} schema:
	\begin{equation*}
		\mathsf{IA}^\mathrm{st}: \quad \big(\Phi(0) \land \forallst x:0 \,(\Phi(x) \to \Phi(\mathrm{S}x) )\big) \to \forallst x:0 \, \Phi(x)\;.
	\end{equation*}
\end{enumerate}
Since it is part of E-HA$^{\omega*}$, the system E-HA$^{\omega*}_\mathrm{st}$ also contains, besides the external induction schema, an ``internal'' induction schema $\mathsf{IA}$, which is assumed to hold for internal formulae only.

\end{dfn}
So far, there is nothing inherently nonstandard about the system we have defined. In fact, one could interpret $\mathrm{st}_\sigma(x)$ as $x =_\sigma x$, and all the new axioms would be provable in E-HA$^{\omega*}$. This simple fact also implies that E-HA$^{\omega*}_\mathrm{st}$ is a conservative extension of E-HA$^{\omega*}$.

However, there are some simple results, of the kind we would expect from a ``standardness property'', that can already be proved.

\begin{prop}
For every formula $\Phi(x)$, \emph{E-HA}$^{\omega*}_\mathrm{st}$ proves
\begin{equation*}
	\Phi(x) \land x = y \to \Phi(y)\;.
\end{equation*}
\end{prop}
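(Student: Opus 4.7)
The plan is to proceed by induction on the structure of the formula $\Phi$, since the usual equality/congruence schema of E-HA$^{\omega*}$ is built in only for internal formulas, and we now have the additional unary predicate $\mathrm{st}_\sigma$ and external quantifiers to account for. The base cases are the atomic ones. For internal atomic formulas the statement is just an instance of the equality axioms of E-HA$^{\omega*}$. For an atomic formula of the form $\mathrm{st}_\tau(t)$, where $t=t(x)$ is a term in which $x$ may occur, I would first observe that from $x=_\sigma y$ and the congruence of equality with respect to the function symbols and combinators of the language (which is provable in E-HA$^{\omega*}$ by induction on term structure, using extensionality) one obtains $t(x) =_\tau t(y)$; then the explicit axiom $\mathrm{st}_\tau(u) \land u =_\tau v \to \mathrm{st}_\tau(v)$ finishes the case.

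For the inductive step, the propositional connectives $\land$, $\lor$, $\to$, $\bot$ are all immediate from the inductive hypotheses applied to their immediate subformulas. For an internal quantifier $\forall z:\rho\, \Psi(z,x)$ (with $z$ chosen distinct from $x$ and $y$), suppose $\forall z\, \Psi(z,x)$ and $x = y$; fix an arbitrary $z$, so $\Psi(z,x)$ holds, and by the inductive hypothesis applied to $\Psi(z,-)$ we obtain $\Psi(z,y)$, hence $\forall z\, \Psi(z,y)$. The case $\exists z$ is analogous.

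For the external quantifiers, it suffices to unfold them using their defining axioms
\begin{equation*}
    \forallst z:\rho\, \Psi(z) \leftrightarrow \forall z:\rho\, (\mathrm{st}_\rho(z) \to \Psi(z)), \qquad \existst z:\rho\, \Psi(z) \leftrightarrow \exists z:\rho\, (\mathrm{st}_\rho(z) \land \Psi(z)),
\end{equation*}
and then apply the cases already handled (noting that $\mathrm{st}_\rho(z)$ does not mention $x$).

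The only point requiring any care is the atomic case $\mathrm{st}_\tau(t(x))$, since this is what forces the use of the explicit congruence axiom for the standardness predicate rather than the built-in substitution for internal atomic formulas; everything else is a routine induction that does not interact with the standardness machinery.
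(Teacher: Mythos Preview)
Your proposal is correct and follows exactly the approach sketched in the paper: induction on the logical structure of $\Phi$, with the only nontrivial case being the atomic standardness predicate, handled via the axiom $\mathrm{st}_\sigma(x) \land x =_\sigma y \to \mathrm{st}_\sigma(y)$. You have simply spelled out in detail what the paper summarises in one line.
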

\begin{proof}
Easy induction on the logical structure of $\Phi$, utilising the fact that the standardness predicate is extensional.
\end{proof}

\begin{prop}
$\text{\emph{E-HA}}^{\omega*}_\mathrm{st} \vdash \forall n, m: 0 \, (\mathrm{st}_0(n) \land m \leq n \to \mathrm{st}_0(m))\;.$
\end{prop}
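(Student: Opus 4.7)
The plan is to apply the external induction schema $\mathsf{IA}^\mathrm{st}$ to the internal formula
\[
	\Phi(n) \;:=\; \forall m: 0 \,(m \leq n \to \mathrm{st}_0(m))\;.
\]
Observe first that the target statement $\forall n, m: 0 \, (\mathrm{st}_0(n) \land m \leq n \to \mathrm{st}_0(m))$ is equivalent, by the defining axiom of $\forallst$, to $\forallst n:0\, \Phi(n)$, which is the conclusion of external induction applied to $\Phi$. So it suffices to verify the base and step premises.

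For the base case $\Phi(0)$: if $m \leq 0$, then $m = 0$ (provable in E-HA$^{\omega*}$), and $\mathrm{st}_0(0)$ holds because $0$ is a closed term, so by extensionality of $\mathrm{st}$ we get $\mathrm{st}_0(m)$.

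For the step, fix a standard $n:0$ and assume $\Phi(n)$; let $m \leq \mathrm{S}n$. By decidability of $\leq$ on natural numbers, either $m \leq n$ — in which case $\mathrm{st}_0(m)$ by the induction hypothesis $\Phi(n)$ — or $m = \mathrm{S}n$. In the latter case, $\mathrm{S}$ is a closed constant, so $\mathrm{st}_{0\to 0}(\mathrm{S})$; combined with $\mathrm{st}_0(n)$ and the closure axiom $\mathrm{st}_{\sigma \to \tau}(f) \land \mathrm{st}_\sigma(x) \to \mathrm{st}_\tau(fx)$, we obtain $\mathrm{st}_0(\mathrm{S}n)$, and then $\mathrm{st}_0(m)$ by extensionality of the standardness predicate.

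There is no real obstacle; the only choice is to pick the induction formula so that the induction variable $n$ plays the role of an upper bound, because external induction hands us standardness of $n$ in the step, which is precisely what we need to invoke closure of $\mathrm{st}$ under application of $\mathrm{S}$.
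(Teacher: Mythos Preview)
Your proof is correct and follows exactly the approach indicated in the paper: apply external induction to the formula $\Phi(n) := \forall m:0\,(m \leq n \to \mathrm{st}_0(m))$. The paper leaves the verification of the base and step cases implicit, whereas you spell them out carefully; all those details are right.
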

\begin{proof}
Apply external induction to the formula $\Phi(n) := \forall m:0\, (m \leq n \to \mathrm{st}_0(m))\;$.
\end{proof}

Basically anything one can get from standard sequences is standard.
\begin{lem} \label{lem:st_seq}
\begin{enumerate}[label=(\alph*)]
	\item \emph{E-HA}$^{\omega*}_\mathrm{st} \vdash \forall s: \sigma^*\, ( \mathrm{st}(s) \to \mathrm{st}(|s|) )\;,$
	\item \emph{E-HA}$^{\omega*}_\mathrm{st} \vdash \forall s: \sigma^*\,( \mathrm{st}(s) \to \forall i<|s|\; \mathrm{st}(s_i) )\;,$
	\item \emph{E-HA}$^{\omega*}_\mathrm{st} \vdash \forall s: \sigma^*\, \forall x:\sigma\,( \mathrm{st}(s) \land x \in_\sigma s \to \mathrm{st}(x))\;,$
	\item \emph{E-HA}$^{\omega*}_\mathrm{st} \vdash \forall s,t: \sigma^*\,( \mathrm{st}(s) \land \mathrm{st}(t) \to \mathrm{st}(s\cdot t) )\;,$
	\item \emph{E-HA}$^{\omega*}_\mathrm{st} \vdash \forall f: 0 \to \sigma^*\, \forall n:0\,\big( \mathrm{st}(f) \land \mathrm{st}(n) \to \mathrm{st}(f0\cdot \ldots \cdot fn) \big)\;.$
\end{enumerate}
\end{lem}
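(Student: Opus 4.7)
My plan is to exploit, for every part, the three axioms for the standardness predicate: extensionality, standardness of closed terms, and closure under application. The operations involved ($|\cdot|$, projection, concatenation, and the iterated concatenation) are all given by closed terms of E-HA$^{\omega*}$ (either as basic constants or definable via the list recursor), hence standard; applying them to standard arguments yields standard results.

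Concretely, for (a) and (d) I would simply note that $|\cdot|:\sigma^* \to 0$ and $\cdot : \sigma^* \to \sigma^* \to \sigma^*$ are closed terms, hence standard, so closure under application gives $\mathrm{st}(|s|)$ from $\mathrm{st}(s)$, and $\mathrm{st}(s\cdot t)$ from $\mathrm{st}(s) \land \mathrm{st}(t)$. For (b), I would first invoke (a) to get $\mathrm{st}(|s|)$; combined with the preceding proposition ($\mathrm{st}_0(n) \land m \leq n \to \mathrm{st}_0(m)$), every $i < |s|$ is automatically standard. Since the projection functional $(s,i) \mapsto s_i$ is again a closed term, standardness of $s$ and $i$ gives $\mathrm{st}(s_i)$. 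Part (c) is immediate: $x \in_\sigma s$ unfolds to $x = s_i$ for some $i < |s|$, and by (b) together with the extensionality of the standardness predicate (the previous proposition on $\Phi(x) \land x = y \to \Phi(y)$), $\mathrm{st}(x)$ follows.

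For (e), the cleanest route is to observe that the operation $\Phi := \lambda f{:}0\to\sigma^*,\, n{:}0.\; f0 \cdot \ldots \cdot fn$ is itself definable as a closed term, using the list (or primitive) recursor --- essentially $\Phi(f, n)$ can be produced by recursion on $n$ out of $C$, $\cdot$, and $f$. Then $\Phi$ is standard, and closure under application to the standard inputs $f$ and $n$ yields $\mathrm{st}(\Phi(f, n))$. Alternatively, one can argue directly by external induction $\mathsf{IA}^{\mathrm{st}}$ on $n$, with base case $\mathrm{st}(f0)$ (since $f$ and $0$ are standard) and inductive step using (d) plus the fact that $f(\mathrm{S}n)$ is standard (because $\mathrm{S}$, $f$, $n$ all are).

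I do not expect any genuine obstacle: the whole lemma is a bookkeeping exercise in the three standardness axioms, and the only subtle point to flag is that in (b) one is asserting standardness of $s_i$ for \emph{every} $i < |s|$, not just for standard $i$; that step requires routing through (a) and the earlier downward-closure proposition, and is worth making explicit in the writeup.
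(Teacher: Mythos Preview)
Your proposal is correct and follows exactly the approach the paper intends: its one-line proof reads ``Everything follows from the standardness axioms, coupled with the fact that the list recursor is standard,'' and you have simply unpacked this. Your observation that (b) needs the downward-closure proposition to get standardness of \emph{all} $i<|s|$ (not just standard $i$) is a detail worth keeping explicit, as the paper leaves it implicit.
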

\begin{proof}
Everything follows from the standardness axioms, coupled with the fact that the list recursor is standard.
\end{proof}

A simple consequence of the lemma is that the operations of sequence application and abstraction, as defined in the previous section, preserve standardness.
\begin{cor}
\begin{enumerate}[label=(\alph*)]
	\item \emph{E-HA}$^{\omega*}_\mathrm{st} \vdash \forall s: (\sigma \to \tau^*)^* \, \forall x:\sigma\, \big( \mathrm{st}(s) \land \mathrm{st}(x) \to \mathrm{st}(s[x]) \big)\;,$
	\item \emph{E-HA}$^{\omega*}_\mathrm{st} \vdash \forall s: \tau^*\,( \mathrm{st}(s) \to \mathrm{st}(\Lambda x.s ) )\;.$
\end{enumerate}
\end{cor}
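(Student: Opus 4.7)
The plan for both parts is the same: exhibit the resulting term as the value of a standard closed term at standard arguments, and then close under the axiom that standardness is preserved by application.

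For (a), I would argue that the binary operation $(s,x) \mapsto s[x]$ is itself given by a closed term of E-HA$^{\omega*}$. Concretely, using the list recursor one builds a closed $F : (\sigma \to \tau^*)^* \to \sigma \to \tau^*$ satisfying $F\langle\rangle_{\sigma \to \tau^*}\,x = \langle\rangle_\tau$ and $F(Cfs')\,x = fx \cdot Fs'x$; unfolding the definition of $s[x]$ (and using Corollary \ref{cor:seq_extensionality} if needed for the base case) shows $Fsx = s[x]$ in E-HA$^{\omega*}$. Since $F$ is closed it is standard by the axioms, and two applications of the closure-under-application axiom give $\mathrm{st}(s[x])$ from $\mathrm{st}(s)$ and $\mathrm{st}(x)$. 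A more elementary variant, staying closer to the definition, is to take $f := \lambda i:0.\, s_i x$: the projection and application operations are closed standard terms, so $f$ is standard; by Lemma \ref{lem:st_seq}(a) the length $|s|$ is standard; and Lemma \ref{lem:st_seq}(e) then yields $\mathrm{st}(f0 \cdot \ldots \cdot f(|s|-1)) = \mathrm{st}(s[x])$, after splitting on $|s| = 0$ via Lemma \ref{lem:seq_length}(a).

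For (b), unfolding the definition gives $\Lambda x:\sigma.s = C(\lambda x:\sigma.s)\langle\rangle_\sigma$. Since $s$ is universally quantified and $x$ is bound by $\Lambda$, we may assume $x$ does not occur free in $s$, so $\lambda x:\sigma.s = K s$ where $K := \lambda y:\tau^*.\lambda x:\sigma.y$ is a closed, hence standard, term. Applied to standard $s$ it gives $\mathrm{st}(\lambda x:\sigma.s)$; the constructor $C$ and the empty sequence $\langle\rangle_\sigma$ are likewise standard closed constants, so two further applications of the closure-under-application axiom yield $\mathrm{st}(\Lambda x.s)$.

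The only real subtlety is in (b), where one must resist the temptation to invoke a nonexistent ``standardness under abstraction'' principle and instead present $\lambda x.s$ explicitly as a closed standard term applied to $s$. Once that manoeuvre is made, both clauses collapse to direct applications of the standardness axioms and Lemma \ref{lem:st_seq}.
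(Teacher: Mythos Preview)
Your proposal is correct and matches the paper's intent: the paper gives no explicit proof, stating only that the corollary is ``a simple consequence of the lemma'' (i.e.\ Lemma~\ref{lem:st_seq}) together with the standardness axioms, and your argument is a legitimate way to unpack that remark. Both variants you give for (a) work; the second (via $f := \lambda i.\, s_i x$ and Lemma~\ref{lem:st_seq}(e)) is closest in spirit to what the paper has in mind.

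One small point on (b): your phrasing ``we may assume $x$ does not occur free in $s$'' is slightly off, since in the statement $s$ is a \emph{variable} of type $\tau^*$, not an arbitrary term, so $x$ simply does not occur in it; there is nothing to assume. Also, the empty sequence in $\Lambda x.s = C(\lambda x.s)\langle\rangle$ has type $(\sigma \to \tau^*)^*$, not $\sigma^*$ as you wrote, but this is a harmless typo.
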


Finally, we prove that finite sequences of standard elements are standard; the converse is already a consequence of Lemma \ref{lem:st_seq}.(a)-(b).

\begin{lem} \emph{E-HA}$^{\omega*}_\mathrm{st}$ proves that
\begin{equation*}
	\forall s:\sigma^*\, \big(\mathrm{st}(|s|) \land \forall i < |s| \; \mathrm{st}(s_i) \to \mathrm{st}(s) \big)\;.
\end{equation*}
\end{lem}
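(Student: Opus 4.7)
The plan is to apply the external induction schema $\mathsf{IA}^\mathrm{st}$ to the formula
\[
  \Phi(n) \,:=\, \forall s:\sigma^* \, \big( |s| = n \land \forall i<n\; \mathrm{st}(s_i) \to \mathrm{st}(s) \big),
\]
which is external because of the occurrences of $\mathrm{st}$. Once $\forallst n:0\,\Phi(n)$ is established, the hypothesis $\mathrm{st}(|s|)$ lets us instantiate at $n = |s|$ and conclude $\mathrm{st}(s)$.

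For the base case $\Phi(0)$: if $|s| = 0$, Lemma \ref{lem:seq_length}(a) gives $s = \langle\rangle_\sigma$, which is a closed term and hence standard by the second standardness axiom.

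For the step case, assume $\Phi(n)$ and let $s:\sigma^*$ satisfy $|s| = \mathrm{S}n$ and $\forall i<\mathrm{S}n\,\mathrm{st}(s_i)$. By Lemma \ref{lem:seq_length}(b), $s = Cxt$ for some $x:\sigma$, $t:\sigma^*$ with $|t| = n$. The defining equations of the projection function give $x = (Cxt)_0 = s_0$, which is standard by hypothesis, and $t_i = (Cxt)_{\mathrm{S}i} = s_{\mathrm{S}i}$ for $i<n$, again standard by hypothesis. The inductive hypothesis $\Phi(n)$ then yields $\mathrm{st}(t)$. Since $C$ is a closed constant, it is standard; applying the third standardness axiom twice to $C$, $x$, and $t$ gives $\mathrm{st}(Cxt)$, which is $\mathrm{st}(s)$.

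There is no real obstacle here; the only point requiring a moment's care is choosing to do induction on the length rather than directly on $s$ (the sequence induction $\mathsf{IA}^*$ of Proposition \ref{prop:ia_star} is formulated for internal formulae, whereas $\Phi$ is external, so one must route the argument through $\mathsf{IA}^\mathrm{st}$ applied to a formula parametrised by the length). Once that reformulation is in place, the rest is just the standardness axioms together with Lemma \ref{lem:seq_length}.
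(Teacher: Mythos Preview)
Your proof is correct, but it takes a genuinely different route from the paper's. The paper argues as follows: given $s$ with standard length and standard components, form $s' := \langle s_0\rangle \cdot \ldots \cdot \langle s_{|s|-1}\rangle$; each $\langle s_i\rangle$ is standard, so by iterating Lemma~\ref{lem:st_seq}(d) the concatenation $s'$ is standard; then $s =_e s'$, and Corollary~\ref{cor:seq_extensionality} gives $s = s'$, hence $\mathrm{st}(s)$.

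By contrast, you run external induction on the length directly, decomposing $s = Cxt$ via Lemma~\ref{lem:seq_length}(b) and closing under the standardness axioms for the constant $C$. Your argument is more elementary in that it avoids the detour through extensional equality and Corollary~\ref{cor:seq_extensionality}; the paper's argument is more conceptual (build a manifestly standard copy of $s$ and then identify them), and it reuses machinery already set up in Lemma~\ref{lem:st_seq}. Your closing remark about why $\mathsf{IA}^*$ is unavailable here is also on point: the external sequence induction schema $\mathsf{IA}^{*\mathrm{st}}$ is only established \emph{after} this lemma in the paper, so one does have to go through $\mathsf{IA}^\mathrm{st}$ on the length.
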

\begin{proof}
Suppose $s:\sigma^*$ is finite, and that, for all $i < |s|$, $s_i$ is standard. By an iteration of Lemma \ref{lem:st_seq}.(d), $s' := s_0 \cdot \ldots \cdot s_{|s|-1}$ is also standard. Clearly, $s$ and $s'$ are extensionally equal; by Corollary \ref{cor:seq_extensionality}, $s = s'$. Thus, $s$ is standard.
\end{proof}

This, in turn, is used to prove an external induction schema for sequences.
\begin{prop}
\emph{E-HA}$^{\omega*}_\mathrm{st}$ proves the external induction schema for sequences
\begin{equation*}
	\mathsf{IA}^{*\mathrm{st}}: \quad \big(\Phi(\langle\rangle_\sigma) \land \forallst x:\sigma\,\forallst s:\sigma^* \,(\Phi(s) \to \Phi(Cxs) )\big) \to \forallst s:\sigma^* \, \Phi(s)\;.
\end{equation*}
\end{prop}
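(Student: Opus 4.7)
The strategy is to mimic the proof of Proposition \ref{prop:ia_star} ($\mathsf{IA}^*$), but with the standardness predicate added throughout, using the \emph{external} induction schema $\mathsf{IA}^{\mathrm{st}}$ in place of the ordinary induction on $n$. The ambient fact that will do the real work is the lemma just established, together with Lemma \ref{lem:st_seq}(a)-(b): a sequence is standard iff its length is standard and all its entries are standard.

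Concretely, assume the two hypotheses $\Phi(\langle\rangle_\sigma)$ and $\forallst x\,\forallst s\,(\Phi(s) \to \Phi(Cxs))$. I would first reformulate the goal $\forallst s\,\Phi(s)$ as
\begin{equation*}
\forallst n:0\,\forallst s:\sigma^*\,(|s|=n \to \Phi(s))\;,
\end{equation*}
noting that this suffices, since for any standard $s$, its length $|s|$ is standard by Lemma \ref{lem:st_seq}(a). Then I would prove this by external induction on $n$ (applying $\mathsf{IA}^{\mathrm{st}}$ to the formula $\Psi(n) := \forallst s:\sigma^*\,(|s|=n \to \Phi(s))$).

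The base case $n=0$ is immediate from Lemma \ref{lem:seq_length}(a): any $s$ of length $0$ is equal to $\langle\rangle_\sigma$, and $\Phi(\langle\rangle_\sigma)$ holds by assumption. For the inductive step, suppose $n$ is standard and $\Psi(n)$ holds; let $s$ be standard of length $\mathrm{S}n$. By Lemma \ref{lem:seq_length}(b), $s = Cxt$ for some $x:\sigma$ and $t:\sigma^*$ with $|t| = n$. Now I must check that both $x$ and $t$ are standard in order to invoke the inductive assumption and the external hypothesis on $\Phi$. For $x$: since $x = s_0$ and $s$ is standard, Lemma \ref{lem:st_seq}(b) gives $\mathrm{st}(x)$. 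For $t$: its length $n$ is standard by assumption, and each $t_i = s_{i+1}$ is standard by Lemma \ref{lem:st_seq}(b) applied to $s$; the lemma immediately preceding this proposition then yields $\mathrm{st}(t)$. Applying the inductive hypothesis $\Psi(n)$ to the standard $t$ of length $n$ gives $\Phi(t)$, and then the external hypothesis on $\Phi$, applied to the standard $x$ and $t$, yields $\Phi(Cxt) \equiv \Phi(s)$, completing the induction.

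The only nontrivial step is the standardness of the tail $t$, which is exactly where the previous lemma is needed; without it, one would be stuck, since the sequence axiom $\mathsf{SA}$ only decomposes $s$ internally and does not, on its own, furnish a standard decomposition.
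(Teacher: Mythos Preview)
Your proposal is correct and follows essentially the same approach as the paper: the paper's proof is a two-sentence sketch that says to deduce from the preceding lemma that if $s = Cxt$ is standard then $x$ and $t$ are standard, and then to argue exactly as in Proposition~\ref{prop:ia_star} using external induction in place of ordinary induction---which is precisely what you have spelled out in full detail.
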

\begin{proof}
From the previous lemma, one obtains that if $s = Cxt$ and $s$ is standard, then $x$ and $t$ are also standard. Then one argues precisely as in Proposition \ref{prop:ia_star}, applying external instead of ordinary induction.
\end{proof}

The linguistic blocks are in place for the definition of the nonstandard Dialectica interpretation.

\subsection{The $D_\mathrm{st}$ translation}
\begin{dfn} To every formula $\Phi(\tuple{a})$ of the language of E-HA$^{\omega*}_\mathrm{st}$, with free variables $\tuple{a}$, we associate inductively its \emph{nonstandard Dialectica} translation
\begin{equation*}
	\Phi(\tuple{a})^{D_\mathrm{st}} = \existst \tuple{s}\,\forallst \tuple{y} \, \varphi_{D_\mathrm{st}}(\tuple{s},\tuple{y},\tuple{a})\;,
\end{equation*}
where $\varphi_{D_\mathrm{st}}$ is internal, and all the variables in $\tuple{s}$ are of sequence type.
\begin{itemize}
	\item[$\triangleright$] $\varphi(\tuple{a})^{D_\mathrm{st}} := \varphi_{D_\mathrm{st}}(\tuple{a}) := \varphi(\tuple{a})$, for $\varphi$ internal atomic;
	\item[$\triangleright$] $\mathrm{st}_\sigma(x)^{D_\mathrm{st}} := \existst s:\sigma^* \, ( x \in s )\;$.
\end{itemize}
Let $\Phi(\tuple{a})^{D_\mathrm{st}} = \existst \tuple{s}\,\forallst \tuple{y} \, \varphi_{D_\mathrm{st}}(\tuple{s},\tuple{y},\tuple{a})$, $\Psi(\tuple{b})^{D_\mathrm{st}} = \existst \tuple{t}\,\forallst \tuple{v} \, \psi_{D_\mathrm{st}}(\tuple{t},\tuple{v},\tuple{b})$:
\begin{itemize}
	\item[$\triangleright$] $(\Phi(\tuple{a}) \land \Psi(\tuple{b}))^{D_\mathrm{st}} := \existst \tuple{s},\tuple{t}\,\forallst \tuple{y},\tuple{v}\,\big(\varphi_{D_\mathrm{st}}(\tuple{s},\tuple{y},\tuple{a}) \land \psi_{D_\mathrm{st}}(\tuple{t},\tuple{v},\tuple{b})\big)\;$;
	\item[$\triangleright$] $(\Phi(\tuple{a}) \lor \Psi(\tuple{b}))^{D_\mathrm{st}} := \existst \tuple{s},\tuple{t}\,\forallst \tuple{y},\tuple{v}\,\big(\varphi_{D_\mathrm{st}}(\tuple{s},\tuple{y},\tuple{a}) \lor \psi_{D_\mathrm{st}}(\tuple{t},\tuple{v},\tuple{b})\big)\;$;
	\item[$\triangleright$] $(\Phi(\tuple{a}) \to \Psi(\tuple{b}))^{D_\mathrm{st}} := \existst \tuple{T},\tuple{Y}\,\forallst \tuple{s},\tuple{v}\,\big(\forall \tuple{y} \in \tuple{Y}[\tuple{s},\tuple{v}] \, \varphi_{D_\mathrm{st}}(\tuple{s},\tuple{y},\tuple{a}) \to \psi_{D_\mathrm{st}}(\tuple{T}[\tuple{s}],\tuple{v},\tuple{b})\big)\;$;
	\item[$\triangleright$] $(\exists z \,\Phi(z,\tuple{a}))^{D_\mathrm{st}} := \existst \tuple{s} \, \forallst \tuple{t} \, \exists z \, \forall \tuple{y} \in \tuple{t} \, \varphi_{D_\mathrm{st}}(\tuple{x},\tuple{y},z,\tuple{a})\;$;
	\item[$\triangleright$] $(\forall z \,\Phi(z,\tuple{a}))^{D_\mathrm{st}} := \existst \tuple{s} \, \forallst \tuple{y} \, \forall z \, \varphi_{D_\mathrm{st}}(\tuple{s},\tuple{y},z,\tuple{a})\;$;
	\item[$\triangleright$] $(\existst z \,\Phi(z,\tuple{a}))^{D_\mathrm{st}} := \existst u,\tuple{s} \, \forallst \tuple{t} \, \exists z \in u \, \forall \tuple{y} \in \tuple{t} \, \varphi_{D_\mathrm{st}}(\tuple{s},\tuple{y},z,\tuple{a})\;$;
	\item[$\triangleright$] $(\forallst z \,\Phi(z,\tuple{a}))^{D_\mathrm{st}} := \existst \tuple{S} \, \forallst \tuple{y},z \, \varphi_{D_\mathrm{st}}(\tuple{S}[z],\tuple{y},z,\tuple{a})\;$.
\end{itemize}
\end{dfn}

The idea is that, in the $D_\mathrm{st}$ interpretation, realisers should be finite sequences of \emph{potential} realisers, of which at least one is an actual realiser. Hence, if $s$ is a valid realiser, then any $s'$ with $s \subseteq s'$ should work as well. That this is the case is guaranteed by the following proposition.

\begin{dfn}
A formula $\Phi(s)$ is \emph{upwards closed} in $s:\sigma^*$ if
\begin{equation*}
	\Phi(s) \land s \subseteq s' \to \Phi(s')\;.
\end{equation*}
\end{dfn}

\begin{prop}
Let $\Phi(\tuple{a})$ be a formula of \emph{E-HA}$^{\omega*}_\mathrm{st}$, $\Phi(\tuple{a})^{D_\mathrm{st}} = \existst \tuple{s}\,\forallst \tuple{y} \, \varphi(\tuple{s},\tuple{y},\tuple{a})$. Then $\text{\emph{E-HA}}^{\omega*}$ proves that $\varphi$ is upwards closed in $\tuple{s}$:
\begin{equation*}
	\text{\emph{E-HA}}^{\omega*} \vdash \varphi(\tuple{s},\tuple{y},\tuple{a}) \land \tuple{s} \subseteq \tuple{s}' \to \varphi(\tuple{s}',\tuple{y},\tuple{a})\;.
\end{equation*}
\end{prop}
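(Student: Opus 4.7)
The approach is a straightforward induction on the logical structure of $\Phi$, running through each clause of the $D_\mathrm{st}$ translation. The base cases are quick: when $\varphi$ is internal atomic, the realiser tuple $\tuple{s}$ is empty, so the claim is vacuous; for $\mathrm{st}_\sigma(x)$, the matrix is $x \in s$, and upwards closure in $s$ is immediate from the definition of $\subseteq_\sigma$.

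The propositional connectives $\land$ and $\lor$ are equally easy, since the realisers for the compound formula form a concatenated tuple $\tuple{s}, \tuple{t}$ ordered componentwise, and each conjunct or disjunct depends only on its own realisers; the two inductive hypotheses therefore combine without interaction. The quantifier cases $\exists z$, $\forall z$, and $\existst z$ are also mild: in the first two, the matrix is essentially that of $\Phi(z,\tuple{a})$, so the inductive hypothesis transfers directly; for $\existst z$, the extra realiser $u:\sigma^*$ bounds an internal $\exists z \in u$, and replacing $u$ with a $u' \supseteq u$ only enlarges the search space.

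The substantive cases are implication and the external universal quantifier, where realisers are finite sequences of functionals applied via $[\cdot]$, and this is precisely where Lemma \ref{lem:seq_monotonicity} is needed. For implication, suppose $(\tuple{T}, \tuple{Y}) \subseteq (\tuple{T}', \tuple{Y}')$ and the matrix holds at $(\tuple{T}, \tuple{Y})$. To verify it at $(\tuple{T}', \tuple{Y}')$, fix $\tuple{s}, \tuple{v}$ and assume $\forall \tuple{y} \in \tuple{Y}'[\tuple{s},\tuple{v}]\, \varphi_{D_\mathrm{st}}(\tuple{s},\tuple{y},\tuple{a})$. By Lemma \ref{lem:seq_monotonicity} applied to $\tuple{Y} \subseteq \tuple{Y}'$, we have $\tuple{Y}[\tuple{s},\tuple{v}] \subseteq \tuple{Y}'[\tuple{s},\tuple{v}]$, so the corresponding antecedent over $\tuple{Y}[\tuple{s},\tuple{v}]$ holds, and the hypothesis yields $\psi_{D_\mathrm{st}}(\tuple{T}[\tuple{s}],\tuple{v},\tuple{b})$. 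A second application of Lemma \ref{lem:seq_monotonicity} gives $\tuple{T}[\tuple{s}] \subseteq \tuple{T}'[\tuple{s}]$, and the inductive hypothesis on $\Psi$ upgrades this to $\psi_{D_\mathrm{st}}(\tuple{T}'[\tuple{s}],\tuple{v},\tuple{b})$. The $\forallst z$ case is analogous but simpler: one applies Lemma \ref{lem:seq_monotonicity} to $\tuple{S}[z]$ and invokes the inductive hypothesis for $\Phi(z,\tuple{a})$.

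The only place requiring care is the implication step, where one must juggle the contravariance of $\tuple{Y}[\tuple{s},\tuple{v}]$ (enlarging $\tuple{Y}$ \emph{strengthens} the antecedent of the internal implication) against the covariance of $\tuple{T}[\tuple{s}]$, using Lemma \ref{lem:seq_monotonicity} on both sides. Everything else is bookkeeping.
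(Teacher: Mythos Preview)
Your proof is correct and follows exactly the approach the paper takes: induction on the logical structure of $\Phi$, with Lemma~\ref{lem:seq_monotonicity} doing the work in the clauses for $\to$ and $\forallst z$. The paper's own proof is the one-line version of what you have written out in detail.
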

\begin{proof}
By induction on the logical structure of $\Phi(\tuple{a})$, using Lemma \ref{lem:seq_monotonicity} in the clauses for $\to$ and $\forallst z$.
\end{proof}

In \cite{van2012functional}, the nonstandard Dialectica interpretation was given a characterisation in terms of five principles. We provide here an alternative characterisation, which keeps the following three principles from the former.
\begin{enumerate}
	\item The \emph{herbrandised axiom of choice}:
	\begin{equation*}
		\mathsf{HAC}^\mathrm{st}: \quad \forallst x:\sigma \, \existst y:\tau \, \Phi(x,y) \to \existst (f:\sigma\to\tau^*)^* \, \forallst x:\sigma \, \exists y \in f[x] \, \Phi(x,y)\;.
	\end{equation*}
		
	\item The \emph{herbrandised independence of premise} principle:
	\begin{equation*}
		\mathsf{HIP}_\forallst: \quad (\forallst x:\sigma \, \varphi(x) \to \existst y:\tau \, \Psi(y)) \to \existst t:\tau^* \, (\forallst x:\sigma \, \varphi(x) \to \exists y \in t \, \Psi(y))\;.
	\end{equation*}
	
	\item The principle called \emph{non-classical realisation} in \cite{van2012functional} - which, as we will see in Section \ref{sec:uniform}, could also be called \emph{herbrandised nonstandard uniformity}:
	\begin{equation*}
		\mathsf{NCR}: \quad \forall y: \tau\, \existst x: \sigma \, \Phi(x,y) \to \existst s: \sigma^* \, \forall y: \tau\, \exists x \in s \, \Phi(x,y)\;.
	\end{equation*}
\end{enumerate}
In addition to these, the former characterisation had \emph{idealisation}
\begin{equation*}
	\mathsf{I}: \quad \forallst s:\sigma^* \, \exists y: \tau \, \forall x \in s \, \varphi(x,y) \to \exists y: \tau \, \forallst x : \sigma \, \varphi(x,y)\;,
\end{equation*}	
whose dual
\begin{equation*}
	\mathsf{R}: \quad \forall y: \tau\, \existst x: \sigma \, \varphi(x,y) \to \existst s: \sigma^* \, \forall y: \tau\, \exists x \in s \, \varphi(x,y)\;
\end{equation*}
is clearly a consequence of $\mathsf{NCR}$, and the \emph{herbrandised generalised Markov's principle}
\begin{equation*}
	\mathsf{HGMP}^\mathrm{st}: \quad (\forallst x:\sigma \, \varphi(x) \to \psi) \to \existst s: \sigma^* \, (\forall x \in s \, \varphi(x) \to \psi)\;.
\end{equation*}
We will replace them as follows.
\begin{dfn}
Let $s:\sigma^*$. We say that $s$ is a \emph{hyperfinite enumeration} of the type $\sigma$ if
\begin{equation*}
	\forallst x:\sigma \,(x \in s)\;.
\end{equation*}
We define, for all types $\sigma$, a predicate
\begin{equation*}
	\mathrm{hyper}(s) := \forallst x :\sigma \,(x \in s)\;,
\end{equation*}
as well as quantifiers ranging over hyperfinite enumerations, with defining axioms
\begin{align*}
		& \forallhyp s:\sigma^* \,\Phi(s) := \forall s:\sigma^* \,(\mathrm{hyper}_\sigma(s) \to \Phi(s))\;, \\
		& \existshyp s:\sigma^* \,\Phi(s) := \exists s:\sigma^* \,(\mathrm{hyper}_\sigma(s) \land \Phi(s))\;.
\end{align*}
\end{dfn}
The most basic nonstandard principles are, arguably, overspill and underspill in the type of natural numbers:
\begin{align*}
	\mathsf{OS}_0: & \quad \forallst n:0 \,\varphi(n) \to \exists n:0 \,(\neg\, \mathrm{st}(n) \land \varphi(n))\;, \\
	\mathsf{US}_0: & \quad \forall n:0 \,(\neg\, \mathrm{st}(n) \to \varphi(n)) \to \existst n:0 \,\varphi(n)\;.
\end{align*}
These principles are almost invariably used with formulae of the form $\forall k<n \, \varphi(k)$, stating that a certain property holds \emph{up to} a number $n$. From the assumption $\forallst n \, \forall k<n \, \varphi(k)$, which says that $\varphi$ holds up to \emph{any} standard natural number, $\mathsf{OS}_0$ allows one to derive that $\varphi$ holds up to some nonstandard (infinite) number $n$.

From $n$, one can obtain a hyperfinite enumeration $s := \langle 0, \ldots, n \rangle$ of the natural numbers, so that $\forall k<n \, \varphi(k) \leftrightarrow \forall k \in s \, \varphi(k)$; and, in a way, it is \emph{this} fact - that $n$ induces a hyperfinite enumeration - that is relevant to the argument, rather than $n$ being nonstandard. This suggests the following generalisation of overspill and underspill to all finite types.

We introduce the principle of \emph{sequence overspill}
\begin{equation*}
	\mathsf{OS}^*: \quad \forallst s:\sigma^* \,\varphi(s) \to \existshyp s:\sigma^* \,\varphi(s)\;,
\end{equation*}
and its dual, \emph{sequence underspill}
\begin{equation*}
	\mathsf{US}^*: \quad \forallhyp s:\sigma^* \, \varphi(s) \to \existst s:\sigma^* \,\varphi(s)\;.
\end{equation*}

\begin{prop}\label{prop:ios} \emph{E-HA}$^{\omega*}_\mathrm{st} \vdash \mathsf{I} \leftrightarrow \mathsf{OS}^*\;$.
\end{prop}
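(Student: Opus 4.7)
The plan is to prove the two directions more or less symmetrically, exploiting the fact that $\mathsf{OS}^*$ is essentially an uncurried form of $\mathsf{I}$: a single-variable sequence statement can always be repackaged with an inner $\exists y \forall x \in s$ to look like an instance of idealisation, and vice versa.

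For the direction $\mathsf{OS}^* \to \mathsf{I}$, I assume $\forallst s:\sigma^* \, \exists y: \tau \, \forall x \in s \, \varphi(x,y)$ and set $\psi(s) := \exists y: \tau \, \forall x \in s \, \varphi(x,y)$. This formula is internal (the leading quantifier $\exists y$ is internal), so $\forallst s \, \psi(s)$ is a legitimate premise for $\mathsf{OS}^*$. Applying the principle, I obtain some $s:\sigma^*$ with $\mathrm{hyper}(s)$ and $\psi(s)$; from $\psi(s)$ extract a witness $y$ satisfying $\forall x \in s \, \varphi(x,y)$, and combine with $\forallst x (x \in s)$ to conclude $\forallst x \, \varphi(x,y)$, hence $\exists y \, \forallst x \, \varphi(x,y)$.

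For the direction $\mathsf{I} \to \mathsf{OS}^*$, I assume $\forallst s: \sigma^* \, \varphi(s)$ and aim to produce $\existshyp s \, \varphi(s)$. I apply $\mathsf{I}$ with $\tau := \sigma^*$ and with the internal formula $\psi(x,y) := x \in y \land \varphi(y)$. The hypothesis of $\mathsf{I}$, namely $\forallst s \, \exists y : \sigma^* \, \forall x \in s \, (x \in y \land \varphi(y))$, is witnessed simply by taking $y := s$: we have $\varphi(s)$ by assumption, and $\forall x \in s \, (x \in s)$ trivially. Idealisation then yields $\exists y \, \forallst x \, (x \in y \land \varphi(y))$.

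The only slightly delicate step is to rewrite this last formula as $\existshyp y \, \varphi(y)$, i.e.\ $\exists y \, (\forallst x (x \in y) \land \varphi(y))$. Unfolding the defining axiom of $\forallst$, one has $\forall x \, (\mathrm{st}(x) \to x \in y) \land \forall x \, (\mathrm{st}(x) \to \varphi(y))$; the first conjunct is $\mathrm{hyper}(y)$, and the second is equivalent to $\varphi(y)$ because $\sigma$ has at least one closed, hence standard, inhabitant (e.g.\ $\emptyset_\sigma$), so $\existst x \, \top$ holds and one can instantiate. No other step should pose difficulty; the main ``move'' to highlight is really just the choice of $\psi(x,y)$ encoding membership together with the target property.
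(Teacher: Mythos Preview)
Your proof is correct. The direction $\mathsf{OS}^* \to \mathsf{I}$ is handled exactly as in the paper. For $\mathsf{I} \to \mathsf{OS}^*$, however, you take a genuinely different route.

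The paper applies $\mathsf{I}$ one type level up: it instantiates the schema at type $(\sigma^*)^*$, with formula $t' \subseteq s \land \varphi(s)$, and witnesses the premise by taking $s := t_0 \cdot \ldots \cdot t_{|t|-1}$ for a standard $t:(\sigma^*)^*$. This uses the concatenation operation and Lemma~\ref{lem:st_seq} to ensure the concatenated sequence is standard, and then needs the small side lemma $\forallst t:\sigma^*\,(t \subseteq s) \leftrightarrow \mathrm{hyper}(s)$ to finish.

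Your approach stays at the base type level: you instantiate $\mathsf{I}$ with $\tau := \sigma^*$ and $\psi(x,y) := x \in y \land \varphi(y)$, witnessing the premise by $y := s$ itself. This avoids the detour through $(\sigma^*)^*$, concatenation, and the $\subseteq$/$\mathrm{hyper}$ equivalence; the only extra step is extracting $\varphi(y)$ from $\forallst x\,\varphi(y)$, which you correctly handle using the standard inhabitant $\emptyset_\sigma$. Your argument is more direct and arguably cleaner; the paper's version, on the other hand, makes the role of the sequence structure (concatenation as directed union) more visible, which is perhaps closer in spirit to how idealisation is used in practice.
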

\begin{proof}
Assume $\mathsf{I}$, and suppose $\forallst s:\sigma^* \, \varphi(s)$. Let $t:(\sigma^*)^*$ be a standard sequence of sequences; then $s := t_0 \cdot \ldots \cdot t_{|t|-1}$ is again standard, so $\varphi(s)$ holds. Furthermore, by construction, for all $i < |t|$, $t_i \subseteq s$; in other words,
\begin{equation*}
	\forallst t : (\sigma^*)^* \, \exists s : \sigma^* \, \forall t' \in t \,(t' \subseteq s \land \varphi(s))\;.
\end{equation*}
By idealisation, we obtain
\begin{equation*}
	\exists s : \sigma^* \, \forallst t : \sigma^* \,(t \subseteq s \land \varphi(s))\;.
\end{equation*}
It remains to prove that $\forallst t: \sigma^* \, (t \subseteq s) \leftrightarrow \mathrm{hyper}(s)$, an easy consequence of Lemma \ref{lem:st_seq}.

Conversely, assume $\mathsf{OS}^*$, and suppose $\forallst s:\sigma^* \, \exists y: \tau \, \forall x \in s \, \varphi(x,y)$. By sequence overspill, it follows that
\begin{equation*}
	\exists y: \tau \, \existshyp s:\sigma^* \, \forall x \in s \, \varphi(x,y)\;,
\end{equation*}
which implies
\begin{equation*}
	\exists y: \tau \, \forallst x : \sigma^* \, \varphi(x,y)\;.
\end{equation*}
This concludes the proof. \end{proof}

Several consequences of $\mathsf{I}$ are listed in \cite{palmgren1998developments} and in \cite{van2012functional}, which, by the previous proposition, are also consequences of $\mathsf{OS}^*$. For us, it is particularly relevant that $\mathsf{OS}^*$ implies an external version of the \emph{lesser limited principle of omniscience}, a nonconstructive principle well-known in the area of reverse mathematics, see e.g.\ \cite{ishihara2006reverse}; namely,

\begin{equation*}
	\mathsf{LLPO}^\mathrm{st}: \quad \forallst x,y:\sigma \,(\varphi(x) \lor \psi(y)) \to (\forallst x:\sigma \, \varphi(x) \,\lor\, \forallst x:\sigma \, \psi(x) )\;.
\end{equation*}	

\begin{prop} \label{prop:llpost} \emph{E-HA}$^{\omega*}_\mathrm{st} + \mathsf{OS}^* \vdash \mathsf{LLPO}^\mathrm{st} \;$.
\end{prop}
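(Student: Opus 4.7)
The plan is to apply $\mathsf{OS}^*$ to an internal relativisation of the hypothesis, and then extract the disjunction by distributing $\vee$ through a bounded $\forall$.

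Assume $\forallst x, y : \sigma \, (\varphi(x) \vee \psi(y))$, and consider the internal formula
\[
\theta(s) \; := \; \forall x \in s \, \forall y \in s \, (\varphi(x) \vee \psi(y)), \qquad s : \sigma^{*}.
\]
For any standard $s$, Lemma \ref{lem:st_seq}.(c) says that all elements of $s$ are standard, so $\theta(s)$ follows from the hypothesis; hence $\forallst s : \sigma^{*} \, \theta(s)$. By $\mathsf{OS}^{*}$ there is a hyperfinite $s$ with $\theta(s)$.

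Next, I would invoke the schema
\[
\forall i < n \, (A(i) \vee C) \; \to \; (\forall i < n \, A(i)) \vee C,
\]
where $C$ does not depend on $i$ (equivalently, over a sequence, $\forall x \in s \, (A(x) \vee C) \to (\forall x \in s \, A(x)) \vee C$). This is provable in \emph{E-HA}$^{\omega*}$ by ordinary induction on $n$: the base case is trivial, and the inductive step is a straightforward case split, using only the intuitionistic distributivity of $\wedge$ over $\vee$. Applying the schema inside $\theta(s)$ with $C := \varphi(x)$ for each fixed $x \in s$ yields $\forall x \in s \, (\varphi(x) \vee \forall y \in s \, \psi(y))$; a second application with $C := \forall y \in s \, \psi(y)$ (which does not depend on $x$) gives $(\forall x \in s \, \varphi(x)) \vee (\forall y \in s \, \psi(y))$.

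Finally, hyperfiniteness of $s$ reads $\forallst x : \sigma \, (x \in s)$, so each disjunct upgrades to its external counterpart: $\forall x \in s \, \varphi(x)$ implies $\forallst x : \sigma \, \varphi(x)$, and similarly for $\psi$. This gives the conclusion $\forallst x : \sigma \, \varphi(x) \vee \forallst x : \sigma \, \psi(x)$ of $\mathsf{LLPO}^{\mathrm{st}}$.

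The delicate step is the distributivity, which at first glance looks like a form of classical reasoning of the sort that $\mathsf{LLPO}^{\mathrm{st}}$ itself is meant to license externally. It is rescued by the fact that the $\forall$ being distributed is \emph{bounded} by a finite sequence, so the argument reduces to distributivity of finitely many conjunctions over $\vee$, which is intuitionistically valid.
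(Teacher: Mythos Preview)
Your argument is correct, and it reaches the same intermediate target as the paper --- a hyperfinite $s$ with $(\forall x \in s\,\varphi(x)) \lor (\forall y \in s\,\psi(y))$ --- but the two proofs swap the order of overspill and distribution. The paper first proves $\forallst s\,(\forall x \in s\,\varphi(x) \lor \forall x \in s\,\psi(x))$ by \emph{external} sequence induction (the inductive step needs the external hypothesis $\forallst x,y\,(\varphi(x)\lor\psi(y))$, so internal induction would not suffice there), and only then applies $\mathsf{OS}^*$. You instead overspill the undistributed premise $\theta(s)$ first, and then perform the distribution on the resulting hyperfinite $s$ using the purely internal schema $\forall i<n\,(A(i)\lor C)\to (\forall i<n\,A(i))\lor C$, which is available by ordinary $\mathsf{IA}$ since $\varphi,\psi$ are internal. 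The upshot of your ordering is that the combinatorial core becomes a reusable internal lemma, cleanly separated from the external hypothesis; the paper's ordering avoids introducing the auxiliary $\theta$ and needs only one application of overspill to the already-distributed formula. Either way the ``delicate step'' you flag is indeed harmless: the bounded quantifier makes the distributivity a finite iterated $\land$-over-$\lor$, intuitionistically valid.
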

\begin{proof}
Suppose $\forallst x,y:\sigma \,(\varphi(x) \lor \psi(y))$. We prove by external sequence induction that
\begin{equation} \label{eq:pre_overspill}
	\forallst s: \sigma^* \, (\forall x \in s \, \varphi(x) \lor \forall x \in s \, \psi(x))\;.
\end{equation}
For $s = \langle \rangle_\sigma$, $\forall x \in s \, \varphi(x) \lor \forall x \in s \, \psi(x)$ is vacuously true. Suppose it is true for some arbitrary, standard $s$, and pick any standard $a:\sigma$. We want to show $\forall x \in Cas \, \varphi(x) \lor \forall x \in Cas \, \psi(x)$.

Suppose $\forall x \in s \, \varphi(x)$ (the case where $\forall x \in s \, \psi(x)$ is true is similar). Since
\begin{equation*}
	\forallst x,y:\sigma \,(\varphi(x) \lor \psi(y))\;,
\end{equation*}
we have
\begin{equation*}
	\forall b \in Cas \,(\varphi(a) \lor \psi(b))\; ;
\end{equation*}
since $Cas$ is a finite sequence, we can run through all $b \in Cas$ and see whether $\varphi(a)$ holds. If so, then $\forall x \in Cas \, \varphi(x)$ holds and we are done; otherwise, we will get that $\psi(b)$ holds for all $b \in Cas$ and we again achieve the desired disjunction. Now, applying sequence overspill to (\ref{eq:pre_overspill}) gives
\begin{equation*}
	\existshyp s: \sigma^* \, (\forall x \in s \, \varphi(x) \lor \forall x \in s \, \psi(x))\;,
\end{equation*}
which implies $\mathsf{LLPO}^\mathrm{st}$.
\end{proof}

Notice that $\mathsf{OS}_0$ alone would have sufficed to prove the restriction of $\mathsf{LLPO}^\mathrm{st}$ to type 0.

Since $\mathsf{I}$ is equivalent to $\mathsf{OS}^*$, it would make sense if $\mathsf{R}$ were equivalent to $\mathsf{US}^*$; yet things are not so simple. In fact, only one implication seems to hold.

\begin{prop} \emph{E-HA}$^{\omega*}_\mathrm{st} + \mathsf{US}^* \vdash \mathsf{R} \;$.
\end{prop}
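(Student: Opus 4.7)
The plan is to use a hyperfinite enumeration as a uniform ``super-sequence'' containing every standard element, and then collapse the universal quantification over such enumerations down to an existential standard sequence via $\mathsf{US}^*$.

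First I would assume the hypothesis $\forall y:\tau \, \existst x:\sigma\, \varphi(x,y)$, and consider the internal formula
\begin{equation*}
    \psi(s) := \forall y:\tau\,\exists x \in s\, \varphi(x,y)\;,
\end{equation*}
with $s:\sigma^*$. The key step is to show $\forallhyp s:\sigma^*\, \psi(s)$. Let $s$ be an arbitrary hyperfinite enumeration, i.e.\ $\forallst x:\sigma\,(x \in s)$, and fix any $y:\tau$. The hypothesis gives a standard $x:\sigma$ with $\varphi(x,y)$; hyperfiniteness of $s$ then yields $x \in s$, so $\exists x \in s\, \varphi(x,y)$. Since $y$ was arbitrary, $\psi(s)$ holds.

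Having established $\forallhyp s:\sigma^*\, \psi(s)$, I would note that $\psi(s)$ is internal (it contains neither external quantifiers nor the standardness predicate), so $\mathsf{US}^*$ applies directly to give $\existst s:\sigma^*\,\psi(s)$, which is exactly the conclusion of $\mathsf{R}$.

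There is no real obstacle here; the argument is essentially the observation that a hyperfinite enumeration is a universal finite-sequence witness for all standard elements, so that any property that must be witnessed by some standard $x$ for each $y$ is automatically witnessed by \emph{the} hyperfinite $s$ for each $y$. The apparent asymmetry with Proposition \ref{prop:ios} — why we do not get the converse $\mathsf{R} \to \mathsf{US}^*$ — is precisely that, going backwards, one would need to recover a hyperfinite $s$ from the standard sequence produced by $\mathsf{R}$, and a standard $s$ need not contain all standard elements.
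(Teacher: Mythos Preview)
Your proof is correct and follows exactly the same route as the paper: from the hypothesis you derive $\forallhyp s:\sigma^*\,\forall y:\tau\,\exists x\in s\,\varphi(x,y)$ and then apply $\mathsf{US}^*$. The paper's proof is simply a terser version of yours, omitting the explicit verification that hyperfiniteness of $s$ plus standardness of the witness $x$ gives $x\in s$.
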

\begin{proof}
Suppose $\forall y: \tau\, \existst x: \sigma \, \varphi(x,y)$. Then
\begin{equation*}
	\forallhyp s: \sigma^* \, \forall y: \tau\, \exists x\in s \, \varphi(x,y)\;,
\end{equation*}
which, by sequence underspill, implies $\existst s: \sigma^* \, \forall y: \tau\, \exists x \in s \, \varphi(x,y)$.
\end{proof}

What is missing, in order to obtain an equivalence, is precisely the last characteristic principle.
\begin{prop}  \label{prop:hgmpst}\emph{E-HA}$^{\omega*}_\mathrm{st} + \mathsf{US}^* \vdash \mathsf{HGMP}^\mathrm{st} \;$.
\end{prop}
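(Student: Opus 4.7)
The plan is to assume the premise of $\mathsf{HGMP}^\mathrm{st}$, namely $\forallst x:\sigma \, \varphi(x) \to \psi$, and then show that the matrix $\forall x \in s \, \varphi(x) \to \psi$ holds for every hyperfinite enumeration $s:\sigma^*$; applying $\mathsf{US}^*$ will then yield a standard $s$ with the required property. Since $\varphi$ and $\psi$ are internal, the formula $\forall x \in s \, \varphi(x) \to \psi$ is internal in $s$, so $\mathsf{US}^*$ is applicable.

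In more detail, I would first fix an arbitrary $s:\sigma^*$ with $\mathrm{hyper}(s)$, that is, $\forallst x:\sigma\,(x \in s)$. Assuming $\forall x \in s \, \varphi(x)$, for any standard $x:\sigma$ one has $x \in s$, hence $\varphi(x)$; thus $\forallst x:\sigma\,\varphi(x)$, which by the assumed premise gives $\psi$. This establishes
\begin{equation*}
	\forallhyp s:\sigma^*\,\big(\forall x \in s \, \varphi(x) \to \psi\big)\;.
\end{equation*}

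Then I would invoke $\mathsf{US}^*$ on the internal formula $\forall x \in s \, \varphi(x) \to \psi$ to obtain
\begin{equation*}
	\existst s:\sigma^*\,\big(\forall x \in s \, \varphi(x) \to \psi\big)\;,
\end{equation*}
which is exactly the conclusion of $\mathsf{HGMP}^\mathrm{st}$. I do not expect any serious obstacle here: the proof is essentially the observation that a hyperfinite enumeration of $\sigma$ ``contains everything standard'', so universal quantification over its members subsumes standard universal quantification, and $\mathsf{US}^*$ then transports the resulting statement down to a standard witness.
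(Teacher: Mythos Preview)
Your proof is correct and is essentially the same as the paper's: both observe that $\forallst x\,\varphi(x)$ follows from $\forall x\in s\,\varphi(x)$ for any hyperfinite enumeration $s$, deduce $\forallhyp s\,(\forall x\in s\,\varphi(x)\to\psi)$, and then apply $\mathsf{US}^*$. The paper phrases the first step slightly differently, passing through the intuitionistic equivalence $(\existshyp s\,\forall x\in s\,\varphi(x))\to\psi \;\leftrightarrow\; \forallhyp s\,(\forall x\in s\,\varphi(x)\to\psi)$, but the content is identical.
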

\begin{proof}
Suppose $\forallst x:\sigma \, \varphi(x) \to \psi$. Then
\begin{equation*}
	\existshyp s:\sigma^* \, \forall x\in s \, \varphi(x) \to \psi\;,
\end{equation*}
which is intuitionistically equivalent to
\begin{equation*}
	\forallhyp s:\sigma^* \, (\forall x\in s \, \varphi(x) \to \psi)\;.
\end{equation*}
An application of sequence underspill leads to the conclusion.
\end{proof}

We now complete the characterisation of $\mathsf{US}^*$.

\begin{prop} \emph{E-HA}$^{\omega*}_\mathrm{st} + \mathsf{HGMP}^\mathrm{st} + \mathsf{R} \vdash \mathsf{US}^* \;$.
\end{prop}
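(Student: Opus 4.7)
The plan is to unfold the hypothesis, push a standard finite witness past the external universal quantifier using $\mathsf{HGMP}^\mathrm{st}$, swap the resulting $\forall\existst$ into $\existst\forall$ by $\mathsf{R}$, and finally collapse the resulting standard sequence of candidate ``upper bounds'' into a single standard sequence by concatenation.

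In detail: assume $\forallhyp s:\sigma^* \, \varphi(s)$, i.e.\
\[
	\forall s:\sigma^* \, \big(\forallst x:\sigma \, (x \in s) \to \varphi(s)\big).
\]
For each fixed $s$, the conclusion $\varphi(s)$ is internal, and the premise has the form $\forallst x\,(x\in s)$, so $\mathsf{HGMP}^\mathrm{st}$ applies (with $\varphi(x)$ taken to be the internal formula $x\in s$ and $\psi$ taken to be the internal formula $\varphi(s)$, treating $s$ as a parameter). This yields
\[
	\forall s:\sigma^* \, \existst t:\sigma^* \, \big(t \subseteq s \to \varphi(s)\big).
\]
Now apply $\mathsf{R}$ to this statement (with the internal formula $t\subseteq s \to \varphi(s)$, quantifying $s:\sigma^*$ externally and $t:\sigma^*$ as the standard existential) to get
\[
	\existst u:(\sigma^*)^* \, \forall s:\sigma^* \, \exists t \in u \, \big(t \subseteq s \to \varphi(s)\big).
\]

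Fix such a standard $u$. Set $t^* := u_0 \cdot u_1 \cdot \ldots \cdot u_{|u|-1}$; since $u$ is standard, so are $|u|$ and each $u_i$ by Lemma \ref{lem:st_seq}(a)-(b), and iterated concatenation keeps $t^*$ standard by Lemma \ref{lem:st_seq}(d) (or directly by applying (e) to the standard function $i \mapsto u_i$). By construction, $u_i \subseteq t^*$ for all $i<|u|$. Instantiating the $\forall s$ above with $s := t^*$, we obtain some $t \in u$ with $t \subseteq t^* \to \varphi(t^*)$; but $t = u_i$ for some $i<|u|$, whence $t \subseteq t^*$, and so $\varphi(t^*)$ holds. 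Since $t^*$ is standard, this gives $\existst s:\sigma^* \, \varphi(s)$, as required.

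The only subtlety worth flagging is the legitimacy of the two principle-applications: $\mathsf{HGMP}^\mathrm{st}$ requires an internal conclusion, which is satisfied here because $\varphi$ in the statement of $\mathsf{US}^*$ is internal; and $\mathsf{R}$ requires an internal matrix, which the formula $t\subseteq s \to \varphi(s)$ is. Beyond that, the proof is a routine exercise in standardness bookkeeping, with the concatenation trick playing the same role that it did in the proof of Proposition~\ref{prop:ios}.
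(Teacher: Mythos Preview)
Your proof is correct and follows essentially the same route as the paper: unfold $\mathrm{hyper}$, apply $\mathsf{HGMP}^\mathrm{st}$ to replace $\forallst x\,(x\in s)$ by a standard finite $t\subseteq s$, apply $\mathsf{R}$ to pull the standard existential outside, and then concatenate the resulting standard sequence of candidates into a single standard witness. The only cosmetic difference is that the paper inserts one extra intuitionistic step after $\mathsf{R}$, weakening $\exists t'\in u\,(t'\subseteq s\to\varphi(s))$ to $(\forall t'\in u\,(t'\subseteq s))\to\varphi(s)$ before instantiating, whereas you instantiate directly and use that the witnessing $t\in u$ satisfies $t\subseteq t^*$; both arguments are equivalent.
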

\begin{proof}
Suppose $\forallhyp s:\sigma^* \, \varphi(s)$; that is,
\begin{equation*}
	\forall s:\sigma^* \, (\forallst x : \sigma \, (x \in s) \to \varphi(s))\;.
\end{equation*}
By the herbrandised generalised Markov's principle, this is equivalent to
\begin{equation*}
	\forall s:\sigma^* \, \existst t: \sigma^* \, (t \subseteq s \to \varphi(s))\;;
\end{equation*}
which, by realisation and intuitionistic logic, implies
\begin{equation*} \label{eq:realisation}
	\existst t: (\sigma^*)^* \, \forall s: \sigma^* \, (\forall t' \in t \,(t' \subseteq s) \to \varphi(s))\;.
\end{equation*}

Take a standard $t: (\sigma^*)^*$ as in (\ref{eq:realisation}), and pick $s := t_0 \cdot \ldots \cdot t_{|t|-1}$. By Lemma \ref{lem:st_seq}, $s$ is standard, and, for all $t' \in t$, $t' \subseteq s$; therefore, it holds that $\varphi(s)$. We thus prove
\begin{equation*}
	\existst s:\sigma^* \, \varphi(s)\;,
\end{equation*}
and the sequence overspill principle.
\end{proof}

Replacing $\psi$ with a contradiction, e.g. $0 =_0 1$, and choosing a \emph{negated} $\varphi(x)$, we see that $\mathsf{HGMP}^\mathrm{st}$ - hence, $\mathsf{US}^*$ as well - implies an external version of \emph{Markov's principle}, another noted principle that is rejected by strict constructivism:
\begin{equation*}
	\mathsf{MP}^\mathrm{st}: \quad \big(\forallst x:\sigma\, (\varphi(x) \lor \neg\,\varphi(x)) \land \neg\,\neg\,\existst x:\sigma \, \varphi(x) \big) \to \existst x:\sigma \, \varphi(x)\;.
\end{equation*}
This is another instance of a principle whose nature appears markedly nonstandard, forcing a nonconstructive mode of reasoning.

\begin{thm}[Soundness of the nonstandard Dialectica interpretation]
Suppose
\begin{equation*}
	\text{\emph{E-HA}}^{\omega*}_{\mathrm{st}} + \mathsf{OS}^* + \mathsf{US}^* + \mathsf{NCR} + \mathsf{HAC}^\mathrm{st} + \mathsf{HIP}_\forallst + \Delta_\mathrm{int} \vdash \Phi(\tuple{a})\;,
\end{equation*}
where $\Delta_\mathrm{int}$ is a set of internal sentences. Let $\Phi(\tuple{a})^{D_\mathrm{st}} = \existst \tuple{s}\,\forallst \tuple{y} \, \varphi_{D_\mathrm{st}}(\tuple{s},\tuple{y},\tuple{a})$. Then from the proof we can extract a tuple of closed terms $\tuple{t}$ such that
\begin{equation*}
	\text{\emph{E-HA}}^{\omega*} + \Delta_\mathrm{int} \vdash \forall \tuple{y} \, \varphi_{D_\mathrm{st}}(\tuple{t}, \tuple{y}, \tuple{a})\;.
\end{equation*}
\end{thm}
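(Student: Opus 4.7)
The plan is to proceed by induction on the length of the derivation of $\Phi(\tuple{a})$ in the ambient system, just as for the soundness proofs of Gödel's original Dialectica and its Diller--Nahm variant. The induction hypothesis provides, for every previously derived formula, closed realising terms for the $\existst$-quantifiers of its $D_\mathrm{st}$-translation; the task at each step is to combine those terms into realisers for the conclusion, using only list recursion, finite-sequence concatenation, $\lambda$-abstraction and the monotone finite-sequence application defined earlier. Upwards closure (proved in the previous proposition via Lemma~\ref{lem:seq_monotonicity}) is the structural lemma that makes all of this compositional: whenever a case forces us to merge two realisers $\tuple{s}$ and $\tuple{s}'$, we take their componentwise concatenation and invoke upwards closure to deduce that the merged sequence still realises the matrix.

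I would organise the case analysis into four blocks. First, the purely logical axioms and rules of intuitionistic predicate logic: here the only delicate point is contraction $\Phi \to \Phi \wedge \Phi$, where upwards closure plus concatenation produce the required $\tuple{T}, \tuple{Y}$, and the rules for implication and the external universal quantifier, which use finite-sequence application $\tuple{T}[\tuple{s}]$ and abstraction $\Lambda z.\tuple{S}$, respectively. Second, the non-logical axioms of E-HA$^{\omega*}$: since internal atomic formulae are interpreted by themselves, every purely internal axiom (including the defining equations for $C$, $\mathrm{L}_{\sigma,\tau}$, the sequence axiom $\mathsf{SA}$, internal induction $\mathsf{IA}$, and extensionality) is its own realiser, and the same is true for the sentences in $\Delta_\mathrm{int}$. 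Third, the $\mathrm{st}$-axioms and external induction $\mathsf{IA}^\mathrm{st}$: the extensionality and closure axioms for $\mathrm{st}$ realise themselves after unfolding $\mathrm{st}_\sigma(x)^{D_\mathrm{st}} = \existst s \,(x \in s)$, with singletons $\langle x\rangle$ providing the witnesses, while $\mathsf{IA}^\mathrm{st}$ is interpreted by iterating the realiser of the step formula using the list recursor $\mathrm{L}_{\sigma,\tau}$ applied to the realiser of the base case.

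Fourth, and most important, the five characteristic principles. Each of them should admit essentially trivial realisers once its $D_\mathrm{st}$-translation is unfolded: $\mathsf{HAC}^\mathrm{st}$ is built into the interpretation of $\forallst x\,\existst y$, so its translation is realised by identity functionals; $\mathsf{HIP}_\forallst$ falls out similarly from how the clauses for $\to$ and $\existst$ interact; $\mathsf{NCR}$ matches the $D_\mathrm{st}$-clause for $\forall z\,\existst x$ by construction; and $\mathsf{OS}^*$, $\mathsf{US}^*$ are realised because the interpretation of $\forallst s\,\varphi(s) \to \existshyp s\,\varphi(s)$ (and of its dual) asks only for a finite sequence $\tuple{T}$ of potential sequence-witnesses, which, after concatenation, yields a hyperfinite enumeration by the same argument as in Proposition~\ref{prop:ios}. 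In each case one writes out the translation explicitly, identifies the tuple of closed terms required, and checks the internal matrix in E-HA$^{\omega*}$.

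The main obstacle I anticipate is the bookkeeping rather than any single conceptual difficulty: the clause for implication introduces the nested sequence applications $\tuple{Y}[\tuple{s},\tuple{v}]$ and $\tuple{T}[\tuple{s}]$, and combining these through modus ponens, contraction and the quantifier rules produces terms whose monotonicity in $\tuple{s}$ must be maintained at every step. The upwards closure proposition guarantees that monotonicity is preserved, but one has to invoke it consistently and to make sure the constructed realisers really are closed (all variables from $\tuple{a}$ appear as parameters, not as free variables of $\tuple{t}$). A secondary subtlety is the external induction rule, where one needs to iterate a realiser whose arity grows with the induction variable; this is handled by a careful application of $\mathrm{L}_{\sigma,\tau}$ together with Lemma~\ref{lem:st_seq} to keep the iterated term standard. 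Once these bookkeeping matters are discharged, the soundness statement follows by assembling the closed terms produced at the last step of the derivation.
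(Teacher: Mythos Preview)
Your proposal is correct and outlines the standard soundness argument for a Dialectica-style interpretation; carried through in full it would yield the theorem. The paper, however, takes a much more economical route: rather than redoing the entire induction on derivations, it simply invokes \cite[Theorem~5.5]{van2012functional}, where soundness was already proved for the system with $\mathsf{I} + \mathsf{R} + \mathsf{HGMP}^\mathrm{st}$ in place of $\mathsf{OS}^* + \mathsf{US}^*$, and then appeals to the equivalence $\mathsf{OS}^* + \mathsf{US}^* \leftrightarrow \mathsf{I} + \mathsf{R} + \mathsf{HGMP}^\mathrm{st}$ established earlier in the section. All that remains is to exhibit explicit realisers for the two new principles, and the paper does this directly: for $\mathsf{OS}^*$ one takes $S := \Lambda s'.\langle s' \rangle$, and for $\mathsf{US}^*$ one takes $T := \Lambda s''.(s''_0 \cdot \ldots \cdot s''_{|s''|-1})$, the concatenation of the components of $s''$.

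Your self-contained approach has the merit of not depending on the earlier paper and of making the mechanism fully visible, at the cost of repeating a substantial case analysis already in the literature. The paper's approach buys brevity by isolating exactly the delta---two short realiser computations---over the previously published soundness theorem. If you were to follow the paper's line, note that your somewhat informal description of the $\mathsf{OS}^*$ and $\mathsf{US}^*$ realisers (``after concatenation, yields a hyperfinite enumeration'') should be sharpened to the concrete terms above; in particular, the $\mathsf{US}^*$ realiser works because the premise instantiated at $s := s''_0 \cdot \ldots \cdot s''_{|s''|-1}$ already gives $\varphi$ of that concatenation.
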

\begin{proof}
This is \cite[Theorem 5.5]{van2012functional}, coupled with the fact that $\mathsf{OS}^* + \mathsf{US}^* \leftrightarrow \mathsf{I} + \mathsf{R} + \mathsf{HGMP}^\mathrm{st}$ over E-HA$^{\omega*}_\mathrm{st}$. We provide explicit realisers for the new principles.

The interpretation of $\mathsf{OS}^*$ is
\begin{equation*}
	\existst S \, \forallst s' \,\big(\forall s \in S[s'] \, \varphi(s) \to \exists s\,(s' \subseteq s \land \varphi(s))\big)\;,
\end{equation*}
and we can take $S := \Lambda s'.\langle s' \rangle$.

The interpretation of $\mathsf{US}^*$ is
\begin{equation*}
	\existst T \, \forallst s'' \, \big(\forall s \, \exists s' \in s'' \, (s' \subseteq s \to \varphi(s)) \to \exists t \in T[s''] \, \varphi(t) \big);
\end{equation*}
since $\forall s \, \exists s' \in s'' \, (s' \subseteq s \to \varphi(s))$ implies $\varphi(s''_0 \cdot \ldots \cdot s''_{|s''|-1})$, unless $s''$ is the empty sequence (in which case, the premise is false anyway), we can take
\begin{equation*}
	T := \Lambda s''.(s''_0 \cdot \ldots \cdot s''_{|s''|-1})\;. \qedhere
\end{equation*}\end{proof}

\begin{cor} \label{cor:conserv_n}
The system
\begin{equation*}
	\text{\emph{H}} := \text{\emph{E-HA}}^{\omega*}_{\mathrm{st}} + \mathsf{OS}^* + \mathsf{US}^* + \mathsf{NCR} + \mathsf{HAC}^\mathrm{st} + \mathsf{HIP}_\forallst
\end{equation*}
is a conservative extension of \emph{E-HA}$^{\omega*}$, hence of \emph{E-HA}$^{\omega}$.
\end{cor}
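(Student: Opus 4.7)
The plan is to derive the conservativity of $\mathrm{H}$ over $\mathrm{E\mbox{-}HA}^{\omega*}$ as a direct consequence of the soundness theorem just proved, then chain with the already-mentioned fact that $\mathrm{E\mbox{-}HA}^{\omega*}$ is a definitional extension of $\mathrm{E\mbox{-}HA}^\omega_0$ (and hence of $\mathrm{E\mbox{-}HA}^\omega$). The only preliminary work is to check that the $D_\mathrm{st}$-interpretation acts as the identity on internal formulas, so that soundness degenerates into ordinary provability.

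First I would verify, by a straightforward induction on the logical structure of an internal formula $\varphi(\tuple{a})$ of $\mathrm{E\mbox{-}HA}^{\omega*}$, that $\varphi^{D_\mathrm{st}} = \varphi$, with both tuples $\tuple{s}$ and $\tuple{y}$ in $\varphi^{D_\mathrm{st}} = \existst\tuple{s}\forallst\tuple{y}\,\varphi_{D_\mathrm{st}}(\tuple{s},\tuple{y},\tuple{a})$ empty. The atomic case is by definition; each of the clauses for $\land$, $\lor$, $\to$, $\exists z$ and $\forall z$ merely concatenates the $\tuple{s}$- and $\tuple{y}$-tuples of the subformulas and possibly prepends an internal quantifier, so when all subformulas already carry empty tuples the result is again the formula itself. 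The clauses for $\mathrm{st}$, $\existst$ and $\forallst$ never arise, since $\varphi$ is internal.

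Now suppose $\varphi$ is an internal sentence of $\mathrm{E\mbox{-}HA}^{\omega*}$ with $\mathrm{H}\vdash\varphi$. By the soundness theorem applied with $\Delta_\mathrm{int}=\emptyset$, there is a tuple of closed terms $\tuple{t}$ realising $\varphi^{D_\mathrm{st}}$ in $\mathrm{E\mbox{-}HA}^{\omega*}$; but by the preceding observation this tuple is empty and the statement reduces to $\mathrm{E\mbox{-}HA}^{\omega*}\vdash \varphi$. The same argument works for internal formulas with free variables, giving conservativity in the full sense. Finally, composing this with the definitional extension of $\mathrm{E\mbox{-}HA}^\omega_0$ (or $\mathrm{E\mbox{-}HA}^\omega$) into $\mathrm{E\mbox{-}HA}^{\omega*}$ recalled earlier in the section yields the second half of the statement.

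There is no real obstacle: the only content is the identity $\varphi^{D_\mathrm{st}}=\varphi$ for internal $\varphi$, which is a mechanical induction. The slight subtlety worth flagging is that the clauses for $\exists z$, $\forall z$ do introduce external quantifiers $\existst\tuple{s}\forallst\tuple{t}$ and $\existst\tuple{s}\forallst\tuple{y}$ in general; the induction goes through only because these quantify over \emph{empty} tuples when the matrix is already internal, and the bounded quantifier $\forall\tuple{y}\in\tuple{t}$ over an empty tuple is vacuously true, so the clauses collapse to $\exists z\,\varphi$ and $\forall z\,\varphi$ respectively.
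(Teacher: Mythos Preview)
Your proof is correct and follows exactly the paper's approach: the paper's own proof is the one-line ``Follows from the soundness theorem, noting that internal formulae are $D_\mathrm{st}$-interpreted as themselves,'' and you have simply spelled out the induction behind that parenthetical remark and the chaining with the definitional-extension fact about $\mathrm{E\mbox{-}HA}^{\omega*}$.
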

\begin{proof}
Follows from the soundness theorem, noting that internal formulae are $D_\mathrm{st}$-interpreted as themselves.
\end{proof}

\begin{thm}[Characterisation of nonstandard Dialectica]

Let $\Phi$ be a formula in the language of $\text{\emph{E-HA}}^{\omega*}_{\mathrm{st}}$.
\begin{enumerate}[label=(\alph*)]
	\item $\text{\emph{H}} \vdash \Phi \leftrightarrow \Phi^{D_\mathrm{st}}\;$.
	\item If for all formulae $\Psi$ of the language of $\text{\emph{E-HA}}^{\omega*}_{\mathrm{st}}$, with $\Psi^{D_\mathrm{st}} = \existst \tuple{s} \, \forallst \tuple{y} \, \psi(\tuple{s}, \tuple{y})$,
	\begin{equation*}
		\text{\emph{H}} + \Phi \vdash \Psi
	\end{equation*}
	implies that there exist closed terms $\tuple{t}$ such that
	\begin{equation*}
		\text{\emph{E-HA}}^{\omega*} \vdash \forall \tuple{y} \, \psi(\tuple{t},\tuple{y})\;
	\end{equation*}
	holds, then $\text{\emph{H}} \vdash \Phi\,$.
\end{enumerate}
\end{thm}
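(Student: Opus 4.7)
The plan is to prove (a) by induction on the structure of $\Phi$, then deduce (b) by instantiating the hypothesis with $\Psi := \Phi$.

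For (a), the base cases are direct: for internal atomic $\varphi$, $\varphi^{D_\mathrm{st}} \equiv \varphi$ and nothing needs to be shown; for $\mathrm{st}_\sigma(x)$, the equivalence $\mathrm{st}(x) \leftrightarrow \existst s : \sigma^* \, (x \in s)$ is witnessed by $s := \langle x \rangle$ in one direction and by Lemma \ref{lem:st_seq}(c) in the other. In the inductive step, the clauses for $\wedge$, $\vee$ and $\forall z$ are essentially intuitionistically pure; the clause for $\forallst z$ requires $\mathsf{HAC}^\mathrm{st}$ in order to convert $\forallst z\, \existst \tuple{s}\,\forallst \tuple{y}\, \varphi_{D_\mathrm{st}}$ into the Skolemised form $\existst \tuple{S}\, \forallst z, \tuple{y}\, \varphi_{D_\mathrm{st}}(\tuple{S}[z], \tuple{y}, z)$. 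The clause for $\exists z$ invokes idealisation $\mathsf{I}$ (available via $\mathsf{OS}^*$, Proposition \ref{prop:ios}) to extract, from $\forallst \tuple{t}\, \exists z\, \forall \tuple{y} \in \tuple{t}\, \varphi$, a uniform witness $z$ valid for all standard $\tuple{y}$. The clause for $\existst z$ relies on an iterated form of $\mathsf{LLPO}^\mathrm{st}$ (a consequence of $\mathsf{OS}^*$ by Proposition \ref{prop:llpost}) to pick a fixed witness from the standard finite bound $u$.

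The main obstacle is the implication clause, which must combine $\mathsf{HIP}_\forallst$, $\mathsf{HGMP}^\mathrm{st}$, and $\mathsf{HAC}^\mathrm{st}$. Starting from
\begin{equation*}
\forallst \tuple{s}\, \big(\forallst \tuple{y}\, \varphi_{D_\mathrm{st}}(\tuple{s},\tuple{y}) \to \existst \tuple{t}\, \forallst \tuple{v}\, \psi_{D_\mathrm{st}}(\tuple{t},\tuple{v})\big),
\end{equation*}
one first applies $\mathsf{HIP}_\forallst$ pointwise in $\tuple{s}$ to push $\existst \tuple{t}$ out past the premise, obtaining a Herbrandised bound on $\tuple{t}$. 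Next, $\mathsf{HGMP}^\mathrm{st}$ (Proposition \ref{prop:hgmpst}, derived from $\mathsf{US}^*$) replaces each subformula of the shape $\forallst \tuple{y}\,\varphi \to \ldots$ by $\existst \tuple{Y}\,(\forall \tuple{y} \in \tuple{Y}\, \varphi \to \ldots)$, where the upward closure of $\varphi$ in $\tuple{Y}$ keeps the matrix well-behaved under enlargement. Finally, $\mathsf{HAC}^\mathrm{st}$ (applied twice if needed, together with concatenation of sequence-valued outputs) converts the remaining $\forallst \ldots \existst$ alternations into the sequence-valued functionals $\tuple{T}[\tuple{s}]$ and $\tuple{Y}[\tuple{s},\tuple{v}]$, yielding the prescribed form. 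The converse direction of each inductive clause is cheap, following from the defining axioms of the external quantifiers together with the fact (Lemma \ref{lem:st_seq}) that elements and length of a standard sequence are standard.

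For (b), apply the hypothesis with $\Psi := \Phi$: since $\text{H} + \Phi \vdash \Phi$ is trivial, we obtain closed terms $\tuple{t}$ such that $\text{E-HA}^{\omega*} \vdash \forall \tuple{y}\, \varphi_{D_\mathrm{st}}(\tuple{t},\tuple{y},\tuple{a})$, where $\Phi(\tuple{a})^{D_\mathrm{st}} = \existst \tuple{s}\,\forallst \tuple{y}\, \varphi_{D_\mathrm{st}}(\tuple{s},\tuple{y},\tuple{a})$. Since each $t_i$ is closed, it is standard by the standardness axiom for constants, so $\existst \tuple{s}\, \forallst \tuple{y}\, \varphi_{D_\mathrm{st}}(\tuple{s},\tuple{y},\tuple{a})$, i.e.\ $\Phi^{D_\mathrm{st}}$, is already provable in $\text{H}$. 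Combining with part (a), we conclude $\text{H} \vdash \Phi$.
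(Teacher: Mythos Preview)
Your overall architecture --- induction on $\Phi$ for (a), then instantiating $\Psi := \Phi$ for (b) --- is the right one, and your handling of (b) is correct. However, there is a genuine gap in your inductive step for (a).

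You claim the clause for the internal universal quantifier $\forall z$ is ``essentially intuitionistically pure''. It is not. By the induction hypothesis you have
\[
\forall z\, \existst \tuple{s}\,\forallst \tuple{y}\,\varphi_{D_\mathrm{st}}(\tuple{s},\tuple{y},z),
\]
and you must reach
\[
\existst \tuple{s}\,\forallst \tuple{y}\,\forall z\,\varphi_{D_\mathrm{st}}(\tuple{s},\tuple{y},z).
\]
Pulling $\existst \tuple{s}$ out through an unrestricted $\forall z$ is exactly the job of $\mathsf{NCR}$: it yields a standard sequence $\tuple{s}'$ such that $\forall z\,\exists \tuple{s}\in \tuple{s}'\,\forallst \tuple{y}\,\varphi_{D_\mathrm{st}}$, after which upward closure of $\varphi_{D_\mathrm{st}}$ in $\tuple{s}$ lets you collapse to a single realiser. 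You never invoke $\mathsf{NCR}$ anywhere, yet it is one of the five characteristic principles defining $\mathrm{H}$; without it the equivalence cannot be closed. (Compare the analogous step in the proof of Theorem~\ref{thm:char_udn}, where the de-herbrandised principle $\mathsf{NU}$ is used at exactly this point.)

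A smaller version of the same oversight affects your $\lor$ clause: going from $\existst \tuple{s},\tuple{t}\,\forallst \tuple{y},\tuple{v}\,(\varphi_{D_\mathrm{st}}\lor\psi_{D_\mathrm{st}})$ back to a disjunction of two $\existst\forallst$-formulae requires $\mathsf{LLPO}^\mathrm{st}$ (hence $\mathsf{OS}^*$), not pure intuitionistic logic. Finally, your use of iterated $\mathsf{LLPO}^\mathrm{st}$ for the $\existst z$ clause is workable but circuitous; applying $\mathsf{I}$ directly (to the internal formula $z\in u \land \varphi_{D_\mathrm{st}}$) gives $\exists z\in u\,\forallst \tuple{y}\,\varphi_{D_\mathrm{st}}$ in one step, and standardness of $z$ then follows from standardness of $u$.
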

\begin{proof}
See \cite[Theorem 5.8]{van2012functional}.
\end{proof}

Again, we refer to \cite{van2012functional} for proofs of other consequences of the soundness and characterisation theorems, including the closure of H under the \emph{transfer rules}
\begin{align*}
 	& \mathsf{TR}_\forall: \quad \begin{array}{c} \forallst x:\sigma \, \varphi(x) \\ \hline \forall x:\sigma \, \varphi(x)  \end{array}\;, \\
 	& \mathsf{TR}_\exists: \quad \begin{array}{c} \exists x:\sigma \, \varphi(x) \\ \hline \existst x:\sigma \, \varphi(x)  \end{array}\;.
\end{align*}

In summary, the $D_\mathrm{st}$ interpretation is characterised by two reasonable nonstandard principles, and three principles which share the attribute \emph{herbrandised} - something we will later explain in detail. In the next section, we will show that, under the right interpretation of a first order language, these principles are true in Moerdijk's topos of filters.

\section{The filter topos $\mathcal{N}$} \label{sec:filtertopos}
For this section, we assume some basic knowledge about Grothendieck topoi, what it means to interpret a first order language in a Heyting category, and forcing semantics; \cite[Chapter 4]{van1995basic} and \cite[Chapter 6]{mac1992sheaves} can be used as a reference.

\subsection{The filter construction}
In \cite{blass1977two}, Blass introduced a category of filters of sets and ``continuous'' maps between them; rediscovered by Moerdijk, it was used as the underlying category of a site, whose sheaves provided a model of nonstandard arithmetic.

This category arises from $\mathbf{Set}$ as a special case of a general construction - the \emph{filter construction} - whose properties and functoriality were studied by Butz in \cite{butz2004saturated}. When applied on arbitrary categories with finite limits, it can be considered as a completion of the subobject posets under arbitrary meets. We will briefly discuss the general construction, following Butz, before specialising to the case of $\mathbf{Set}$.

We start by recalling the definition of filter on a $\land$-semilattice, i.e.\ on a poset with all finite meets.
\begin{dfn}
Let $S$ be a $\land$-semilattice. A \emph{filter} on $S$ is an inhabited, upwards closed subset of $S$ that is closed under binary meets.

We say that a filter is \emph{proper} if it does not coincide with $S$; otherwise, it is \emph{non proper}.
\end{dfn}
Following Palmgren, we would rather work with \emph{filter bases}, indexed by a set $I$.
\begin{dfn}
A \emph{filter base} $\mathcal{F}_I$ on $S$ is an inhabited set $\{\mathcal{F}_i\}_{i\in I}$ of elements of $S$, such that, for all $i, j \in I$, there exists $k \in I$ such that $\mathcal{F}_k \leq \mathcal{F}_i \land \mathcal{F}_j$.

A filter base \emph{generates} a filter, as follows: $A$ belongs to the filter if and only if there exists $i \in I$ such that $\mathcal{F}_i \leq A$.
\end{dfn}

Notice that a filter base generates a \emph{non} proper filter if and only if it contains the bottom element.

In every category $\mathbf{C}$ with finite limits, the subobject posets are in fact $\land$-semilattices; it is therefore possible to speak of filters of subobjects. That is sufficient to perform the filter construction.

\begin{dfn}
Let $\mathbf{C}$ be a finitely complete category. The \emph{filter category} $\mathfrak{F}\mathbf{C}$ over $\mathbf{C}$ is described by the following data.
\begin{itemize}
	\item Objects are pairs $(C, \mathcal{F}_I)$, where $C$ is an object of $\mathbf{C}$, and $\mathcal{F}_I$ is an $I$-indexed filter base on $\mathrm{Sub}(C)$.
\end{itemize}
We will usually write $\mathcal{F}$ for $(C, \mathcal{F}_I)$, when the underlying object and indexing set are not relevant, and just call it a \emph{filter}. We say that the $\mathcal{F}_i$, $i\in I$, are the \emph{base objects} of the filter.
\begin{itemize}
	\item Morphisms are ``germs of continuous morphisms''. A continuous morphism $\alpha: (C, \mathcal{F}_I) \to (D, \mathcal{G}_J)$ is a partial morphism
	\begin{equation*}
	\begin{tikzcd}[column sep=3ex, row sep=3ex]
		& \mathcal{F}_i \arrow[tail]{dl} \arrow{dr}{\alpha} & \\
		C & & D
	\end{tikzcd}
	\end{equation*}
	in $\mathbf{C}$, defined on some base object $\mathcal{F}_i$, such that for all $j \in J$, there exists $i' \in I$ such that $ \mathcal{F}_{i'} \leq \alpha^* \mathcal{G}_j$ in $\mathrm{Sub}(C)$.
	
	We declare two such morphisms $\alpha: \mathcal{F}_i \to D$, $\alpha' : \mathcal{F}_j \to D$ \emph{equivalent} if there exists $k \in I$ such that $\mathcal{F}_k \leq \mathcal{F}_i \land \mathcal{F}_j$, and $\restr{\alpha}{\mathcal{F}_k} = \restr{\alpha'}{\mathcal{F}_k}$; that is, the following pullback square commutes:
	\begin{equation*}
	\begin{tikzcd}
	\mathcal{F}_k \arrow[tail]{r} \arrow[tail]{d} & \mathcal{F}_i \arrow{d}{\alpha} \\
	\mathcal{F}_j \arrow{r}{\alpha'} & D\;.
	\end{tikzcd}
	\end{equation*}
\end{itemize}
We have an embedding of $\mathbf{C}$ into $\mathfrak{F}\mathbf{C}$, where an object $C$ of $\mathbf{C}$ is identified with the ``simple'' filter $(C, \{C\})$. We will usually still denote the latter with $C$.
\end{dfn}
We will not be overly pedantic about distinguishing between morphisms and their germs, and will write both in the same style.

\begin{lem}
The category $\mathfrak{F}\mathbf{C}$ is finitely complete.
\end{lem}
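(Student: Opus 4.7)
The plan is to construct a terminal object and all pullbacks in $\mathfrak{F}\mathbf{C}$ explicitly, which suffices for finite completeness. Throughout, one has to choose representatives of morphisms carefully, refining their domains to common base objects so that germ equivalence can be exploited, and to verify both that each proposed collection really is a filter base (using the stability of subobject meets in $\mathbf{C}$) and that the resulting morphisms satisfy the continuity condition.

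For the terminal object, I would take the terminal $1$ of $\mathbf{C}$, embedded as the simple filter $(1, \{1\})$. Given any filter $(C, \mathcal{F}_I)$, picking any base object $\mathcal{F}_i$ together with the unique $\mathbf{C}$-morphism $\mathcal{F}_i \to 1$ gives a continuous partial morphism; uniqueness up to germ equivalence follows because any two such choices agree after restricting to a common refinement inside $\mathcal{F}_I$.

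For the pullback of $\alpha\colon (C, \mathcal{F}_I) \to (E, \mathcal{H}_K)$ and $\beta\colon (D, \mathcal{G}_J) \to (E, \mathcal{H}_K)$, I would first pick representatives defined on base objects $\mathcal{F}_{i_0}$ and $\mathcal{G}_{j_0}$. The underlying object is then the pullback $P$ of $\mathcal{F}_{i_0} \to E \leftarrow \mathcal{G}_{j_0}$ in $\mathbf{C}$, and the filter base on $P$ consists of the pullbacks of $\mathcal{F}_i \to E \leftarrow \mathcal{G}_j$ for $i$ refining $i_0$ and $j$ refining $j_0$. This is a filter base because pullback functors preserve finite meets of subobjects. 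The two projections are the evident restrictions of the legs of $P$, and their continuity follows from the same preservation property. For the universal property, a cone from $(F, \mathcal{E}_L)$ yields, after restricting its two sides to a common base object on which they factor through $\mathcal{F}_{i_0}$ and $\mathcal{G}_{j_0}$ and on which the two compositions into $E$ literally coincide as germs, a unique mediating $\mathbf{C}$-morphism into $P$; continuity of this mediator is obtained by taking, for each target base object of $P$, a refinement of the continuity witnesses supplied by the two sides of the cone.

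The main obstacle is, such as it is, entirely bookkeeping: tracking which base object each representative lives on and, at every step, choosing a common refinement so that the universal maps and pullbacks in $\mathbf{C}$ can be applied coherently, all while checking that each auxiliary collection of subobjects satisfies the filter base axiom. No substantive category-theoretic input is needed beyond finite completeness of $\mathbf{C}$ and the fact that the pullback functors $f^*\colon \mathrm{Sub}(E) \to \mathrm{Sub}(C)$ preserve finite meets; from these, a terminal object and all pullbacks exist in $\mathfrak{F}\mathbf{C}$, and hence so do all finite limits.
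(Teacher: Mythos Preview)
Your approach is correct. The paper instead verifies finite completeness via the decomposition into terminal object, binary products, and equalisers: the terminal object is the same as yours; the product of $(C,\mathcal{F}_I)$ and $(D,\mathcal{G}_J)$ is $(C\times D,(\mathcal{F}\times\mathcal{G})_{I\times J})$ with $(\mathcal{F}\times\mathcal{G})_{(i,j)}:=\mathcal{F}_i\times\mathcal{G}_j$; and the equaliser of a parallel pair $\alpha,\beta:(C,\mathcal{F}_I)\to(D,\mathcal{G}_J)$ is $(C',(\mathcal{F}\land C')_I)$, where $C'$ is the equaliser in $\mathbf{C}$ of representatives of $\alpha$ and $\beta$ restricted to a common base object. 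The two routes are standard equivalents and involve the same bookkeeping with representatives and refinements that you describe. The paper's decomposition buys a cleaner description of products --- the underlying object is literally $C\times D$, independent of any choice of representative --- whereas your direct pullback construction handles the general case in one step, at the cost of an underlying object $P=\mathcal{F}_{i_0}\times_E\mathcal{G}_{j_0}$ that depends on the chosen representatives.
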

\begin{proof}
It is sufficient that $\mathfrak{F}\mathbf{C}$ has a terminal object, binary products and equalisers. We give their construction, and omit the proof of the universal properties.

The terminal object is the filter $(1, \{1\})$. The product of $(C, \mathcal{F}_I)$ and of $(D, \mathcal{G}_J)$ is the filter $(C \times D, (\mathcal{F} \times \mathcal{G})_{I \times J})$, where $(\mathcal{F} \times \mathcal{G})_{(i,j)} := \mathcal{F}_i \times \mathcal{G}_j$, for all $i \in I$, $j \in J$.

The equaliser of two morphisms $\alpha, \beta: (C, \mathcal{F}_I) \to (D, \mathcal{G}_J)$, represented by $\alpha: \mathcal{F}_i \to D$ and $\beta: \mathcal{F}_j \to D$, is the inclusion $(C', (\mathcal{F} \land C')_I) \rightarrowtail (C, \mathcal{F}_I)$, where $C'$ is the equaliser of $\alpha$ and $\beta$ in $\mathbf{C}$, and $(\mathcal{F} \land C')_i := \mathcal{F}_i \land C'$ for all $i \in I$.
\end{proof}

\begin{lem}
A morphism $\alpha: \mathcal{F} \to \mathcal{G}$ of $\mathfrak{F}\mathbf{C}$, defined on a base object $\mathcal{F}_i$, is a monomorphism if and only if there exists a base object $\mathcal{F}_j \leq \mathcal{F}_i$ such that $\restr{\alpha}{\mathcal{F}_j}$ is a monomorphism in $\mathbf{C}$.
\end{lem}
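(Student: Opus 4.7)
The plan is to prove both directions by reducing monicity of $\alpha$ in $\mathfrak{F}\mathbf{C}$ to monicity of an appropriate restriction in $\mathbf{C}$, using the kernel pair.

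For the easy direction, suppose $\alpha|_{\mathcal{F}_j}$ is a monomorphism in $\mathbf{C}$ for some $\mathcal{F}_j \leq \mathcal{F}_i$. Given two morphisms $\beta,\gamma:\mathcal{H}\to\mathcal{F}$ in $\mathfrak{F}\mathbf{C}$ with $\alpha\beta=\alpha\gamma$, I would first use the continuity conditions on $\beta$ and $\gamma$ to pick a single base object $\mathcal{H}_m$ contained both in $\beta^{*}\mathcal{F}_j$ and in $\gamma^{*}\mathcal{F}_j$, and where the equality $\alpha\beta=\alpha\gamma$ is witnessed by strict equality in $\mathbf{C}$. On $\mathcal{H}_m$, both $\beta$ and $\gamma$ factor through $\mathcal{F}_j\hookrightarrow C$, and composing with the monic $\alpha|_{\mathcal{F}_j}$ forces $\beta|_{\mathcal{H}_m}=\gamma|_{\mathcal{H}_m}$ as morphisms in $\mathbf{C}$; hence $\beta=\gamma$ as germs.

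For the reverse direction, assume $\alpha:\mathcal{F}\to\mathcal{G}$ is a monomorphism in $\mathfrak{F}\mathbf{C}$. The idea is to exhibit the kernel pair of $\alpha|_{\mathcal{F}_i}$ in $\mathbf{C}$ as the parallel pair of a pair of morphisms in $\mathfrak{F}\mathbf{C}$ that $\alpha$ coequalises. Concretely, let $K\rightrightarrows \mathcal{F}_i$, with projections $\pi_1,\pi_2$, be the kernel pair of $\alpha|_{\mathcal{F}_i}$ in $\mathbf{C}$, and endow $K$ with the filter base $\mathcal{K}_{(i',j')}:=K\cap(\mathcal{F}_{i'}\times\mathcal{F}_{j'})$ indexed by $I\times I$. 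A direct check shows this is a filter base on $\mathrm{Sub}(K)$, and $\pi_1,\pi_2:\mathcal{K}\to\mathcal{F}$ are continuous with $\alpha\pi_1=\alpha\pi_2$ by construction. By monicity of $\alpha$ in $\mathfrak{F}\mathbf{C}$, $\pi_1=\pi_2$ as germs, so there exist indices $i',j'\in I$ such that $\pi_1$ and $\pi_2$ coincide on $\mathcal{K}_{(i',j')}=K\cap(\mathcal{F}_{i'}\times\mathcal{F}_{j'})$ in $\mathbf{C}$.

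To finish, pick a base object $\mathcal{F}_j\leq \mathcal{F}_i\wedge\mathcal{F}_{i'}\wedge\mathcal{F}_{j'}$, which exists by the filter base property. Then $K\cap(\mathcal{F}_j\times\mathcal{F}_j)\leq K\cap(\mathcal{F}_{i'}\times\mathcal{F}_{j'})$, and on the latter the two projections agree, so $K\cap(\mathcal{F}_j\times\mathcal{F}_j)$ is contained in the diagonal $\Delta_{\mathcal{F}_j}$. But $K\cap(\mathcal{F}_j\times\mathcal{F}_j)$ is exactly the kernel pair of $\alpha|_{\mathcal{F}_j}$ in $\mathbf{C}$; its coincidence with the diagonal gives that $\alpha|_{\mathcal{F}_j}$ is a monomorphism in $\mathbf{C}$, as required. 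The main subtlety I expect to have to spell out carefully is the filter-base verification for $\mathcal{K}$ and the continuity of $\pi_1,\pi_2$; the germ bookkeeping in the first direction is routine once one fixes a common refinement $\mathcal{H}_m$.
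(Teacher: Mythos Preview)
Your argument is correct. The paper itself does not give a proof of this lemma; it simply refers to \cite[Lemma 2.2]{butz2004saturated}. Your kernel-pair approach is the standard one and is essentially what appears in Butz's paper: endow the kernel pair $K$ of $\alpha|_{\mathcal{F}_i}$ with the product-filter base, observe that the two projections give a parallel pair in $\mathfrak{F}\mathbf{C}$ coequalised by $\alpha$, and use monicity to force them to agree on some base object, which then yields a restriction on which $\alpha$ is monic in $\mathbf{C}$.

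One small point worth tightening when you write it out in full: your notation $K\cap(\mathcal{F}_{i'}\times\mathcal{F}_{j'})$ is set-theoretic, but the ambient category $\mathbf{C}$ is only assumed finitely complete. Everything still goes through if you read these as meets in $\mathrm{Sub}(C\times C)$ and use the pasting lemma for pullbacks to identify $K\wedge(\mathcal{F}_j\times\mathcal{F}_j)$ with the kernel pair of $\alpha|_{\mathcal{F}_j}$; you already flag this as the main thing to spell out, and it is indeed routine.
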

\begin{proof}
See \cite[Lemma 2.2]{butz2004saturated}.
\end{proof}

\begin{prop}
For all filters $\mathcal{F}$ in $\mathfrak{F}\mathbf{C}$, $\mathrm{Sub}(\mathcal{F})$ is a meet-complete semilattice, and, for all $\alpha: \mathcal{F} \to \mathcal{G}$, the change of base functor $\alpha^*$ preserves all meets.
\end{prop}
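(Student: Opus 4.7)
The plan is to reduce the statement to a lattice-theoretic fact about filters on $\mathrm{Sub}(C)$, where $\mathcal{F} = (C, \mathcal{F}_I)$. First I would establish an order-reversing bijection between $\mathrm{Sub}(\mathcal{F})$ in $\mathfrak{F}\mathbf{C}$ and the poset of filters on $\mathrm{Sub}(C)$ (in $\mathbf{C}$) containing $\mathcal{F}_I$. Given a representative mono $(E, F_E) \rightarrowtail (C, \mathcal{F}_I)$, the preceding lemma lets us assume the underlying $\mathbf{C}$-morphism is already a mono $E \rightarrowtail C$; pushing the filter base $F_E$ forward along this inclusion gives a filter on $\mathrm{Sub}(C)$ containing $\mathcal{F}_I$, by the continuity condition on the morphism. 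Conversely, a filter $F$ on $\mathrm{Sub}(C)$ containing $\mathcal{F}_I$ determines a representative by picking any element $E \in F$ as ambient object and restricting $F$ to $\mathrm{Sub}(E)$. The germ equivalence on monos matches exactly equality of the associated filters on $\mathrm{Sub}(C)$, and the subobject order $S \leq S'$ corresponds to the reverse filter inclusion $F_S \supseteq F_{S'}$.

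Under this bijection, meets in $\mathrm{Sub}(\mathcal{F})$ correspond to joins of filters. Given any family $\{F_k\}_{k \in K}$ of filters on $\mathrm{Sub}(C)$ containing $\mathcal{F}_I$, the join $\bigvee_k F_k$ is the filter generated by $\bigcup_k F_k$, concretely the set of $Y \in \mathrm{Sub}(C)$ dominating some finite meet $X_1 \land \cdots \land X_n$ with each $X_i \in F_{k_i}$. This is plainly a filter (closed under binary meets, since binary meets of the generators land back in the same set of generators), contains $\mathcal{F}_I$, and is by construction the least filter above all $F_k$, so its associated subobject is the greatest lower bound of the $S_k$. Hence $\mathrm{Sub}(\mathcal{F})$ has all meets.

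For preservation by change of base, I would unwind the construction of pullbacks in $\mathfrak{F}\mathbf{C}$ from products and equalisers (as in the previous lemma) to see that, for $\alpha \colon \mathcal{F} \to \mathcal{G} = (D, \mathcal{G}_J)$ represented by a partial $\alpha \colon \mathcal{F}_{i_0} \to D$ and a subobject $S \rightarrowtail \mathcal{G}$ with associated filter $F$ on $\mathrm{Sub}(D)$, the pullback $\alpha^{*}S$ has associated filter on $\mathrm{Sub}(C)$ generated by $\mathcal{F}_I \cup \{\alpha^{-1}(X) : X \in F\}$, where $\alpha^{-1}$ is the pullback in $\mathbf{C}$ along $\mathcal{F}_{i_0} \to D$ and its image is viewed in $\mathrm{Sub}(C)$ via the inclusion $\mathcal{F}_{i_0} \rightarrowtail C$. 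Since pullback in $\mathbf{C}$ preserves finite meets of subobjects, the assignment $F \mapsto \alpha^{-1}(F)$ preserves arbitrary filter joins, and adjoining $\mathcal{F}_I$ preserves joins as well; translating back through the order-reversing correspondence, this is exactly the preservation of arbitrary meets in $\mathrm{Sub}(\mathcal{F})$ by $\alpha^{*}$. The principal obstacle is the first step: verifying that the subobject-to-filter dictionary is well-defined on germ classes and faithful, and that the pullback in $\mathfrak{F}\mathbf{C}$ computes the claimed filter. Once these identifications are established, the remaining work is purely order-theoretic, keyed on the fact that $\alpha^{-1}$ commutes with finite meets in $\mathbf{C}$.
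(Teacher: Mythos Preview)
Your proposal is correct and follows essentially the same route as the paper: identify subobjects of $(C,\mathcal{F}_I)$ with filters on $\mathrm{Sub}(C)$ refining $\mathcal{F}_I$ (via the preceding lemma on monomorphisms), compute meets of subobjects as the filter generated by finite meets of base elements from the family, and check preservation under $\alpha^*$ directly from the explicit description of pullbacks in $\mathfrak{F}\mathbf{C}$. The paper is terser on the last point (``easily verified by the explicit construction of pullbacks''), whereas you spell out that $\alpha^{-1}$ commutes with finite meets and hence with filter joins, but the argument is the same.
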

\begin{proof}
By the previous lemma, if $\alpha : (C, \mathcal{F}_I) \rightarrowtail (D, \mathcal{G}_J)$ is a monomorphism, there is some base object $\mathcal{F}_i$ such that $\restr{\alpha}{\mathcal{F}_i}: \mathcal{F}_i \rightarrowtail D$ is a monomorphism in $\mathbf{C}$. Then $(C, \mathcal{F}_I)$ is isomorphic to the filter $(D, (\mathcal{G} \land \mathcal{F}_i)_J)$. It follows that subobjects of $(D, \mathcal{G}_J)$ are in one-to-one correspondence to objects $(D, \mathcal{G}'_{J'})$, such that the base $\mathcal{G}'_{J'}$ generates a filter larger than $\mathcal{G}_J$.

Given an arbitrary family of subobjects $\big\{\big(D, \mathcal{G}^{(i)}_{J^{(i)}}\big)\big\}_{i\in I}$,  let $\mathcal{H}$ be the filter generated by finite meets of the form
\begin{equation*}
	\mathcal{G}^{(i_1)}_{j^{(i_1)}} \land \ldots \land \mathcal{G}^{(i_n)}_{j^{(i_n)}}\;,
\end{equation*}
for $(i_1, \ldots, i_n)$ an arbitrary finite sequence in $I$, and $j^{(i_k)} \in J^{(i_k)}$, $k = 1, \ldots, n$. Then
\begin{equation*}
	\bigwedge_{i \in I} \, \big(D, \mathcal{G}^{(i)}_{J^{(i)}}\big) \simeq (D, \mathcal{H})\;.
\end{equation*}
That this is preserved by change of base can be easily verified by the explicit construction of pullbacks in $\mathfrak{F}\mathbf{C}$.
\end{proof}

An important feature of the filter construction is that it preserves some of the additional properties that $\mathbf{C}$ may have.

\begin{prop}
Let $\mathbf{C}$ be a finitely complete category.
\begin{enumerate}[label=(\alph*)]
	\item If $\mathbf{C}$ is regular, then $\mathfrak{F}\mathbf{C}$ is also regular.
	\item If $\mathbf{C}$ is coherent, then $\mathfrak{F}\mathbf{C}$ is also coherent.
\end{enumerate}
\end{prop}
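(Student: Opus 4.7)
The plan is to lift image factorisations (for regularity) and binary joins of subobjects (for coherence) from $\mathbf{C}$ to $\mathfrak{F}\mathbf{C}$, working through the description of subobjects of a filter $(D, \mathcal{G}_J)$ as enlargements of the filter base $\mathcal{G}_J$, established in the preceding proposition.

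For (a), given a morphism $\alpha: (C, \mathcal{F}_I) \to (D, \mathcal{G}_J)$ represented on a base object $\mathcal{F}_i$, I would factor each restriction $\restr{\alpha}{\mathcal{F}_k}$ (for $\mathcal{F}_k \leq \mathcal{F}_i$ in the base) in $\mathbf{C}$ as a regular epi $\mathcal{F}_k \twoheadrightarrow \mathrm{Im}_k$ followed by a mono $\mathrm{Im}_k \rightarrowtail D$, and set $\mathrm{Im}(\alpha) := (D, \{\mathrm{Im}_k \wedge \mathcal{G}_j\}_{(k,j)})$. Since the images $\mathrm{Im}_k$ shrink as $\mathcal{F}_k$ shrinks, this is a filter base, and it presents a subobject of $(D, \mathcal{G}_J)$; moreover, $\alpha$ factors through it as a morphism $\tilde\alpha: (C, \mathcal{F}_I) \to \mathrm{Im}(\alpha)$. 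The main task is then to verify that $\tilde\alpha$ is a regular epi, and that this property is preserved under pullback. I would do both by computing the relevant kernel pairs, pullbacks, and coequalisers base-object by base-object using the explicit formulae for products and equalisers from the first lemma, and transferring the corresponding regular-epi facts from $\mathbf{C}$.

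For (b), assuming (a), it remains to construct finite joins of subobjects and verify their stability under pullback. The initial subobject of $(C, \mathcal{F}_I)$ is the non-proper enlargement $(C, \mathcal{F}_I \cup \{0_C\})$, with $0_C$ the strict initial object of the coherent category $\mathbf{C}$. Given subobjects presented by enlarged bases $\mathcal{F}_I \cup \{U_{i'}\}_{i'}$ and $\mathcal{F}_I \cup \{V_{i''}\}_{i''}$, their binary join is $\mathcal{F}_I \cup \{U_{i'} \vee V_{i''}\}_{(i',i'')}$, with the binary joins taken in $\mathrm{Sub}_{\mathbf{C}}(C)$. The universal property and pullback stability again reduce, base by base, to the corresponding facts in $\mathbf{C}$.

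The main obstacle will be pullback stability of regular epis in (a), because a pullback in $\mathfrak{F}\mathbf{C}$ combines base objects from two different filters into a new filter base on a product, and one needs the image construction to commute with this combination. Tracing through the indexing, this ultimately reduces to the stability of regular epis under pullback in $\mathbf{C}$, but the bookkeeping with base-object labels is delicate; accordingly, I expect the concrete proof to follow Butz's treatment in the cited reference.
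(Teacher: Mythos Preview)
Your proposal is reasonable and indeed follows the shape of Butz's argument, which is all the paper invokes: its entire proof is ``See \cite[Proposition 3.1]{butz2004saturated} and \cite[Proposition 3.2]{butz2004saturated}, respectively.'' So there is nothing to compare against except the cited reference, and you have correctly anticipated this in your final sentence.
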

\begin{proof}
See \cite[Proposition 3.1]{butz2004saturated} and \cite[Proposition 3.2]{butz2004saturated}, respectively.
\end{proof}
Moreover, if $\mathbf{C}$ has all finite coproducts, then $\mathfrak{F}\mathbf{C}$ has them too. In this case, the initial object of $\mathfrak{F}\mathbf{C}$ is the simple filter $(0, \{0\})$; this is isomorphic to any non proper filter $(C, \mathcal{F}_I)$, where $\mathcal{F}_i = 0$ for some $i \in I$. Given two filters $(C, \mathcal{F}_I)$ and $(D, \mathcal{G}_J)$, their coproduct in $\mathfrak{F}\mathbf{C}$ is the filter $(C + D, (\mathcal{F} + \mathcal{G})_{I \times J})$, where $(\mathcal{F} + \mathcal{G})_{(i,j)} := \mathcal{F}_i + \mathcal{G}_j$, for all $i \in I$, $j \in J$.

It is \emph{not}, however, the case that $\mathfrak{F}\mathbf{C}$ is necessarily a Heyting category, when $\mathbf{C}$ is. But this is not a problem, since we really only need $\mathfrak{F}\mathbf{Set}$ to be a coherent category.

As it happens, coherent categories admit a ``natural'' Grothendieck topology, sometimes called the \emph{precanonical} topology: for all objects $C$ of $\mathbf{C}$, a $K$-cover of $C$ is a \emph{finite} family $\{ f_i : C_i \to C \}_{i=1}^n$, such that the union of the images of the $f_i$ is the whole of $C$.

As shown in \cite[Example C2.1.12.(d)]{johnstone2002sketches}, $K$ is \emph{subcanonical}; that is, representable presheaves, of the form $\mathbf{y}C$, for $C$ an object of $\mathbf{C}$, are $K$-sheaves.

Explicitly, for a filter category $\mathfrak{F}\mathbf{C}$, that $\{ \beta_k : \mathcal{G}_k \to \mathcal{F} \}_{k=1}^n$ is a $K$-cover means that, for all choices of base objects $\mathcal{G}_{k, j_k}$ of $\mathcal{G}_k$, $k = 1, \ldots, n$, there exists a base object $\mathcal{F}_i$ of $\mathcal{F}$ such that
\begin{equation*}
	\mathcal{F}_i \leq \beta_1 \mathcal{G}_{1, j_1} \lor \ldots \lor \mathcal{G}_{n, j_n}\;.
\end{equation*}

\begin{dfn}
We will denote the topos $\mathrm{Sh}(\mathfrak{F}\mathbf{Set}, K)$ by $\mathcal{N}$, for \emph{nonstandard} universe.
\end{dfn}

As for all Grothendieck topoi, the \emph{global sections} functor
\begin{equation*}
	\Gamma : \mathcal{N} \to \textbf{Set}\;,
\end{equation*}
sending a sheaf $F$ to the set $\mathrm{Hom}(1, F)$, has a left adjoint $\Delta : \textbf{Set} \to \mathcal{N}$ - the \emph{constant objects} functor. This can be explicitly characterised as follows: for all sets $S$, at all filters $\mathcal{F}$ of $\mathfrak{F}\mathbf{Set}$,
\begin{equation*}
	(\Delta S)\mathcal{F} = \{\alpha: \mathcal{F} \to S \;|\; \alpha \text{ takes a finite number of values}\}\;.
\end{equation*}
Here, $S$ is identified with the simple filter $(S, \{S\})$. It follows that the Yoneda embedding preserves all coproducts of a finite number of copies of 1, but \emph{not} the natural numbers object.

Let $\mathcal{L}$ be a many sorted first order language, and suppose we have fixed an interpretation of $\mathcal{L}$ in $\mathbf{Set}$. We call formulae of $\mathcal{L}$ \emph{internal}, and denote them with small Greek letters. We also want the types of $\mathcal{L}$ to be closed under the clause
\begin{itemize}
	\item[$\triangleright$] if $S$ is a type, then $S^*$ is a type,
\end{itemize}
where $S^*$ is meant to denote the type of finite sequences of elements of type $S$. We will borrow all the notation from the first section in handling finite sequences.

We will identify types, function and relation symbols of $\mathcal{L}$ with their interpretation in $\mathbf{Set}$, and use the standard double square bracket notation for the derived interpretations that we are now going to define. We will take advantage of this semantic overload, and say, for instance, that the type $S$ is inhabited, or that it is infinite, if its interpretation in $\mathbf{Set}$ is; and also that a formula $\varphi$ is \emph{true}, if its interpretation is true in $\mathbf{Set}$.

Let $\mathcal{L}_\mathrm{st}$ be the extension of $\mathcal{L}$ with a unary predicate symbol $\mathrm{st}_S \subseteq S$ for each type $S$. We denote formulae of $\mathcal{L}_\mathrm{st}$ with \emph{capital} Greek letters. We will use abbreviations
\begin{align*}
		\forallst x:S \,\Phi(x) & := \forall x:S \,(\mathrm{st}_S(x) \to \Phi(x))\;, \\
		\existst x:S \,\Phi(x) & := \exists x:S \,(\mathrm{st}_S(x) \land \Phi(x))\;,
\end{align*}
as well as the defined predicate
\begin{equation*}
	\mathrm{hyper}_S(s) := \forallst x : S \,(x \in s)\;,
\end{equation*}
for $s:S^*$, with the relative quantifiers
\begin{align*}
		\forallhyp s:S^* \,\Phi(s) & := \forall s:S^* \,(\mathrm{hyper}_S(s) \to \Phi(s))\;, \\
		\existshyp s:S^* \,\Phi(s) & := \exists s:S^* \,(\mathrm{hyper}_S(s) \land \Phi(s))\;.
\end{align*}
We will often drop the subscript, and just write $\mathrm{st}(x)$, or $\mathrm{hyper}(s)$.

We define an interpretation of $\mathcal{L}_\mathrm{st}$ in $\mathcal{N}$, as follows ($\mathbf{y}$ denotes the Yoneda embedding):
\begin{enumerate}[label=(\roman*)]
	\item for each type $S$, $\llbracket S \rrbracket := \mathbf{y}S$;
	\item for each constant $c:S$, $\llbracket c \rrbracket := \mathbf{y}c: 1 \to \mathbf{y}S$;
	\item for each function symbol $f: S_1,\ldots,S_n \to S$, $\llbracket f \rrbracket := \mathbf{y}f : \mathbf{y}(S_1 \times \ldots \times S_n) \to \mathbf{y}S$;
	\item for each relation symbol $R \subseteq S_1, \ldots, S_n$ of $\mathcal{L}$, $\llbracket R \rrbracket := \mathbf{y}R \rightarrowtail \mathbf{y}(S_1 \times \ldots \times S_n)$;
	\item for each type $S$, $\llbracket \mathrm{st}_S \rrbracket := \Delta S$.
\end{enumerate}
In particular, $\llbracket \mathrm{st}_\mathbb{N} \rrbracket$ is the natural numbers object in $\mathcal{N}$, and the larger sheaf $\llbracket \mathbb{N} \rrbracket$ is a nonstandard model of arithmetic.

The following, fundamental theorem connects the forcing semantics of internal formulae in $\mathcal{N}$ with truth in the metatheory. It is found as \cite[Theorem 1]{palmgren1998developments}, and is an extension of \cite[Lemma 2.1]{moerdijk1995model}.

\begin{thm} \label{thm:transfer}
Let $\varphi(x)$ be an internal formula, with free variable $x$ of type $S$, and $(C, \mathcal{F}_I)$ a filter. For all $\alpha \in \llbracket S \rrbracket \mathcal{F}$,
\begin{equation*}
	\mathcal{F} \Vdash \varphi(\alpha)
\end{equation*}
if and only if there exists $i \in I$ such that, for all $u \in \mathcal{F}_i$, it holds that $\varphi(\alpha(u))$.
\end{thm}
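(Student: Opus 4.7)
The plan is to induct on the logical complexity of the internal formula $\varphi$, unpacking the forcing semantics of each connective in the coherent topos $\mathcal{N}$ and tracing it back to pointwise validity on a base object via the explicit description of morphisms and covers in $\mathfrak{F}\mathbf{Set}$. For notational simplicity I will treat $\varphi$ as having a single free variable $x:S$; the case of a tuple is identical. Since $\alpha \in \llbracket S \rrbracket \mathcal{F}$, $\alpha$ is represented by a partial map $\alpha_0 : \mathcal{F}_{i_0} \to S$ for some $i_0 \in I$, and $\varphi(\alpha(u))$ is shorthand for $\varphi(\alpha_0(u))$ for $u \in \mathcal{F}_i$ with $i$ chosen $\leq i_0$.

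For the atomic case $\varphi = R(x)$, the interpretation $\llbracket R \rrbracket = \mathbf{y}R \rightarrowtail \mathbf{y}S$ is a subsheaf because the coherent topology is subcanonical. Thus $\mathcal{F} \Vdash R(\alpha)$ unfolds to saying that the germ $\alpha$ factors through $\mathbf{y}R$, which by the description of partial maps in $\mathfrak{F}\mathbf{Set}$ means exactly that some $\mathcal{F}_i$ maps into $R$ pointwise. Conjunction is then immediate from the closure of filter bases under binary meets. For disjunction one uses the description of $K$-covers recalled just before the definition of $\mathcal{N}$: $\mathcal{F} \Vdash \varphi \lor \psi$ iff there is a finite cover $\{\beta_k : \mathcal{G}_k \to \mathcal{F}\}$ with each $\mathcal{G}_k$ forcing $\varphi$ or $\psi$, and combining the inductive hypothesis with the covering condition produces a base object $\mathcal{F}_i$ covered pointwise by images of subsets on which $\varphi$ or $\psi$ is pointwise true; conversely, given such $\mathcal{F}_i$, the subfilters $(C, \{\mathcal{F}_j \cap V_\varphi\}_J)$ and $(C, \{\mathcal{F}_j \cap V_\psi\}_J)$ assemble into a two-element $K$-cover, where $V_\varphi := \{u \in C : \varphi(\alpha(u))\}$ and similarly for $\psi$.

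The key non-routine cases are implication and the universal quantifier. For $\varphi \to \psi$, the forward direction proceeds by forming the subfilter $\mathcal{F}' := (C, \{\mathcal{F}_i \cap V_\varphi\}_I)$ with canonical morphism $\iota : \mathcal{F}' \to \mathcal{F}$: by the inductive hypothesis for $\varphi$ applied to $\mathcal{F}'$ one has $\mathcal{F}' \Vdash \varphi(\alpha \iota)$, hence $\mathcal{F}' \Vdash \psi(\alpha \iota)$, hence by the inductive hypothesis for $\psi$ some $\mathcal{F}_j \cap V_\varphi \subseteq V_\psi$, which translates to $\varphi(\alpha(u)) \to \psi(\alpha(u))$ for all $u \in \mathcal{F}_j$. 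The converse is a straightforward use of continuity: given such $\mathcal{F}_j$ and any $\beta : \mathcal{G} \to \mathcal{F}$ with $\mathcal{G} \Vdash \varphi(\alpha\beta)$, one finds a common base object of $\mathcal{G}$ witnessing $\varphi(\alpha(\beta(v)))$ pointwise and mapping into $\mathcal{F}_j$. For $\forall y{:}T \, \varphi(x,y)$, the forward direction uses the generic point: since every type is inhabited, $\mathbf{y}T \to 1$ and hence $\pi_1 : \mathcal{F} \times \mathbf{y}T \to \mathcal{F}$ are covers, so $\mathcal{F} \times \mathbf{y}T \Vdash \varphi(\alpha\pi_1, \pi_2)$; the inductive hypothesis supplies a base object of the form $\mathcal{F}_i \times T$ on which $\varphi$ holds pointwise, which is exactly the desired condition. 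The backward direction is again immediate by pullback.

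For the existential quantifier $\exists y{:}T \, \varphi(x,y)$ the forward direction repeats the pattern used for disjunction: a $K$-cover-plus-witness decomposition of $\mathcal{F} \Vdash \exists y \, \varphi(\alpha, y)$, combined with the inductive hypothesis on each piece and the covering property, yields a base object $\mathcal{F}_i$ on which a witness exists pointwise. The converse is where a mild metatheoretical assumption intervenes: given $\mathcal{F}_i$ with $\forall u \in \mathcal{F}_i \, \exists t \in T \, \varphi(\alpha(u), t)$, one invokes the axiom of choice in the metatheory to pick a function $y : \mathcal{F}_i \to T$, whose germ is a morphism $\gamma : \mathcal{F} \to \mathbf{y}T$, and the inductive hypothesis applied to $\varphi(\alpha, \gamma)$ yields $\mathcal{F} \Vdash \exists y \, \varphi(\alpha, y)$. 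I expect the implication and universal cases to be the main obstacles, since they are where one must manufacture the correct auxiliary object in $\mathfrak{F}\mathbf{Set}$ (the cut-down subfilter in one case, the product with the generic point in the other); the existential case is conceptually the simplest but is the one place where the proof leans on metatheoretical choice.
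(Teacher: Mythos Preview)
The paper does not supply its own proof of this theorem; it cites Palmgren (1998, Theorem~1), extending Moerdijk (1995, Lemma~2.1). Your induction on the logical complexity of $\varphi$ is exactly the argument those references give, and your treatment of the atomic, $\land$, $\lor$, $\to$, and $\forall$ cases matches theirs.

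Two comments. First, a minor one: in the $\forall$ case you do not need $\pi_1 : \mathcal{F} \times \mathbf{y}T \to \mathcal{F}$ to be a cover, and indeed the paper later points out that in this setting types may be empty. The forcing clause for $\forall$ already yields $\mathcal{F} \times T \Vdash \varphi(\alpha\pi_1,\pi_2)$ directly, and since the base objects of $\mathcal{F} \times T$ are exactly the $\mathcal{F}_i \times T$, the inductive hypothesis gives what you want.

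Second, and more substantively: your backward direction for $\exists$ appeals to the axiom of choice in the metatheory, but the paper explicitly remarks (just before treating $\mathsf{HAC}^\mathrm{st}$) that no constructively controversial principle has been used up to that point, so the transfer theorem is meant to be choice-free. The standard choice-free argument is the ``graph'' trick: given $\mathcal{F}_i$ with $\forall u \in \mathcal{F}_i\,\exists t\,\varphi(\alpha(u),t)$, set $W := \{(u,t) \in \mathcal{F}_i \times T : \varphi(\alpha(u),t)\}$ and form the filter $\mathcal{G}$ on $C \times T$ with base $\{W \cap (\mathcal{F}_j \times T)\}_j$. The first projection $\mathcal{G} \to \mathcal{F}$ is a $K$-cover (its image on each base object is all of $\mathcal{F}_j$ by hypothesis), the second projection is the witness, and $\varphi$ holds pointwise on $W$ by construction; the inductive hypothesis and locality of forcing then give $\mathcal{F} \Vdash \exists y\,\varphi(\alpha,y)$. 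Your version with choice is not incorrect, but it makes the theorem depend on an assumption the paper deliberately postpones.
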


\begin{cor}[Transfer theorem]
Let $\varphi$ be an internal sentence. Then $\varphi$ is true if and only if $\Vdash \varphi$.
\end{cor}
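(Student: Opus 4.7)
The plan is to reduce this corollary immediately to the preceding theorem by specialising it to the terminal filter. By convention, $\Vdash \varphi$ for a sentence means $1 \Vdash \varphi$, where $1$ denotes the terminal object of $\mathfrak{F}\mathbf{Set}$, i.e.\ the simple filter $(1, \{1\})$ with a one-element indexing set and sole base object $1$ itself.

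I would then invoke Theorem \ref{thm:transfer} applied to the filter $\mathcal{F} = 1$. Since $\varphi$ is a sentence, there is no genuine free variable to valuate; one may either regard $\varphi$ as a formula $\varphi(x)$ with a vacuous free variable $x$ of any (inhabited) type $S$ and $\alpha$ any element of $\llbracket S \rrbracket 1$, or simply observe that the proof of Theorem \ref{thm:transfer} applies unchanged in the sentence case. Either way, the right-hand side of the equivalence becomes: there exists $i$ in the singleton index set such that for all $u \in \mathcal{F}_i = 1$, $\varphi$ holds. Since $\mathcal{F}_i = 1$ has a unique element and $\varphi$ does not depend on $u$, this just asserts the truth of $\varphi$ in the metatheory.

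Hence $\Vdash \varphi$ is equivalent to $\varphi$ being true, as claimed. There is no real obstacle here: the content lies entirely in Theorem \ref{thm:transfer}, and the only thing to check is the trivial bookkeeping about the terminal filter. The one mildly delicate point is making sure that the variable-free case of Theorem \ref{thm:transfer} is unambiguous; the cleanest route is to adjoin a dummy variable of, say, type $\mathbb{N}$ (guaranteed inhabited by $0$) and pass $\alpha$ equal to the global section picking out $0$, so that the application of the theorem is entirely formal.
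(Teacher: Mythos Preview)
Your proposal is correct and matches the paper's approach: the paper states this corollary without proof, treating it as an immediate specialisation of Theorem~\ref{thm:transfer} to the terminal filter, exactly as you outline.
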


Theorem \ref{thm:transfer} says everything there is to know about internal formulae; we move on to the semantics of the standardness predicate.

\begin{lem}
Let $\mathcal{F}$ be a filter, $S$ a type of $\mathcal{L}$, and $\alpha \in \llbracket S \rrbracket \mathcal{F}$. Then: $\mathcal{F} \Vdash \mathrm{st}_S(\alpha)$ if and only if there exist a $K$-cover $\{\beta_k: \mathcal{G}_k \to \mathcal{F}\}_{k=1}^n$, and elements $x_1, \ldots, x_n \in S$, such that the diagrams
	\begin{equation*}
	\begin{tikzcd}
	\mathcal{G}_k \arrow{r}{\beta_k} \arrow{d}{!} & \mathcal{F} \arrow{d}{\alpha} \\
	1 \arrow{r}{x_k} & S
	\end{tikzcd}
	\end{equation*}
commute in $\mathfrak{F}\mathbf{Set}$, i.e.\ $\alpha\beta_k = x_k!$, for $k = 1,\ldots, n$.
\end{lem}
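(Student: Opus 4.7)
The plan is to exploit the fact that $\llbracket \mathrm{st}_S \rrbracket = \Delta S$ is literally a subsheaf of $\llbracket S \rrbracket = \mathbf{y}S$ — the map is pointwise the inclusion of maps $\mathcal{F} \to S$ with finite range into arbitrary germs of continuous maps. Since $\mathrm{st}_S$ is an atomic predicate interpreted by a subsheaf, the Kripke--Joyal forcing condition reduces to: $\mathcal{F} \Vdash \mathrm{st}_S(\alpha)$ iff the morphism $\alpha : \mathcal{F} \to \mathbf{y}S$ classified by Yoneda factors through $\Delta S \hookrightarrow \mathbf{y}S$, iff $\alpha$ (viewed as a germ of continuous map $\mathcal{F} \to S$ in $\mathfrak{F}\mathbf{Set}$) has finite image.

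For the direction ($\Leftarrow$), suppose the $K$-cover $\{\beta_k : \mathcal{G}_k \to \mathcal{F}\}_{k=1}^n$ and points $x_k \in S$ exist with $\alpha\beta_k = x_k \circ\,!$. Each composite $\alpha\beta_k$ is constant, so in particular it takes only one value, hence lies in $(\Delta S)(\mathcal{G}_k)$. Since $\Delta S \rightarrowtail \mathbf{y}S$ is a mono of sheaves and $\{\beta_k\}$ covers $\mathcal{F}$ in $K$, the local factorisations glue uniquely, forcing $\alpha \in (\Delta S)(\mathcal{F})$; equivalently, $\mathcal{F} \Vdash \mathrm{st}_S(\alpha)$.

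For the direction ($\Rightarrow$), suppose $\mathcal{F} = (C, \mathcal{F}_I)$ and $\mathcal{F} \Vdash \mathrm{st}_S(\alpha)$. Then $\alpha$ is represented by some partial morphism $\alpha_i : \mathcal{F}_i \to S$ in $\mathbf{Set}$ whose image is a finite set $\{x_1, \ldots, x_n\}$. Set $C_k := \alpha_i^{-1}(x_k) \subseteq \mathcal{F}_i \subseteq C$, so that $\mathcal{F}_i = \bigsqcup_{k=1}^n C_k$. Define $\mathcal{G}_k := (C, (\mathcal{F}_j \land C_k)_{j \in I})$; this is a filter base on $\mathrm{Sub}(C)$ — directedness is inherited from $\{\mathcal{F}_j\}_{j\in I}$ — and the identity on $C$ induces a canonical morphism $\beta_k : \mathcal{G}_k \to \mathcal{F}$. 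By construction, $\alpha\beta_k$ is represented on the base object $C_k$ by the constant map of value $x_k$, so $\alpha\beta_k = x_k \circ\,!$ as germs.

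It remains to verify that $\{\beta_k\}_{k=1}^n$ is a $K$-cover, using the explicit criterion from the excerpt: given any choice of base objects $\mathcal{G}_{k,j_k} = \mathcal{F}_{j_k} \land C_k$, select by the filter base property an index $i' \in I$ with $\mathcal{F}_{i'} \leq \mathcal{F}_i \land \mathcal{F}_{j_1} \land \ldots \land \mathcal{F}_{j_n}$. For any $y \in \mathcal{F}_{i'}$, the containment $y \in \mathcal{F}_i$ places $y$ in some $C_k$, and $y \in \mathcal{F}_{j_k}$ then places $y$ in $\beta_k(\mathcal{F}_{j_k} \land C_k)$, whence $\mathcal{F}_{i'} \leq \bigvee_k \beta_k \mathcal{G}_{k,j_k}$. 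The main delicate step is precisely this combinatorial check that the fibrewise refinements assemble into a $K$-cover — everything else is a matter of carefully unwinding the sheaf-theoretic interpretation of a subsheaf-valued atomic predicate.
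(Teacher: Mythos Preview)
Your proof is correct and follows essentially the same approach as the paper, which simply states that the result ``follows immediately from the interpretation chosen for the standardness predicate, and the description of $\Delta S$.'' You have unpacked precisely this: the explicit description of $(\Delta S)\mathcal{F}$ as germs of continuous maps $\mathcal{F}\to S$ with finite image, together with the Kripke--Joyal clause for an atomic subsheaf predicate, yields the equivalence, and your fibre-partition construction for the $(\Rightarrow)$ direction and the sheaf local-character argument for $(\Leftarrow)$ are exactly the details the paper leaves implicit.
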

\begin{proof} Follows immediately from the interpretation chosen for the standardness predicate, and the description of $\Delta S$.
\end{proof}

\begin{lem} \label{lem:quantifiers}
Let $\Phi(x,y)$ be an external formula, with free variables $x:S$ and $y:T$, $\mathcal{F}$ a filter, and $\alpha \in \llbracket S \rrbracket \mathcal{F}$. Then:
\begin{enumerate}[label=(\alph*)]
	\item $\mathcal{F} \Vdash \forallst y:T \, \Phi(\alpha, y)$ if and only if, for all $y \in T$, $\mathcal{F} \Vdash \Phi(\alpha, y!)$;
	\item $\mathcal{F} \Vdash \existst y:T \, \Phi(\alpha, y)$ if and only if there exist a $K$-cover $\{\beta_k: \mathcal{G}_k \to \mathcal{F}\}_{k=1}^n$, and elements $y_1, \ldots, y_n \in T$, such that
	\begin{equation*}
		\mathcal{G}_k \Vdash \Phi(\alpha\beta_k, y_k!)\;, \qquad k = 1,\ldots,n\;,
	\end{equation*}
	or, equivalently, there exists $t \in T^*$ such that $\mathcal{F} \Vdash \exists y \in t! \, \Phi(\alpha, y)$;
\end{enumerate}
\end{lem}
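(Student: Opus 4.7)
The plan is to unfold the abbreviations $\forallst$ and $\existst$ in terms of $\mathrm{st}_T$, and then apply the standard forcing clauses for the first-order connectives of a Grothendieck topos, together with the characterisation of $\Vdash \mathrm{st}_T$ given by the preceding lemma. Throughout I will make use, without further comment, of the two standard properties of Kripke--Joyal forcing in a sheaf topos: \emph{stability} (if $\mathcal{F} \Vdash \Phi(\alpha)$ and $\beta: \mathcal{G} \to \mathcal{F}$, then $\mathcal{G} \Vdash \Phi(\alpha\beta)$) and \emph{local character} (if $\{\beta_k: \mathcal{G}_k \to \mathcal{F}\}$ is a $K$-cover and $\mathcal{G}_k \Vdash \Phi(\alpha\beta_k)$ for each $k$, then $\mathcal{F} \Vdash \Phi(\alpha)$).

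For part (a), unfold $\forallst y:T \, \Phi(\alpha, y)$ to $\forall y:T \,(\mathrm{st}_T(y) \to \Phi(\alpha, y))$. For the forward direction, given $y \in T$, instantiate the universal quantifier with the global element $y!: \mathcal{F} \to T$; the preceding lemma (applied to the trivial singleton cover $\mathrm{id}_\mathcal{F}$) gives $\mathcal{F} \Vdash \mathrm{st}_T(y!)$, so the implication forces $\mathcal{F} \Vdash \Phi(\alpha, y!)$. For the converse, take any $\beta: \mathcal{G} \to \mathcal{F}$, any $z \in \llbracket T \rrbracket \mathcal{G}$, and any $\gamma: \mathcal{H} \to \mathcal{G}$ with $\mathcal{H} \Vdash \mathrm{st}_T(z\gamma)$; by the preceding lemma there is a $K$-cover $\{\delta_k: \mathcal{H}_k \to \mathcal{H}\}$ and elements $y_k \in T$ with $z\gamma\delta_k = y_k!$. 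The hypothesis gives $\mathcal{F} \Vdash \Phi(\alpha, y_k!)$, so by stability $\mathcal{H}_k \Vdash \Phi(\alpha\beta\gamma\delta_k, y_k!)$; local character then yields $\mathcal{H} \Vdash \Phi(\alpha\beta\gamma, z\gamma)$, as required.

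For part (b), unfold $\existst y:T \, \Phi(\alpha, y)$ to $\exists y:T \,(\mathrm{st}_T(y) \land \Phi(\alpha, y))$. The forcing clause for $\exists$ produces a $K$-cover $\{\beta_k: \mathcal{G}_k \to \mathcal{F}\}$ with witnesses $z_k \in \llbracket T \rrbracket \mathcal{G}_k$ such that $\mathcal{G}_k \Vdash \mathrm{st}_T(z_k)$ and $\mathcal{G}_k \Vdash \Phi(\alpha\beta_k, z_k)$. Applying the preceding lemma to each $\mathrm{st}_T(z_k)$ and refining the cover by composing with the covers thereby produced, we may without loss of generality assume $z_k = y_k!$ for constants $y_k \in T$, which is the first form of the claim. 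The converse is immediate: $y_k!$ is always forced standard, and local character upgrades the pointwise $\mathcal{G}_k \Vdash \Phi(\alpha\beta_k, y_k!)$ to the existential at $\mathcal{F}$. For the equivalent reformulation, set $t := \langle y_1, \ldots, y_n \rangle \in T^*$: the internal formula $\exists y \in t \, \Phi(\alpha, y)$ is (intuitionistically, and hence in any coherent category) provably equivalent to the finite disjunction $\Phi(\alpha, t_0) \lor \ldots \lor \Phi(\alpha, t_{n-1})$, since the length $n$ is a concrete standard natural number; the coherent forcing clause for finite disjunction is then exactly the existence of a $K$-cover on whose pieces a disjunct is forced, matching the first form of the statement.

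The main obstacle is purely bookkeeping: each forcing clause for $\forall$, $\to$ and $\exists$ quantifies over arbitrary morphisms into the current stage, and each application of the standardness lemma introduces its own $K$-cover. Since $K$ is generated by finite covers and is stable under pullback and closed under composition, these nestings compose without trouble, and in the end the only genuine input is the preceding lemma, used to trade a forced $\mathrm{st}_T(z)$ for a constant $y!$ on each piece of a suitable refinement.
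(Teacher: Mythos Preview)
Your argument is correct; the paper itself gives no proof but simply cites \cite[Lemma 3.3]{palmgren1997sheaf}, and your direct unfolding via the Kripke--Joyal clauses together with the preceding lemma on $\mathrm{st}_T$ is exactly the expected argument. One terminological slip worth noting: in the equivalent reformulation of (b) you call $\exists y \in t\,\Phi(\alpha,y)$ an \emph{internal} formula, but $\Phi$ is external---the decomposition into a finite disjunction is nonetheless valid, since $|t|$ is a concrete numeral and (as the paper observes) the Yoneda embedding into $\mathcal{N}$ preserves finite coproducts of $1$, so the bounded existential reduces to a finite join in $\mathrm{Sub}(\mathbf{y}\mathcal{F})$ regardless of whether the body is internal.
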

\begin{proof}
See \cite[Lemma 3.3]{palmgren1997sheaf}.
\end{proof}

With these lemmata, we are able to prove that the simple axioms that we imposed on the standardness predicate in the system E-HA$^{\omega*}_\mathrm{st}$ hold in $\mathcal{N}$. That the predicate respects equality is immediate; that closed terms are standard amounts, in this context, to the fact that, for all types $S$ and elements $\alpha \in \llbracket S \rrbracket 1$,
\begin{equation*}
	\Vdash \mathrm{st}_S(\alpha)\;,
\end{equation*}
as any morphism $\alpha: 1 \to S$ is obviously constant.

\begin{prop}
For all types $S$, $T$, the following statement is true in $\mathcal{N}$:
\begin{equation*}
	\forallst f:S\to T \, \forallst x:S \; \mathrm{st}_T(f(x))\;.
\end{equation*}
\end{prop}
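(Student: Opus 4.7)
The plan is to unwind the forcing semantics of the nested external quantifiers and reduce the statement to the triviality that a constant map at a fixed element of $T$ is globally standard. Concretely, by Lemma \ref{lem:quantifiers}(a) applied twice, it suffices to show that for an arbitrary filter $\mathcal{F}$, every $f \in T^S$ and every $x \in S$ (identified with global sections $f!: \mathcal{F} \to \mathbf{y}(T^S)$ and $x!: \mathcal{F} \to \mathbf{y}S$),
\begin{equation*}
  \mathcal{F} \Vdash \mathrm{st}_T\bigl((f!)(x!)\bigr).
\end{equation*}

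Next I would note that $(f!)(x!)$ is obtained by composing the pair $(f!, x!): \mathcal{F} \to \mathbf{y}(T^S) \times \mathbf{y}S$ with $\mathbf{y}(\mathrm{ev}): \mathbf{y}(T^S \times S) \to \mathbf{y}T$, and by functoriality this is the constant morphism $\bigl(\mathrm{ev}(f,x)\bigr)! = f(x)!: \mathcal{F} \to \mathbf{y}T$ on the element $f(x) \in T$.

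Finally, by the lemma characterising the forcing semantics of $\mathrm{st}_T$, it is enough to exhibit a $K$-cover $\{\beta_k : \mathcal{G}_k \to \mathcal{F}\}_{k=1}^n$ and elements $y_1, \ldots, y_n \in T$ such that the induced composites equal $y_k!$. Taking the singleton cover $\{\mathrm{id}_\mathcal{F}\}$ and $y_1 := f(x)$ does the job, since $f(x)! \circ \mathrm{id}_\mathcal{F} = f(x)!$.

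There is no real obstacle: the result is essentially a bookkeeping exercise in the forcing semantics. The only point that requires a moment's care is the second step, checking that external instantiation of $\forallst f$ and $\forallst x$ commutes correctly with the interpretation of the term $f(x)$ so that one genuinely obtains the constant map on $f(x)$ rather than a more complicated section; but this follows immediately from the naturality of the Yoneda embedding and the fact that $\Delta$ sends an element of $T$ precisely to its associated constant section.
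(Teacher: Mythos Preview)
Your proof is correct and follows essentially the same approach as the paper: apply Lemma~\ref{lem:quantifiers}(a) twice to reduce to checking $\mathrm{st}_T$ on the constant section, identify $(f!)(x!)$ with $f(x)!$, and observe that a constant map is standard. The only cosmetic difference is that the paper works at the terminal object (sufficient since the formula is a sentence) and simply asserts that $f(x)!$ is ``clearly standard'', whereas you spell out the identity cover explicitly.
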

\begin{proof}
By Lemma \ref{lem:quantifiers}, $\Vdash \forallst f:S\to T \, \forallst x:S \; \mathrm{st}_T(f(x))$ if and only if, for all $f \in (S \to T)$, and $x \in S$,
\begin{equation*}
	\Vdash \mathrm{st}_T(f!(x!))\;.
\end{equation*}
But $\llbracket f!(x!) \rrbracket = \llbracket f(x)! \rrbracket$, and the latter is clearly standard.
\end{proof}

\begin{prop}
The external induction schema $\mathsf{IA}^\mathrm{st}$ holds in $\mathcal{N}$.
\end{prop}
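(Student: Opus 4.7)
The plan is to unfold the statement via Kripke--Joyal semantics and reduce it to ordinary induction in the metatheory. Fix an external formula $\Phi(x)$ (possibly with additional parameters from some filter), and let $\mathcal{F}$ be an arbitrary filter. By the forcing clause for implication, to establish $\mathcal{F} \Vdash \mathsf{IA}^\mathrm{st}$ it suffices to fix a morphism $\mathcal{G} \to \mathcal{F}$, assume
\begin{equation*}
  \mathcal{G} \Vdash \Phi(0) \land \forallst x{:}0\,(\Phi(x) \to \Phi(\mathrm{S}x))\;,
\end{equation*}
and deduce $\mathcal{G} \Vdash \forallst x{:}0\,\Phi(x)$.

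The key step is to apply Lemma \ref{lem:quantifiers}(a) twice. On the conclusion side, it translates $\mathcal{G} \Vdash \forallst x{:}0\,\Phi(x)$ into the metatheoretic statement that $\mathcal{G} \Vdash \Phi(n!)$ for every $n \in \mathbb{N}$, where $n!: 1 \to \mathbb{N}$ is the global element picking out $n$. On the hypothesis side, the step assumption $\mathcal{G} \Vdash \forallst x{:}0\,(\Phi(x) \to \Phi(\mathrm{S}x))$ becomes: for every $n \in \mathbb{N}$, $\mathcal{G} \Vdash \Phi(n!) \to \Phi((n+1)!)$. Instantiating the forcing clause for implication at $\mathcal{H} = \mathcal{G}$, this in turn yields the metatheoretic implication that $\mathcal{G} \Vdash \Phi(n!)$ entails $\mathcal{G} \Vdash \Phi((n+1)!)$.

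With these translations in hand, I would finish by ordinary induction on $n$ in the metatheory. The base case is the first conjunct of the hypothesis, noting that $\llbracket 0 \rrbracket$ is precisely the global element $0!$; the step case is the implication just derived. Hence $\mathcal{G} \Vdash \Phi(n!)$ for every $n \in \mathbb{N}$, and a final appeal to Lemma \ref{lem:quantifiers}(a) gives $\mathcal{G} \Vdash \forallst x{:}0\,\Phi(x)$.

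I do not expect any genuine obstacle here: the substantive content was already packaged into Lemma \ref{lem:quantifiers}(a), which shows that standard universal quantification in $\mathcal{N}$ reduces faithfully to metatheoretic quantification over $\mathbb{N}$, so topos-internal induction collapses to metatheoretic induction. The situation contrasts sharply with $\existst$, whose forcing semantics forces the introduction of finite $K$-covers and thus herbrandised witnesses; for $\forallst$ the reduction is entirely transparent, which is exactly why external induction transfers at no cost.
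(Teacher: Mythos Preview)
Your proposal is correct and follows essentially the same approach as the paper: unfold the hypothesis via Lemma~\ref{lem:quantifiers}(a), reduce to a metatheoretic implication from $\mathcal{G} \Vdash \Phi(n!)$ to $\mathcal{G} \Vdash \Phi((n{+}1)!)$, and conclude by ordinary induction in the metatheory. If anything, you are slightly more careful than the paper in explicitly passing to an arbitrary $\mathcal{G} \to \mathcal{F}$ for the outer implication and in noting that the forcing clause for $\to$ must be instantiated at the identity to extract the metatheoretic step.
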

\begin{proof}
Let $\Phi(x,n)$ be an external formula, with $x:S$ and $n:\mathbb{N}$, $\mathcal{F}$ a filter, and $\alpha \in \llbracket S \rrbracket \mathcal{F}$. Suppose
\begin{equation*}
	\mathcal{F} \Vdash \Phi(\alpha, 0!) \land \forallst n:\mathbb{N} \, (\Phi(\alpha, n) \to \Phi(\alpha, \mathrm{S}n) )\;.
\end{equation*}
Then, by Lemma \ref{lem:quantifiers}, we have that $\mathcal{F} \Vdash \Phi(\alpha, 0!)$ and that, for all $n \in \mathbb{N}$, $\mathcal{F} \Vdash \Phi(\alpha, n!)$ implies $\mathcal{F} \Vdash \Phi(\alpha, \mathrm{S}n!) $. By induction in the metatheory, we obtain that, for all $n \in \mathbb{N}$,
\begin{equation*}
	\mathcal{F} \Vdash \Phi(\alpha, n!)\;,
\end{equation*}
so, again by the semantics of the external quantifiers, $\mathcal{F} \Vdash \forallst n:\mathbb{N} \, \Phi(\alpha, n)$.
\end{proof}

Lemma \ref{lem:quantifiers} has also the following easy consequence.
\begin{cor} \label{cor:trforallexists}
Let $\varphi$ be an internal formula. Then $\Vdash \forallst x:S \, \existst y:T \, \varphi(x,y)$ if and only if it is true that $\forall x \in S \, \exists y \in T \, \varphi(x,y)$. Equivalently, the rule
\begin{align*}
 	& \mathsf{TR}_{\forall\exists}: \quad \begin{array}{c} \forall x:\sigma \, \exists y:\tau \, \varphi(x,y) \\ \hline \forallst x:\sigma \, \existst y:\tau \, \varphi(x,y)  \end{array}\;.
\end{align*}
holds in $\mathcal{N}$.
\end{cor}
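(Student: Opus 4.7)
My plan is to peel off the external quantifiers using the two clauses of Lemma \ref{lem:quantifiers} and then invoke the transfer theorem, since what remains after peeling is an internal formula.

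For the forward direction, assume $\Vdash \forallst x:S \, \existst y:T \, \varphi(x,y)$. First I would apply Lemma \ref{lem:quantifiers}(a) at the terminal filter to reduce this to: for every $x \in S$, $\Vdash \existst y:T \, \varphi(x!, y)$. Then Lemma \ref{lem:quantifiers}(b) (the ``equivalently'' clause) gives, for each such $x$, a finite sequence $t \in T^*$ with $\Vdash \exists y \in t! \, \varphi(x!, y)$. The formula $\exists y \in t \, \varphi(x, y)$ is internal (it unfolds into a finite disjunction over the entries of $t$), so by the transfer theorem it holds in $\mathcal{N}$ iff it is true in the metatheory; this yields some $y \in T$ with $\varphi(x, y)$, as required.

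The backward direction is symmetric and slightly easier: given $x \in S$, pick a witness $y_x \in T$ with $\varphi(x, y_x)$ and take $t := \langle y_x \rangle$; then $\exists y \in t \, \varphi(x, y)$ is true in the metatheory, so by transfer it is forced, and Lemma \ref{lem:quantifiers}(b) followed by Lemma \ref{lem:quantifiers}(a) lifts this to $\Vdash \forallst x:S \, \existst y:T \, \varphi(x,y)$. For the equivalence with the rule $\mathsf{TR}_{\forall\exists}$, note that $\forall x:S \, \exists y:T \, \varphi(x,y)$ is itself internal, so by Theorem \ref{thm:transfer} it is forced iff true in the metatheory, and then the equivalence just established hands over the conclusion.

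There is no real obstacle here; the only thing to be careful about is to observe that once both external quantifiers are stripped, one is left with an internal formula to which the transfer theorem applies, and that the ``$t$'' appearing in Lemma \ref{lem:quantifiers}(b) may depend on $x$ but that is harmless because we only need to extract a single witness $y \in T$ per $x$ in the metatheory.
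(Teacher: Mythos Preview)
Your proof is correct and is exactly the argument the paper intends: it states the corollary as an ``easy consequence'' of Lemma~\ref{lem:quantifiers}, and your two-step peeling (Lemma~\ref{lem:quantifiers}(a) for $\forallst$, then Lemma~\ref{lem:quantifiers}(b) for $\existst$) followed by an appeal to Theorem~\ref{thm:transfer} on the residual internal formula is precisely how one unpacks that. One tiny stylistic remark: $\exists y \in t\,\varphi(x,y)$ is, as a formula, a bounded existential $\exists i < |t|\,\varphi(x,t_i)$ rather than literally a finite disjunction, but this is of course still internal, so your use of transfer is justified.
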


We have, by now, a good picture of the semantics of first order logic in the filter topos $\mathcal{N}$. In the next section, we will deal with the characteristic principles of nonstandard Dialectica.

\subsection{Characteristic principles}
For the results in this section, we cannot take much credit, since a characterisation of first order logic in the topoi $\mathrm{Sh}(\mathfrak{F}\mathbf{C}, K)$, with $\mathbf{C}$ coherent, has already been provided by Butz \cite[Proposition 4.5]{butz2004saturated}, albeit with a different aim and formalism. The choice of principles, however, is different, due to our focus on nonstandard arithmetic; moreover, it will allow us to see \emph{herbrandisation} ``in action'', once we decide to ``de-herbrandise'' in the following section.

We start from the truly nonstandard principles, sequence overspill and underspill.

\begin{prop}
The principle $\mathsf{OS}^*$ holds in $\mathcal{N}$.
\end{prop}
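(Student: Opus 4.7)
The plan is to show, for every filter $\mathcal{F} = (C, \mathcal{F}_I)$ and every interpretation $\vec{\alpha}$ of the ambient parameters of $\varphi$ at $\mathcal{F}$, that $\mathcal{F} \Vdash \forallst s:\sigma^* \, \varphi(s, \vec{\alpha})$ implies $\mathcal{F} \Vdash \existshyp s:\sigma^* \, \varphi(s, \vec{\alpha})$, by constructing a single-morphism $K$-cover $\pi_1: \mathcal{G} \to \mathcal{F}$ together with a candidate witness $\pi_2: \mathcal{G} \to \sigma^*$. Using Lemma \ref{lem:quantifiers}.(a) together with Theorem \ref{thm:transfer}, the hypothesis unpacks to: for every $s \in \sigma^*$ in the metatheory there exists $i_s \in I$ such that $\varphi(s, \vec{\alpha}(u))$ holds for all $u \in \mathcal{F}_{i_s}$.

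Next I will define $\mathcal{G}$ as the filter on $C \times \sigma^*$ whose base objects, indexed by $(i,t) \in I \times \sigma^*$, are
\[ W_{i,t} := \{(u, s') \in \mathcal{F}_i \times \sigma^* : t \subseteq s' \text{ and } \varphi(s', \vec{\alpha}(u))\}, \]
and verify that the $W_{i,t}$ form a filter base: intersections close up via $W_{i_1,t_1} \cap W_{i_2,t_2} \supseteq W_{j, t_1 \cdot t_2}$ for any $j$ with $\mathcal{F}_j \leq \mathcal{F}_{i_1} \wedge \mathcal{F}_{i_2}$, and nonemptiness follows from the unpacked hypothesis applied at $s = t$, giving $\mathcal{F}_j \times \{t\} \subseteq W_{i,t}$ whenever $\mathcal{F}_j \leq \mathcal{F}_i \wedge \mathcal{F}_{i_t}$.

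Then I will check that the first projection $\pi_1: \mathcal{G} \to \mathcal{F}$ is continuous (since $W_{i,\langle\rangle} \subseteq \pi_1^{-1}(\mathcal{F}_i)$) and epi (since $\pi_1(W_{i,t}) \supseteq \mathcal{F}_j$ for $\mathcal{F}_j \leq \mathcal{F}_i \wedge \mathcal{F}_{i_t}$, so the image filter on $C$ collapses back to $\mathcal{F}_I$), whence $\{\pi_1\}$ is a singleton $K$-cover. Taking $\pi_2$ to be the second projection into the simple filter on $\sigma^*$, I will verify at $\mathcal{G}$ the conjunction $\mathrm{hyper}(\pi_2) \land \varphi(\pi_2, \vec{\alpha}\pi_1)$: by Lemma \ref{lem:quantifiers}.(a), the first conjunct reduces to $\mathcal{G} \Vdash x! \in \pi_2$ for every $x \in \sigma$, which is an internal statement witnessed by the base object $W_{i, \langle x\rangle}$ via Theorem \ref{thm:transfer}; while the second conjunct is directly witnessed by $W_{i, \langle\rangle}$, since membership in that set is by definition the condition $\varphi(s', \vec{\alpha}(u))$. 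Kripke--Joyal semantics for $\exists$ via the cover $\pi_1$ and witness $\pi_2$ then yields $\mathcal{F} \Vdash \existshyp s:\sigma^* \, \varphi(s, \vec{\alpha})$.

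The delicate point, and the place where the nonstandard content of the hypothesis is fully consumed, is that $\varphi$ must be baked into the base objects $W_{i,t}$ so that the witness clause above admits a \emph{uniform} choice on a single base object; the auxiliary sequence factor $\sigma^*$ then supplies the ``generic'' hyperfinite enumeration at $\mathcal{G}$ precisely because of how the filter basis $\{s' : t \subseteq s'\}$ is organised.
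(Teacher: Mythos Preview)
Your proposal is correct and follows essentially the same approach as the paper: the filter $\mathcal{G}$ on $C \times \sigma^*$ with base objects $W_{i,t} = \{(u,s') : u \in \mathcal{F}_i,\ t \subseteq s',\ \varphi(s',\vec{\alpha}(u))\}$ is exactly the paper's construction, and the verifications of the filter condition, of $\mathrm{hyper}(\pi_2)$ via the base object indexed by $\langle x\rangle$, of $\varphi$ by construction, and of $\pi_1$ being covering are all carried out in the same way. You are slightly more explicit than the paper in checking nonemptiness of the base objects and continuity of $\pi_1$, but the argument is the same.
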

\begin{proof}
Let $\varphi(y,s)$ be an internal formula, with variables $y: T$, and $s: S^*$, $(C, \mathcal{F}_I)$ an arbitrary filter, $\alpha \in \llbracket T \rrbracket \mathcal{F}$, and assume
\begin{equation*}
	\mathcal{F} \Vdash \forallst s:S^* \, \varphi(\alpha, s)\;.
\end{equation*}
By Lemma \ref{lem:quantifiers}, for all $s \in S^*$, $\mathcal{F} \Vdash \varphi(\alpha, s!)$; by transfer (Theorem \ref{thm:transfer}), for all $s \in S^*$, there exists $i \in I$ such that, for all $u \in \mathcal{F}_i$,
\begin{equation*}
	\varphi(\alpha(u), s)\;.
\end{equation*}
Define a filter $(C \times S^*, \mathcal{G}_{I \times S^*})$, as follows: for all $i \in I$, $t \in S^*$,
\begin{equation*}
	\mathcal{G}_{(i, t)} := \{ (u, s) \,|\, u \in \mathcal{F}_i \land t \subseteq s \land \varphi(\alpha(u), s) \}\;.
\end{equation*}
The filter condition is easily checked: given $\mathcal{G}_{(i, t)}$, $\mathcal{G}_{(j, t')}$, pick $k \in I$ such that $\mathcal{F}_k \subseteq \mathcal{F}_i \cap \mathcal{F}_j$, and $t'' := t\cdot t'$; then, $\mathcal{G}_{(k, t'')} \subseteq \mathcal{G}_{(i, t)} \cap \mathcal{G}_{(j, t')}$.

The projections $\pi_1 : \mathcal{G} \to \mathcal{F}$, and $\pi_2 : \mathcal{G} \to S^*$ are clearly continuous. We now check
\begin{equation*}
	\mathcal{G} \Vdash \mathrm{hyper}(\pi_2)\;.
\end{equation*}
By definition, this means $\mathcal{G} \Vdash \forallst x:S \, (x \in \pi_2)$; equivalently, for all $x \in S$, $\mathcal{G} \Vdash x! \in \pi_2$. By transfer, it suffices to prove that, for all $x \in S$, there exists $(i,t) \in I \times S^*$, such that for all $u \in \mathcal{F}_i$, and $s \supseteq t$, it holds that $x \in s$; so we can take $t := \langle x \rangle$, and $i \in I$ arbitrary.

Furthermore, $\mathcal{G} \Vdash \varphi(\alpha\pi_1, \pi_2)$ holds by construction. Hence, in order to derive that
\begin{equation*}
	\mathcal{F} \Vdash \exists s:S^* \, (\mathrm{hyper}(s) \land \varphi(\alpha, s))\;,
\end{equation*}
it remains to show that $\pi_1$ is covering. Let $\mathcal{G}_{(i,t)}$ be an arbitrary base set of $\mathcal{G}$. By the assumption, we can find $j \in I$ such that, for all $u \in \mathcal{F}_j$, $\varphi(\alpha(u), t)$; then, if we choose $k \in I$ such that $\mathcal{F}_k \subseteq \mathcal{F}_i \cap \mathcal{F}_j$, we have that
\begin{equation*}
	\mathcal{F}_k \subseteq \pi_1 \,\mathcal{G}_{(i,t)}\;.
\end{equation*}
This concludes the proof.
\end{proof}

\begin{lem}
Let $\Phi(x)$ be an external formula, $x:S$, such that
\begin{equation} \label{eq:ushyp}
	\Vdash \exists x: S \, \Phi(x)\;.
\end{equation}
Then
\begin{equation*}
	\forall y: T \, \big(\forall x: S \, (\Phi(x) \to \varphi(y, x)) \to \existst x:S \, \varphi(y, x) \big)
\end{equation*}
holds in $\mathcal{N}$ for all internal formulae $\varphi$.
\end{lem}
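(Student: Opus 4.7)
The plan is to exploit the globally existing witness for $\Phi$, together with the product structure of the filter site, to extract a standard witness for $\varphi$. By the semantics of $\forall y$, it suffices to fix a filter $\mathcal{F}$ and a section $\beta \in \llbracket T \rrbracket \mathcal{F}$, assume $\mathcal{F} \Vdash \forall x:S\,(\Phi(x) \to \varphi(\beta, x))$, and derive $\mathcal{F} \Vdash \existst x:S\,\varphi(\beta, x)$. The hypothesis $\Vdash \exists x:S\,\Phi(x)$, via the sheaf semantics of $\exists$, supplies a finite $K$-cover $\{\mathcal{G}_k \to 1\}_{k=1}^n$ together with sections $\gamma_k \in \llbracket S \rrbracket \mathcal{G}_k$ such that $\mathcal{G}_k \Vdash \Phi(\gamma_k)$; since a $K$-cover of $1$ cannot consist entirely of zero filters, at least one of the $\mathcal{G}_k$ must be proper, and I would fix such a $\mathcal{G}_1$.

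Next I would pass to the product filter $\mathcal{H} := \mathcal{F} \times \mathcal{G}_1$ with its projections $\pi_1$ and $\pi_2$. By restriction of $\mathcal{G}_1 \Vdash \Phi(\gamma_1)$ along $\pi_2$, we get $\mathcal{H} \Vdash \Phi(\gamma_1 \pi_2)$; pulling the hypothesis back to $\mathcal{H}$ along $\pi_1$ and specialising the $\forall x$ at the section $\gamma_1 \pi_2 : \mathcal{H} \to S$ then yields
\[
  \mathcal{H} \Vdash \varphi(\beta \pi_1, \gamma_1 \pi_2).
\]
Since $\varphi$ is internal, Theorem~\ref{thm:transfer} provides a base element of $\mathcal{H}$ on which $\varphi(\beta \pi_1, \gamma_1 \pi_2)$ holds pointwise; crucially, by the explicit description of products of filters, such a base element has the product form $\mathcal{F}_i \times (\mathcal{G}_1)_j$.

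The decisive move is to use the properness of $\mathcal{G}_1$ to pick a single point $w_0 \in (\mathcal{G}_1)_j$ and set $x := \gamma_1(w_0) \in S$. This collapses the potentially non-constant section $\gamma_1 \pi_2$ into a genuine standard element: the pointwise condition $\varphi(\beta(u), \gamma_1(w))$ on $\mathcal{F}_i \times (\mathcal{G}_1)_j$, restricted to $w = w_0$, becomes the uniform condition $\varphi(\beta(u), x)$ on the base element $\mathcal{F}_i$ of $\mathcal{F}$. A second application of Theorem~\ref{thm:transfer}, this time read at $\mathcal{F}$, gives $\mathcal{F} \Vdash \varphi(\beta, x!)$, and by Lemma~\ref{lem:quantifiers}(b) this provides precisely the standard witness required for $\existst x:S\,\varphi(\beta, x)$.

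I expect the main obstacle to be the genuine gap between internal and external existence: an internal existential witness at $\mathcal{F}$ is just a continuous section $\mathcal{F} \to S$, which in general is not constant and therefore does not single out any particular element of $S$. The detour through the product $\mathcal{F} \times \mathcal{G}_1$ is what separates the two directions of variation and lets one ``freeze'' the $\mathcal{G}_1$-coordinate at a single point, converting the continuous witness $\gamma_1 \pi_2$ into a standard element while preserving the uniform validity of the internal formula $\varphi$ over the $\mathcal{F}$-factor; this is the only substantive step, the rest being bookkeeping around Theorem~\ref{thm:transfer} and the forcing clauses for $\forall$ and $\exists$.
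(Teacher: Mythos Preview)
Your argument is correct and follows essentially the same route as the paper: pass to a product filter with a witnessing $\mathcal{G}_k$, use monotonicity and the forcing clause for implication to obtain $\mathcal{F}\times\mathcal{G}_k \Vdash \varphi(\beta\pi_1,\gamma_k\pi_2)$, apply the transfer theorem to get pointwise validity on a product base set, and then evaluate at a single point of the $\mathcal{G}_k$-factor to produce a standard witness over a base set of $\mathcal{F}$.

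The only difference is organisational. The paper carries all $n$ members of the cover $\{\mathcal{G}_k \to 1\}$ through to the end, obtaining base sets $\mathcal{F}_{i_k}$ and $\mathcal{G}_{k,j_k}$ for each $k$, and only then uses the covering condition to pick some $x \in \bigcup_k \sigma_k\mathcal{G}_{k,j_k}$ (intersecting all the $\mathcal{F}_{i_k}$ along the way, which is in fact more than necessary). You instead observe upfront that at least one $\mathcal{G}_k$ must be proper, fix that one, and work with it alone; properness then guarantees that the chosen base set $(\mathcal{G}_1)_j$ is nonempty when you need $w_0$. This is a genuine streamlining, but not a different proof idea.
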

\begin{proof}
Let $\mathcal{F}$ be any filter, $\varphi(y, x)$ an internal formula, $y:T$, and $\alpha \in \llbracket T \rrbracket \mathcal{F}$. Suppose $\mathcal{F} \Vdash \forall x:S \, (\Phi(x) \to \varphi(\alpha, x))$; equivalently,
\begin{equation} \label{eq:ushyp_2}
	\mathcal{F} \times S \Vdash \Phi(\pi_2) \to \varphi(\alpha\pi_1, \pi_2)\;.
\end{equation}

Assume (\ref{eq:ushyp}). Then, there exist a cover $\{\mathcal{G}_k \to 1\}_{k=1}^n$, and elements $\sigma_k \in \llbracket S \rrbracket\mathcal{G}_k$, $k=1,\ldots,n$, such that
\begin{equation*}
	\mathcal{G}_k \Vdash \Phi(\sigma_k)\;, \quad k = 1,\ldots, n\;.
\end{equation*}
By our interpretation of the type $S$, the $\sigma_k$ correspond to morphisms $\sigma_k: \mathcal{G}_k \to S$ in $\mathfrak{F}\mathbf{Set}$; by monotonicity of the forcing relation, we obtain
\begin{equation*}
	\mathcal{F} \times \mathcal{G}_k \Vdash \Phi(\sigma_k\pi_2)\;,
\end{equation*}
which, by the commutativity of the diagrams
\begin{equation*}
\begin{tikzcd}
	\mathcal{F} \times \mathcal{G}_k \arrow{r}{\pi_2} \arrow{d}{\mathrm{id} \times \sigma_k} & \mathcal{G}_k \arrow{d}{\sigma_k} \\
	\mathcal{F} \times S \arrow{r}{\pi_2} & S \;,
\end{tikzcd}
\end{equation*}
for $k = 1,\ldots,n$, is the same as $\mathcal{F} \times \mathcal{G}_k \Vdash \Phi(\pi_2(\mathrm{id} \times \sigma_k))$.

Therefore, from (\ref{eq:ushyp_2}), it follows, by monotonicity, that
\begin{equation*}
	\mathcal{F} \times \mathcal{G}_k \Vdash \varphi(\alpha\pi_1, \sigma_k\pi_2)\;;
\end{equation*}
by transfer, for all $k = 1, \ldots, n$, there exist base sets $\mathcal{F}_{i_k}$ of $\mathcal{F}$, $\mathcal{G}_{k, j_k}$ of $\mathcal{G}_k$, such that for all $u \in \mathcal{F}_{i_k}$, and $v \in \mathcal{G}_{k,j_k}$, it holds that $\varphi(\alpha(u), \sigma_k(v))$.

Now, since the $\mathcal{G}_k$ cover 1, there exists some $x \in \sigma_1\mathcal{G}_{1,j_1} \cup \ldots \cup \sigma_n\mathcal{G}_{n,j_n}$. For such an $x$, taking $\mathcal{F}_i \subseteq \mathcal{F}_{i_1} \cap \ldots \cap \mathcal{F}_{i_n}$, and using transfer,
\begin{equation*}
	\mathcal{F} \Vdash \varphi(\alpha, x!)\;;
\end{equation*}
hence $\mathcal{F} \Vdash \existst x :S \, \varphi(\alpha, x)$, which was to be proved.
\end{proof}

\begin{prop}
The principle $\mathsf{US}^*$ holds in $\mathcal{N}$.
\end{prop}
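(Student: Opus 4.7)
The plan is to apply the preceding lemma with the type $S$ replaced by $S^*$ and the external formula $\Phi(s) := \mathrm{hyper}(s)$. Unwinding the conclusion of that lemma with these substitutions gives, for any internal $\varphi$ and any parameter type $T$,
\begin{equation*}
	\forall y:T\, \big(\forall s:S^* \,(\mathrm{hyper}(s) \to \varphi(y,s)) \to \existst s:S^* \, \varphi(y,s)\big)\;,
\end{equation*}
which is precisely $\mathsf{US}^*$ allowing an extra parameter (absorbing any free variables of $\varphi$). So the entire content of the proof is to verify the lemma's hypothesis, namely
\begin{equation*}
	\Vdash \exists s:S^* \, \mathrm{hyper}(s)\;.
\end{equation*}

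To produce such a generic hyperfinite enumeration, I would construct a filter $\mathcal{H} = (S^*, \{\mathcal{H}_t\}_{t \in S^*})$ on the object $S^*$, where
\begin{equation*}
	\mathcal{H}_t := \{ s \in S^* \,|\, t \subseteq s \}\;.
\end{equation*}
The filter-base condition is immediate: given $\mathcal{H}_t$ and $\mathcal{H}_{t'}$, the index $t\cdot t'$ satisfies $\mathcal{H}_{t \cdot t'} \subseteq \mathcal{H}_t \cap \mathcal{H}_{t'}$, since any $s$ extending $t\cdot t'$ extends both. Let $\sigma: \mathcal{H} \to S^*$ be the morphism represented by the identity on $S^*$.

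Next I would check that $\mathcal{H} \Vdash \mathrm{hyper}(\sigma)$, i.e.\ that for every $x \in S$, $\mathcal{H} \Vdash x! \in \sigma$. By Theorem \ref{thm:transfer}, this amounts to finding, for each $x$, an index $t \in S^*$ such that $x \in s$ holds for every $s \in \mathcal{H}_t$; the choice $t := \langle x \rangle$ works, since $\mathcal{H}_{\langle x \rangle} = \{s : x \in s\}$. Finally, the unique morphism $\mathcal{H} \to 1$ is a $K$-cover of $1$: each base object $\mathcal{H}_t$ is inhabited (it contains $t$ itself), so the singleton family $\{\mathcal{H} \to 1\}$ has image all of $1$. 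By Lemma \ref{lem:quantifiers}.(b), this means
\begin{equation*}
	\Vdash \exists s:S^* \, \mathrm{hyper}(s)\;,
\end{equation*}
and the preceding lemma then delivers $\mathsf{US}^*$ at every filter $\mathcal{F}$.

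The only real obstacle is getting the filter $\mathcal{H}$ right: one must ensure both that its generic point is forced hyperfinite (which pins down the shape of the base objects $\mathcal{H}_t$), and that the map to $1$ is $K$-covering (which requires every base object to be inhabited, ruling out, for example, a naive ``all of $S$'' generator when $S$ is empty — but in the empty case $\mathrm{hyper}(\langle\rangle)$ is true at $1$ directly, so the argument still goes through). Everything else is a mechanical application of the transfer theorem and the preceding lemma.
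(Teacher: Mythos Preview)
Your proof is correct and follows the same strategy as the paper: apply the preceding lemma with $\Phi(s) := \mathrm{hyper}(s)$. The only difference is in how you verify the hypothesis $\Vdash \exists s:S^*\,\mathrm{hyper}(s)$. The paper simply cites the already-established $\mathsf{OS}^*$ (sequence overspill immediately yields hyperfinite enumerations of any type), whereas you build the witnessing filter $\mathcal{H}$ by hand --- in effect redoing, in simplified form, the construction from the proof of $\mathsf{OS}^*$. Both are fine; the paper's route is shorter since the work has already been done.

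One minor slip: your appeal to Lemma~\ref{lem:quantifiers}(b) is misplaced, as that clause is about $\existst$, not $\exists$. What you are actually using at that step is the ordinary Kripke--Joyal forcing clause for the internal existential quantifier (a cover plus local witnesses), which is precisely how the preceding lemma itself unpacks the hypothesis.
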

\begin{proof} Follows from the previous lemma, by taking $\Phi(s) := \mathrm{hyper}(s)$, and using for condition (\ref{eq:ushyp}) the fact that, by sequence overspill, hyperfinite enumerations of any type exist in $\mathcal{N}$.
\end{proof}

Given sequence overspill and underspill, one can adapt the proofs of the first section to show that other principles, including idealisation and the herbrandised generalised Markov's principle, hold in $\mathcal{N}$. However, one should pay attention to the fact that, while finite types were all inhabited, and actually had infinitely many elements, in this context a type $S$ can be finite, or even empty. So, for instance, the implication $\mathsf{OS}^* \to \mathsf{OS}$ only holds for types with infinitely many elements: by definition of standardness, a finite set has only standard elements.

Next, we deal with two characteristic principles of nonstandard Dialectica, whose validity in the filter topoi is independent of the metatheory.

\begin{prop}
The principle $\mathsf{NCR}$ holds in $\mathcal{N}$.
\end{prop}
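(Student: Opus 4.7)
The plan is to use the ``generic element'' of $T$ to extract a globally defined finite sequence of witnesses via Lemma \ref{lem:quantifiers}(b), and then check that this sequence works for every local value of $y$ by pulling back along an arbitrary test map.

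Let $\Phi(x,y)$ be external with $x:\sigma$ and $y:\tau$, let $\mathcal{F}$ be a filter, and suppose $\mathcal{F} \Vdash \forall y:\tau\, \existst x:\sigma\, \Phi(x,y)$. I would first instantiate the universal at the second projection of the product $\mathcal{F} \times \tau$, viewed as a generic element $\pi_2: \mathcal{F}\times\tau \to \tau$; by the forcing clause for $\forall$ this yields
\begin{equation*}
\mathcal{F}\times\tau \;\Vdash\; \existst x:\sigma\, \Phi(x,\pi_2).
\end{equation*}
Lemma \ref{lem:quantifiers}(b) applied to this external formula then produces a $K$-cover $\{\beta_k:\mathcal{H}_k \to \mathcal{F}\times\tau\}_{k=1}^n$ and elements $x_1,\ldots,x_n \in \sigma$ such that $\mathcal{H}_k \Vdash \Phi(x_k!,\pi_2\beta_k)$ for each $k$. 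The natural candidate witness is the standard sequence $s := \langle x_1,\ldots,x_n\rangle \in \sigma^*$, whose interpretation $s!$ is a global, hence standard, element of $\llbracket \sigma^*\rrbracket$ containing each $x_k!$.

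It remains to verify $\mathcal{F} \Vdash \forall y:\tau\, \exists x \in s!\, \Phi(x,y)$. Take an arbitrary $\gamma:\mathcal{K}\to\mathcal{F}$ and arbitrary $\eta \in \llbracket\tau\rrbracket\mathcal{K}$; pair them into $\langle\gamma,\eta\rangle:\mathcal{K} \to \mathcal{F}\times\tau$ and pull back the cover $\{\beta_k\}$ along this pairing. Stability of the precanonical topology under pullback yields a $K$-cover $\{\beta_k':\mathcal{H}_k' \to \mathcal{K}\}_{k=1}^n$ together with mediating maps $\widetilde{\beta}_k: \mathcal{H}_k' \to \mathcal{H}_k$ satisfying $\pi_2\beta_k\widetilde{\beta}_k = \eta\beta_k'$, so monotonicity of forcing gives $\mathcal{H}_k' \Vdash \Phi(x_k!,\eta\beta_k')$. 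Since $x_k$ literally occurs in $s$, the internal statement $x_k! \in s!$ is forced everywhere by Theorem \ref{thm:transfer}, so each $\mathcal{H}_k'$ forces $\exists x \in s!\, \Phi(x,\eta\beta_k')$; gluing along the cover delivers $\mathcal{K} \Vdash \exists x \in s!\, \Phi(x,\eta)$, as required.

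The main obstacle is essentially bookkeeping: one must confirm that Lemma \ref{lem:quantifiers}(b) applies to genuinely external $\Phi$ (it does, as stated), and that pulling back along $\langle\gamma,\eta\rangle$ transports the standard witnesses $x_k$ coherently. The crucial conceptual point is that \emph{herbrandisation} emerges automatically from the finiteness of $K$-covers in the precanonical topology: the principle cannot be strengthened to a single standard witness, because the cover of $\mathcal{F}\times\tau$ extracted from the $\existst$ premise is only finite, not singleton.
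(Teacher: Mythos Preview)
Your proof is correct and follows the same strategy as the paper: pass to the product $\mathcal{F}\times\tau$ and apply Lemma~\ref{lem:quantifiers}(b) at the generic element $\pi_2$. The only difference is that you invoke the \emph{first} equivalent in Lemma~\ref{lem:quantifiers}(b) (cover plus individual witnesses $x_1,\ldots,x_n$) and then manually reassemble the sequence and verify it by pulling back along an arbitrary $\langle\gamma,\eta\rangle$, whereas the paper invokes the \emph{second} equivalent directly: there exists $s\in\sigma^*$ with $\mathcal{F}\times\tau \Vdash \exists x\in s!\,\Phi(\alpha\pi_1,x,\pi_2)$, which is literally the forcing clause for $\mathcal{F}\Vdash \forall y\,\exists x\in s!\,\Phi(\alpha,x,y)$, so no further pullback argument is needed. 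Your verification step is thus redoing work already packaged inside the ``equivalently'' clause of the lemma.
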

\begin{proof}
Let $\mathcal{F}$ be any filter, $\Phi(z,x,y)$ an external formula, $x:S, y:T, z:U$, and $\alpha \in \llbracket U \rrbracket \mathcal{F}$. Assume $\mathcal{F} \Vdash \forall y:T \, \existst x:S \, \Phi(\alpha, x, y)\;$, or, equivalently,
\begin{equation*}
	\mathcal{F} \times T \Vdash \existst x:S \, \Phi(\alpha\pi_1, x, \pi_2)\;.
\end{equation*}
By the semantics of the $\existst$ quantifier in $\mathcal{N}$, this means that there exists $s \in S^*$ such that
\begin{equation*}
	\mathcal{F} \times T \Vdash \exists x\in s! \, \Phi(\alpha\pi_1, x, \pi_2)\;;
\end{equation*}
equivalently, since $s! = s!\pi_1$, $\mathcal{F} \Vdash \forall y:T \, \exists x \in s! \, \Phi(\alpha, x, y)$. Therefore,
\begin{equation*}
	\mathcal{F} \Vdash \existst s:S^* \, \forall y:T \, \exists x\in s! \, \Phi(\alpha, x, y)\;. \qedhere
\end{equation*}
\end{proof}

The next proof is a variant of one by Butz \cite{butz2004saturated}. It utilises the following, general result about Grothendieck topoi. Here, $\mathbf{a}$ is the sheafification functor.
\begin{lem} \label{lem:frommaclaneandmoerdijk}
Let $(\mathbf{C}, J)$ be a site. A set $\{f_i : C_i \to C\}_{i\in I}$ of morphisms of $\mathbf{C}$ is $J$-covering if and only if the set $\{\mathbf{ay}f_i: \mathbf{ay}C_i \to \mathbf{ay}C\}_{i \in I}$ is jointly epimorphic in $\mathrm{Sh}(\mathbf{C},J)$.
\end{lem}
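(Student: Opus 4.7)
The plan is to deduce both directions from the standard characterisation of joint epimorphisms in a Grothendieck topos: a family $\{g_i : F_i \to F\}_{i\in I}$ in $\mathrm{Sh}(\mathbf{C}, J)$ is jointly epimorphic if and only if, for every object $D$ of $\mathbf{C}$ and every element $y \in F(D)$, there exists a $J$-cover $\{h_k : E_k \to D\}_{k=1}^n$ such that, for each $k$, the restriction $y \cdot h_k \in F(E_k)$ lies in the image of $(g_{i(k)})_{E_k}$ for some $i(k) \in I$. This is, in turn, a direct consequence of the fact that subobjects in $\mathrm{Sh}(\mathbf{C}, J)$ are characterised by their local sections over the site.

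For the ``only if'' direction, assume $\{f_i : C_i \to C\}_{i\in I}$ generates a $J$-covering sieve $S$ on $C$. The inclusion $S \hookrightarrow \mathbf{y}C$ then becomes an isomorphism after applying $\mathbf{a}$, since sheafification inverts inclusions of covering sieves. On the other hand, $\mathbf{a}S$ is, by construction, the image of the family $\{\mathbf{a}\mathbf{y}f_i\}_{i\in I}$ in $\mathrm{Sh}(\mathbf{C}, J)$; hence this image is the whole of $\mathbf{a}\mathbf{y}C$, which is exactly joint epimorphicity.

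For the ``if'' direction, consider the identity $\mathrm{id}_C \in \mathbf{y}C(C)$ and its image $\eta_C(\mathrm{id}_C) \in \mathbf{a}\mathbf{y}C(C)$ under the unit of the sheafification adjunction. By the characterisation above applied to this element, joint epimorphicity yields a $J$-cover $\{h_k : E_k \to C\}_{k=1}^n$ and, for each $k$, an element of $(\mathbf{a}\mathbf{y}C_{i(k)})(E_k)$ mapping to $\eta_{C}(\mathrm{id}_C) \cdot h_k = \eta_{E_k}(h_k)$ under $\mathbf{a}\mathbf{y}f_{i(k)}$. Unwinding the plus-construction, this means that, after passing to a further $J$-cover $\{h'_{k\ell} : E'_{k\ell} \to E_k\}_\ell$, each composite $h_k \circ h'_{k\ell}$ factors through $f_{i(k)}$ via some morphism of $\mathbf{C}$. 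The composite family $\{h_k \circ h'_{k\ell}\}$ is $J$-covering by the transitivity axiom, and it refines the family $\{f_i\}_{i \in I}$; hence, by the stability axiom, $\{f_i\}_{i \in I}$ is itself $J$-covering.

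The main technical nuisance is the unwinding of the plus-construction in the second direction, since sheafification is not levelwise: an element of $\mathbf{a}F(D)$ is not a section of $F$ over $D$, but an equivalence class of matching families over some $J$-cover of $D$. Once this is handled properly, both directions reduce to the topology axioms. As this is a routine calculation in any Grothendieck topos, one may equivalently just cite \cite[Corollary III.7.5]{mac1992sheaves}.
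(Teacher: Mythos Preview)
Your proposal is correct: the paper's own proof is simply a citation to \cite[Corollary III.7.7]{mac1992sheaves}, and you likewise conclude by citing Mac Lane--Moerdijk (though you point to III.7.5 rather than III.7.7). The sketch you provide beforehand is a sound unwinding of that result; the only cosmetic slip is writing $J$-covers with a finite index set $\{h_k\}_{k=1}^n$, which is appropriate for the precanonical topology used later in the paper but not for an arbitrary site---just replace these by families over an arbitrary index set.
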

\begin{proof}
See \cite[Corollary III.7.7]{mac1992sheaves}.
\end{proof}

\begin{prop}
The principle $\mathsf{HIP}_\forallst$ holds in $\mathcal{N}$.
\end{prop}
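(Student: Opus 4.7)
The plan is to construct an auxiliary filter $\mathcal{F}'$ refining $\mathcal{F}$ that forces the premise $\forallst x:S \, \varphi(\alpha,x)$, use the hypothesis to extract a single concrete sequence $t \in T^*$ witnessing the external existential, and then verify that this $t$ serves as a uniform herbrandised witness over $\mathcal{F}$ itself. Writing $\mathcal{F} = (C, \mathcal{F}_I)$, with $\alpha$ denoting a valuation for the other free variables of $\varphi$ and $\Psi$, I would set
\begin{equation*}
	\mathcal{F}' := (C, \mathcal{F}'_{I \times S^*})\;, \qquad \mathcal{F}'_{(i,t')} := \mathcal{F}_i \cap \{u \in C \,|\, \forall x \in t'.\, \varphi(\alpha(u), x)\}\;,
\end{equation*}
which is a filter base by means of concatenation on $S^*$, and for which the identity on $C$ represents a continuous map $\iota : \mathcal{F}' \to \mathcal{F}$.

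By Lemma \ref{lem:quantifiers}(a), showing $\mathcal{F}' \Vdash \forallst x:S \, \varphi(\alpha\iota, x)$ reduces to forcing $\varphi(\alpha\iota, x!)$ for each $x \in S$, which is witnessed at the base $\mathcal{F}'_{(i, \langle x \rangle)}$ via Theorem \ref{thm:transfer}. Applying the hypothesis along $\iota$ then yields $\mathcal{F}' \Vdash \existst y:T \, \Psi(\alpha\iota, y)$, and Lemma \ref{lem:quantifiers}(b) supplies a concrete $t \in T^*$ with $\mathcal{F}' \Vdash \exists y \in t! \, \Psi(\alpha\iota, y)$. To finish, I would establish the claim $\mathcal{F} \Vdash \forallst x:S \, \varphi(\alpha, x) \to \exists y \in t! \, \Psi(\alpha, y)$; the trivial cover of $\mathcal{F}$ paired with the standard sequence $t!$ then validates the outer $\existst t:T^*$ in the herbrandised conclusion. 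For the claim, I would take an arbitrary $\beta: \mathcal{G} \to \mathcal{F}$ with $\mathcal{G} \Vdash \forallst x:S \, \varphi(\alpha\beta, x)$ and show that $\beta$ factors through $\iota$; monotonicity of forcing then transports the $\exists y \in t!$ statement from $\mathcal{F}'$ down to $\mathcal{G}$. To produce the factorisation, given any base $\mathcal{F}'_{(i, t')}$, continuity of $\beta$ yields a base of $\mathcal{G}$ mapping into $\mathcal{F}_i$, while Theorem \ref{thm:transfer} applied to each of the finitely many $x \in t'$ supplies, after a common refinement, a base of $\mathcal{G}$ on which $\varphi(\alpha\beta(u), x)$ holds for every $x \in t'$; intersecting the two lands inside $\mathcal{F}'_{(i,t')}$.

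The main subtlety I anticipate is the degenerate case, in which some $\mathcal{F}'_{(i_0, t_0)}$ is already empty and hence $\mathcal{F}' \simeq 0$ in $\mathfrak{F}\mathbf{Set}$. There the hypothesis cannot be invoked nontrivially, but the herbrandised conclusion is forced vacuously: one may take $t := \langle\rangle$, since any $\mathcal{G}$ forcing the premise over $\mathcal{F}$ must itself be degenerate by the same transfer argument applied to $t_0$, and so forces anything. I expect this to be a brief aside rather than a real obstacle, once the construction of $\mathcal{F}'$ is in place.
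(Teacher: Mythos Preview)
Your argument is correct and complete (including the handling of the degenerate case), but it takes a genuinely different route from the paper's proof.

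The paper works abstractly in the subobject lattice of $\mathcal{N}$: it observes that, since $\varphi$ is internal, each $\llbracket \varphi(z,x!) \rrbracket$ is representable by transfer, so $\alpha^*\llbracket \forallst x\,\varphi \rrbracket = \alpha^*\bigwedge_{x\in S}\llbracket \varphi(z,x!) \rrbracket$ is again representable, say $\mathbf{y}\mathcal{H}$, because the Yoneda embedding preserves all limits. The hypothesis then gives $\mathbf{y}\mathcal{H} \leq \bigvee_{t\in T^*} F_t$, and the paper invokes the general sheaf-theoretic fact (Mac Lane--Moerdijk III.7.7) that a jointly epimorphic family of representables onto a representable must contain a $K$-cover, yielding finitely many $t_1,\ldots,t_n$ whose concatenation is the desired $t$.

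Your approach constructs the same object $\mathcal{H}$ by hand as $\mathcal{F}'$, verifies that it forces the premise, pulls the hypothesis back along $\iota$, and extracts $t$ directly from Lemma~\ref{lem:quantifiers}(b). The factorisation argument in your final step is precisely the verification that $\mathcal{F}'$ represents the subobject $\alpha^*\llbracket \forallst x\,\varphi \rrbracket$: any $\mathcal{G}$ over $\mathcal{F}$ forcing the premise factors through $\mathcal{F}'$. So the conceptual core---representability of the premise locus---is the same, but you prove it by a concrete universal-property check rather than by appeal to limit preservation, and you bypass the Mac Lane--Moerdijk lemma entirely. Your proof is more elementary and stays within the forcing calculus; the paper's is shorter once the abstract machinery is in place, and makes clearer why the argument would go through for $\mathrm{Sh}(\mathfrak{F}\mathbf{C},K)$ over any coherent $\mathbf{C}$.
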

\begin{proof}
Let $\mathcal{F}$ be any filter, $\varphi(z,x)$ an internal formula, $\Psi(z,y)$ an external formula, with $x:S$, $y:T$, $z:U$, and $\alpha \in \llbracket U \rrbracket \mathcal{F}$. Suppose
\begin{equation*}
	\mathcal{F} \Vdash \forallst x:S \, \varphi(\alpha, x) \to \existst y:T \, \Psi(\alpha, y)\;.
\end{equation*}
By the semantics of first order logic in a Heyting category, this is equivalent to
\begin{equation*}
	\alpha^*\llbracket \forallst x \, \varphi(z,x) \rrbracket \leq \alpha^* \llbracket \existst y:T \, \Psi(z,y) \rrbracket
\end{equation*}
in $\mathrm{Sub}(\mathbf{y}\mathcal{F})$. By the semantics of the $\forallst$ predicate, we can write
\begin{equation*}
	\alpha^*\llbracket \forallst x \, \varphi(z,x) \rrbracket = \alpha^* \bigwedge_{x \in S} \llbracket \varphi(z,x!) \rrbracket\;;
\end{equation*}
and, by the suitable transfer theorem, for all $x \in S$,
\begin{equation*}
	\llbracket \varphi(z,x!) \rrbracket = \mathbf{y}\{z \in U \,|\, \varphi(z,x) \}\;.
\end{equation*}
Since the Yoneda embedding preserves and reflects all limits, we obtain
\begin{equation*}
	\alpha^*\llbracket \forallst x \, \varphi(z,x) \rrbracket = \mathbf{y}\Big( \alpha^* \bigwedge_{x \in S} \{z \in U \,|\, \varphi(z,x) \} \Big) =: \mathbf{y}\mathcal{H}\;.
\end{equation*}

For the consequence, we have, by the semantics of $\existst$ in $\mathcal{N}$, that
\begin{align*}
	\alpha^* \llbracket \existst y:T \, \Psi(z,y) \rrbracket & = \alpha^* \bigvee_{t\in T^*} \llbracket \exists y \in t! \, \Psi(z, y) \rrbracket = \\
	& = \bigvee_{t\in T^*} \alpha^* \llbracket \exists y \in t! \, \Psi(z, y) \rrbracket =: \bigvee_{t\in T^*} F_t\;,
\end{align*}
where we also used that unions are stable under pullback. Thus, there is a monomorphism $m: \mathbf{y}\mathcal{H} \rightarrowtail \bigvee_{t\in T^*} F_t$.

Let $\imath_t : F_t \rightarrowtail \bigvee_{t\in T^*} F_y$ be the inclusions of the $F_t$ in their union, for all $t \in T^*$, and consider the pullback diagrams
\begin{equation*}
\begin{tikzcd}
	m^* F_t \arrow[tail]{r} \arrow[tail]{d} & F_t \arrow[tail]{d}{\imath_t} \\
	\mathbf{y}\mathcal{H} \arrow[tail]{r}{m} & \bigvee_{t\in T^*} F_t \;.
\end{tikzcd}
\end{equation*}
Now, we use the fact that each $m^* F_t$ can be covered with a family of representable sheaves, to obtain a family $\{f_t : \mathbf{y}\mathcal{G}_t \to \mathbf{y}\mathcal{H} \}_{t \in T^*}$ of morphisms, such that each $m f_t$ factors through a \emph{single} $F_t$.

Moreover, since the $\{\imath_t : F_t \rightarrowtail \bigvee_{t\in T^*} F_t\}_{y \in T}$ jointly cover $\bigvee_{t\in T^*} F_t$, and in a Heyting pretopos all epimorphisms are stable under pullback \cite[Proposition IV.7.3]{mac1992sheaves}, the family $\{f_t : \mathbf{y}\mathcal{G}_t \to \mathbf{y}\mathcal{H} \}_{t \in T^*}$ is jointly epimorphic over $\mathbf{y}\mathcal{H}$.

By the previous lemma, we can extract from it a family of the form $\{\mathbf{y}\beta_k : \mathbf{y}\mathcal{G}_k \to \mathbf{y}\mathcal{H}\}_{k=1}^n$, where $\{\beta_k : \mathcal{G}_k \to \mathcal{H}\}_{k=1}^n$ is a $K$-cover in $\mathfrak{F}\mathbf{Set}$. Let $t := t_1 \cdot \ldots \cdot t_n$, such that $\mathbf{y}\beta_k$ factors through $F_{t_k}$, $k = 1,\ldots,n$. Then,
\begin{equation*}
	\mathbf{y}\mathcal{H} = \alpha^*\llbracket \forallst x \, \varphi(z,x) \rrbracket \leq \alpha^* \bigvee_{k=1}^n \llbracket \exists y \in t_k! \, \Psi(z, y) \rrbracket = \alpha^* \llbracket \exists y\in t! \, \Psi(z, y) \rrbracket\;.
\end{equation*}
Translating back to forcing semantics, this is precisely the statement that
\begin{equation*}
	\mathcal{F} \Vdash \forallst x:S \, \varphi(\alpha, x) \to \exists y\in t! \, \Psi(\alpha, y)\;,
\end{equation*}
from which it follows that
\begin{equation*}
	\mathcal{F} \Vdash \existst t:T^* \, \big(\forallst x:S \, \varphi(\alpha, x) \to \existst y\in t \, \Psi(\alpha, y)\big)\;. \qedhere
\end{equation*}
\end{proof}

So far, we used no principles whose constructive status is controversial, neither in the construction of the model, nor in our proofs. However, for our last pair of characteristic principles to hold, we must require that the \emph{axiom of choice} holds in the metatheory.

\begin{prop}
Suppose that the axiom of choice holds in the metatheory. Then the principle $\mathsf{HAC}^\mathrm{st}$ holds in $\mathcal{N}$.
\end{prop}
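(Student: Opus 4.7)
The plan is to unfold the forcing semantics on both sides of the implication using Lemma \ref{lem:quantifiers}, and then invoke the axiom of choice in the metatheory to package the pointwise witnesses into a single function, which will serve (as a singleton sequence) as the herbrandised witness.

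Let $\mathcal{F}$ be an arbitrary filter, $\Phi(z,x,y)$ an external formula with $x:S$, $y:T$, $z:U$, and $\alpha \in \llbracket U \rrbracket \mathcal{F}$. Suppose that $\mathcal{F} \Vdash \forallst x:S \, \existst y:T \, \Phi(\alpha,x,y)$. By Lemma \ref{lem:quantifiers}(a), for every $x \in S$ we have $\mathcal{F} \Vdash \existst y:T \, \Phi(\alpha, x!, y)$; then, by Lemma \ref{lem:quantifiers}(b), for each such $x$ there exists some $t_x \in T^*$ with $\mathcal{F} \Vdash \exists y \in t_x! \, \Phi(\alpha, x!, y)$. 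Applying the axiom of choice in the metatheory, we extract a function $\phi : S \to T^*$ with $\phi(x) = t_x$ for all $x \in S$.

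Now set $f := \langle \phi \rangle \in (S \to T^*)^*$; since $f$ is an element of a set in the metatheory, $f!$ is standard at every stage. By the definition of finite-sequence application, $f[x] = \phi(x) = t_x$ in the metatheory, and the analogous equation $f![x!] = t_x!$ holds in the topos via Theorem \ref{thm:transfer}. Hence, for every $x \in S$,
\begin{equation*}
\mathcal{F} \Vdash \exists y \in f![x!] \, \Phi(\alpha, x!, y),
\end{equation*}
and another application of Lemma \ref{lem:quantifiers}(a) yields
\begin{equation*}
\mathcal{F} \Vdash \forallst x:S \, \exists y \in f![x] \, \Phi(\alpha, x, y).
\end{equation*}
Taking $f$ itself as the $\existst$-witness, we conclude $\mathcal{F} \Vdash \existst f:(S \to T^*)^* \, \forallst x:S \, \exists y \in f[x] \, \Phi(\alpha, x, y)$.

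The only nonconstructive step, and hence the key obstacle justifying the hypothesis on the metatheory, is the appeal to the axiom of choice to assemble the pointwise witnesses $\{t_x\}_{x \in S}$ into a single function $\phi$; every other step is a routine translation through the forcing semantics provided by Lemma \ref{lem:quantifiers}. It is worth observing that the singleton $\langle \phi \rangle$ already suffices as the herbrandised witness here, so the ``list of candidate functions'' format of $\mathsf{HAC}^\mathrm{st}$ is not exploited in this proof; its role becomes visible only when one tries to dispense with choice in the metatheory, which motivates the de-herbrandisation discussed in Section \ref{sec:uniform}.
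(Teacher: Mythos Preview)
Your proof is correct and follows essentially the same route as the paper: unfold the external quantifiers via Lemma~\ref{lem:quantifiers}, apply the metatheoretic axiom of choice to collect the pointwise $t_x$'s into a single function, and use it as the $\existst$-witness. The only cosmetic difference is that you explicitly wrap $\phi$ in a singleton sequence $\langle\phi\rangle$ to match the type $(S\to T^*)^*$ appearing in $\mathsf{HAC}^\mathrm{st}$, whereas the paper writes the conclusion directly with $f:S\to T^*$ and ordinary application $f(x)$; this is harmless since the two formulations are trivially equivalent.
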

\begin{proof}
Let $\mathcal{F}$ be any filter, $\Phi(z,x,y)$ an external formula, $x:S, y:T, z:U$, and $\alpha \in \llbracket U \rrbracket \mathcal{F}$. Assume
\begin{equation*}
	\mathcal{F} \Vdash \forallst x:S \, \existst y:T \, \Phi(\alpha, x,y)\;;
\end{equation*}
by Lemma \ref{lem:quantifiers}, this means in $\mathcal{N}$ that, for all $x \in S$, there exists $t \in T^*$ such that
\begin{equation*}
	\mathcal{F} \Vdash \exists y \in t! \, \Phi(\alpha, x!, y)\;.
\end{equation*}
With the axiom of choice, we can find a function $f \in S \to T^*$ such that, for all $x \in S$,
\begin{equation*}
	\mathcal{F} \Vdash \exists y \in f(x)! \, \Phi(\alpha, x!, y)\;.
\end{equation*}
Since $\llbracket f(x)! \rrbracket = \llbracket f!(x!) \rrbracket$, it follows that $\mathcal{F} \Vdash \existst f:S \to T^* \, \forallst x:S \, \exists y \in f(x) \, \Phi(\alpha, x, y)$.
\end{proof}

In fact, a herbrandised version of the axiom of choice would suffice; but that would be a strange axiom to have in one's metatheory. The condition is necessary to a certain extent, for $\mathsf{HAC}^\mathrm{st}$ implies a herbrandised axiom of choice - call it $\mathsf{HAC}$ - in $\mathbf{Set}$: suppose
\begin{equation*}
	\forall x \in S \, \exists y \in T \, \varphi(x,y)\;.
\end{equation*}
By Corollary \ref{cor:trforallexists}, it follows that $\Vdash \forallst x:S \, \existst y:T \, \varphi(x,y)$. If $\mathsf{HAC}^\mathrm{st}$ holds in $\mathcal{N}$, we can deduce
\begin{equation*}
	\Vdash \existst f:S \to T^* \, \forallst x:S \, \exists y \in f(x) \, \varphi(x, y)\;;
\end{equation*}
and, applying the transfer theorem again, we obtain
\begin{equation*}
	\exists f \in S \to T^* \, \forall x \in S \, \exists y \in f(x) \, \varphi(x, y)
\end{equation*}
in $\mathbf{Set}$.

In the same way, the transfer rules can be used to rule out unconstrained validity of other principles in $\mathcal{N}$, as in the following example.

\begin{exm}
Let $T(s)$ be a binary tree, i.e.\ an internal formula on binary sequences such that
\begin{enumerate}
	\item $T(\langle \rangle)$ holds, and
	\item $\forall n, m \in \mathbb{N} \, \forall s \in 2^\mathbb{N} \, (T(\bar{s}m) \land n \leq m ) \to T(\bar{s}n)$, where $\bar{s}n := \langle s_0, \ldots, s_{n-1} \rangle$.
\end{enumerate}
The \emph{fan theorem} is the statement that for any such $T$, if, for all sequences $s \in 2^\mathbb{N}$, there exists $n \in \mathbb{N}$ such that $\neg \, T(\bar{s}n)$, then there exists some $n \in \mathbb{N}$ such that $\neg \, T(\bar{s}n)$ holds for all $s \in 2^\mathbb{N}$.

We consider the following, external version of the fan theorem:
\begin{equation*}
	\mathsf{FAN}^\mathrm{st}: \quad \forallst s:2^\mathbb{N} \, \existst n:\mathbb{N}\, \neg \, T(\bar{s}n) \to \existst n:\mathbb{N}\, \forallst s:2^\mathbb{N} \, \neg \, T(\bar{s}n)\;.
\end{equation*}
We claim that, if $\mathsf{FAN}^\mathrm{st}$ holds in $\mathcal{N}$, then the fan theorem holds in the metatheory. For suppose that, for all $s \in 2^\mathbb{N}$, there exists $n \in \mathbb{N}$ such that $\neg \, T(\bar{s}n)$. By transfer,
\begin{equation*}
	\Vdash \forallst s:2^\mathbb{N} \, \existst n:\mathbb{N}\, \neg \, T(\bar{s}n)\;;
\end{equation*}
and, if $\mathsf{FAN}^\mathrm{st}$ holds, we deduce
\begin{equation*}
	\Vdash \existst n:\mathbb{N}\,\forallst s:2^\mathbb{N} \, \neg \, T(\bar{s}n)\;.
\end{equation*}
This means that there exists a finite sequence $t$ of natural numbers, such that
\begin{equation*}
	\Vdash \exists n \in t! \, \forallst s:2^\mathbb{N} \, \neg \, T(\bar{s}n)\;.
\end{equation*}
By condition 2 on binary trees, we have that, if $\neg \, T(\bar{s}n)$ and $m \geq n$, then also $\neg \, T(\bar{s}m)$; therefore, picking $\tilde{n} := \max\{t_0, \ldots, t_{|t|-1}\}$, we are sure that
\begin{equation*}
	\Vdash \forallst s:2^\mathbb{N} \, \neg \, T(\bar{s}\tilde{n}!)\;.
\end{equation*}
By transfer, for all $s \in 2^\mathbb{N}$, $\neg \, T(\bar{s}\tilde{n})$, and we have proved the fan theorem.
\end{exm}

\section{The uniform Diller-Nahm interpretation} \label{sec:uniform}
In this section, we take a step back, forgetting about nonstandard arithmetic for a while; a reconsideration of ideas from Lifschitz, Berger, and Hernest leads us to a new functional interpretation - uniform Diller-Nahm - of which nonstandard Dialectica can be seen, \emph{a posteriori}, as a \emph{herbrandised} version.

\subsection{Calculability and herbrandisation}
In \cite{lifschitz1985calculable}, Lifschitz made the suggestion to see constructive mathematics as an \emph{extension} of classical mathematics.
Lifschitz's proposal is to enrich the language of Heyting arithmetic with a predicate $\mathrm{K}(n)$, ``$n$ is calculable''; and then extend Kleene's recursive realisability relation, write it $x \;\mathsf{r}_\mathrm{K}\; \varphi$, with the clause
\begin{itemize}
	\item[$\triangleright$] $x \;\mathsf{r}_\mathrm{K}\;\mathrm{K}(n)$ if and only if $x = n$,
\end{itemize}
all the while interpreting quantifiers \emph{uniformly}:
\begin{itemize}
	\item[$\triangleright$] $x \;\mathsf{r}_\mathrm{K}\; \forall n \, \varphi(n)$ if and only if $\forall n \, (x \;\mathsf{r}_\mathrm{K}\; \varphi(n))$,
	\item[$\triangleright$] $x \;\mathsf{r}_\mathrm{K}\; \exists n \, \varphi(n)$ if and only if $\exists n \, (x \;\mathsf{r}_\mathrm{K}\; \varphi(n))$.
\end{itemize}
In this definition, quantifiers are, by themselves, completely void of any computational meaning; it is by invoking quantifiers \emph{restricted to calculable numbers}, $\forall n\,(\mathrm{K}(n) \to \ldots)$, and $\exists n\, (\mathrm{K}(n) \land \ldots)$, that one restores it.

A couple of decades later, Lifschitz's demand was rediscovered, from a completely different perspective, in the area of proof mining. Rather than the foundational issue of injecting a certain ``modular constructiveness'' into classical reasoning, it was the practical problem of more efficient \emph{program extraction} from proofs that was addressed.

Even in fully intuitionistic proofs, a fine-grained analysis reveals instances of formulae with quantifiers that are \emph{computationally redundant}; i.e. the constructive content that is encoded in the quantifiers is never used in the program extracted with the aid of a functional interpretation. This always happens, in particular, when - in a natural deduction setting - an implication introduction discharges more then one instance of the same formula, so that the contraction rule needs to be used.

One would want a way to flag such quantifiers, telling the extraction program to just ``pass through'' them. This is the function performed by Berger's \emph{uniform} quantifiers \cite{berger2005uniform} and by Hernest's quantifiers \emph{without computational meaning} \cite{hernest2005light}. But, realisability being a rudimentary functional interpretation - this is clear, in particular, for Kreisel's \emph{modified} brand, see \cite{oliva2006unifying} - this is also what Lifschitz's calculability predicate achieved!

One possibly unexpected consequence of Lifschitz's ideas is that there will be two types of disjunction as well. One is a computationally empty disjunction $\lor$, with
\[ \Phi \lor \Psi \leftrightarrow \exists z:0 \, (z = 0 \to \Phi \land \neg \, z = 0 \to \Psi), \]
while there is also a computationally relevant disjunction $\lor_\mathrm{K}$, with
\[ \Phi \lor_\mathrm{K} \Psi \leftrightarrow \exists z:0 \, (\mathrm{K}(z) \land z = 0 \to \Phi \land \neg \, z = 0 \to \Psi). \]
These are not equivalent: in fact, only the second computationally relevant disjunction will act as a disjunction with respect to all the formulae in the language; the computationally empty disjunction only acts as a disjunction with respect to ``internal'' formulae (i.e.\ those not containing the $\mathrm{K}$-predicate).

\emph{Herbrandisation} can be seen as a way of repairing this schism. The idea is to weaken the computational meaning of the $\mathrm{K}$-predicate and define instead
\begin{itemize}
	\item[$\triangleright$] $x \;\mathsf{r}_\mathrm{K}\;\mathrm{K}(n)$ if and only if $x$ codes a sequence and $n$ is one of the components of the sequence coded by $x$.
\end{itemize}
This is reminiscent of \emph{Herbrand disjunctions} in classical logic - whence the name. There are some technical difficulties to overcome and this idea works especially well in the context of modified realisability, leading to \emph{Herbrand realisability} as introduced in \cite{van2012functional}. 

This process of herbrandisation is reflected in many of the characteristic principles of Herbrand realisability. While the axiom of choice for finite types
\begin{equation*}
	\mathsf{AC}: \quad \forall x:\sigma \, \exists y:\tau \, \Phi(x,y) \to \exists f:\sigma\to\tau \, \forall x:\sigma \, \Phi(x,fx)
\end{equation*}
is a characteristic principle of modified realisability, the herbrandised axiom of choice
\begin{equation*}
	\mathsf{HAC}^\mathrm{st}: \quad \forallst x:\sigma \, \existst y:\tau \, \Phi(x,y) \to \existst f:(\sigma\to\tau^*)^* \, \forallst x:\sigma \, \exists y \in f[x] \, \Phi(x,y)
\end{equation*}
(writing $\mathrm{st}$ again instead of $\mathrm{K}$) is a characteristic principle of Herbrand realisability. It seems natural to regard many of the characteristic principles of nonstandard Dialectica as herbrandisations of other, unherbrandised, principles, suggesting that also nonstandard Dialectica can be obtained by a process of herbrandisation from a functional interpretation which incorporates many of Lifschitz's ideas. The aim of this section is to show that this is indeed the case.

\begin{remark} Note that it is an immediate consequence of herbrandisation that disjunction loses any constructive meaning. In fact, a good way to think about herbrandisation is as a way of weakening the computational meaning of the $\mathrm{K}$-predicate in such a way that $\lor_\mathrm{K}$ collapses to the ordinary, computationally empty, disjunction.

Something like this is presumably unavoidable when one wants to interpret nonstandard systems: indeed, there seems to be a clash between the computational meaning of disjunction and nonstandard arithmetic. One way in which this manifests itself is that systems for nonstandard arithmetic often do not have the disjunction property: for example, E-HA$^{\omega*}_\mathrm{st} + \mathsf{OS}_0$ does not have the disjunction property, as proved in \cite{avigad2002transfer}. Another manifestation is the incompatibility of Church's Thesis for disjunctions
\[ \begin{array}{l} \forall x:0 \, \, \big( \, \varphi(x) \lor \psi(x) \,\big) \to \exists f:0 \to 0 \, \big( \, f \mbox{ is computable } \land \\
 \forall x:0 \, ( \, f(x) = 0 \to \varphi(x) \land f(x) \not= 0 \to \psi(x) \, ) \, \big) \end{array} \]
with the existence of nonstandard models for arithmetic (see \cite{mccarty1987variations}), showing, for instance, that there are no nonstandard models of arithmetic in the effective topos. This should be compared with the Herbrand topos from \cite{van2011herbrand}, where nonstandard models of arithmetic do exist.
\end{remark}

Before defining our new functional interpretation, we should first ``de-herbrandise'' our system. By the previous discussion, we can already guess that ``internal formula'' has to be replaced by the next best thing - ``internal \emph{and $\lor$-free} formula''.
\begin{itemize} \item[] \textbf{Notation.} If $\mathsf{P}$ is an axiom schema where certain schematic variables range over \emph{internal} formulae of E-HA$^{\omega*}_{\mathrm{st}}$, we write $\mathsf{P}_\lor$ for the same axiom schema, where ``internal'' is replaced by ``internal and $\lor$-free''.
\end{itemize}
Our tentative characteristic system is then E-HA$^{\omega*}_{\mathrm{st}\lor}$, that is the system E-HA$^{\omega*}_\mathrm{st}$ with $\mathsf{IA}_\lor$ in place of $\mathsf{IA}$, plus the characteristic principles $\mathsf{OS}^*_\lor$, $\mathsf{US}^*_\lor$, $\mathsf{AC}^\mathrm{st}$,
\begin{align*}
	& \mathsf{IP}_{\forall \, \lor}^\mathrm{st}: \quad (\forallst x:\sigma \, \varphi(x) \to \existst y:\tau \, \Psi(y)) \to \existst y:\tau \, (\forallst x:\sigma \, \varphi(x) \to \Psi(y))\;, \\
	& \mathsf{NU}: \quad \forall y: \tau\, \existst x: \sigma \, \Phi(x,y) \to \existst x: \sigma \, \forall y: \tau\, \Phi(x,y)\;.
\end{align*}
Since the restriction to $\lor$-free formulae also applies to the internal induction schema $\mathsf{IA}$, we do not get a proper system of arithmetic. This would actually be inconsistent with the nonstandard uniformity principle $\mathsf{NU}$.

\begin{prop}
In \emph{E-HA}$^{\omega*}_{\mathrm{st}\lor}$, the principle $\mathsf{NU}$ implies
\begin{equation*}
	\neg \, \forall n:0 \, (n = 0 \lor \neg \, n = 0)\;.
\end{equation*}
\end{prop}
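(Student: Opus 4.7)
The plan is to derive a contradiction by instantiating $\mathsf{NU}$ at a carefully chosen formula that uses the assumed decision procedure to demand a witness depending nontrivially on the universally quantified variable.

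Suppose for contradiction that $\forall n:0 \,(n = 0 \lor \neg\, n = 0)$. Take $\sigma = \tau = 0$ and consider
\[ \Phi(x,y) \;:=\; (y = 0 \land x = 0) \,\lor\, (\neg\, y = 0 \land x = 1). \]
For any fixed $y:0$, the decidability hypothesis lets us split cases: if $y = 0$, then $x := 0$ makes the first disjunct true; if $\neg\, y = 0$, then $x := 1$ makes the second disjunct true. In either case we produce a witness which, being a closed numeral, is standard. Hence
\[ \forall y:0 \, \existst x:0 \, \Phi(x,y). \]

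Apply $\mathsf{NU}$ to obtain $\existst x:0 \,\forall y:0 \,\Phi(x,y)$, and extract a standard $x^\ast$ with $\forall y \,\Phi(x^\ast,y)$. Instantiating at $y = 0$ collapses the second disjunct (since $\neg\, 0 = 0$ is refutable), forcing $x^\ast = 0$; instantiating at $y = 1$ collapses the first disjunct (since $1 = 0$ is refutable), forcing $x^\ast = 1$. Together these give $0 = 1$, a contradiction, so $\neg\, \forall n:0\,(n = 0 \lor \neg\, n = 0)$.

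There is no real obstacle to the argument; the only point to watch is that $\Phi$ explicitly contains a disjunction, but this is fine because $\mathsf{NU}$ is stated for arbitrary external formulae $\Phi$ and carries no $\lor$-restriction (unlike $\mathsf{OS}^*_\lor$ or $\mathsf{IP}_{\forall\,\lor}^\mathrm{st}$). Conceptually, $\mathsf{NU}$ asks for a single standard witness $x$ that works \emph{uniformly} across all $y$, while decidable equality on $0$ would let one choose $x$ as a genuine function of $y$; the tension between these two features is exactly what the proof exploits.
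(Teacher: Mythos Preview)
Your proof is correct and follows essentially the same approach as the paper's: assume decidability, use it to exhibit $\forall y \,\existst x \,\Phi(x,y)$ for a formula whose witness must genuinely depend on $y$, apply $\mathsf{NU}$, and derive a contradiction by instantiating at $y=0$ and $y=1$. The only cosmetic difference is that the paper encodes the disjunction via the $\lor$-free formula $(z=0 \to n=0) \land (\neg\, z=0 \to \neg\, n=0)$, whereas you keep an explicit $\lor$ in your $\Phi$; as you correctly observe, this is immaterial since $\mathsf{NU}$ carries no $\lor$-restriction.
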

\begin{proof}
Suppose $\forall n:0 \, (n = 0 \lor \neg \, n = 0)$. This is equivalent to
\begin{equation*}
	\forall n:0 \, \existst z:0 \, (z = 0 \to n = 0 \land \neg \, z = 0 \to \neg \, n = 0)\;,
\end{equation*}
which, by nonstandard uniformity, implies
\begin{equation*}
	\existst z:0 \, \forall n:0 \, (z = 0 \to n = 0 \land \neg \, z = 0 \to \neg \, n = 0)\;,
\end{equation*}
the statement that all natural numbers are zero, or all are non-zero; a contradiction.
\end{proof}
With the interpretation of $\mathrm{st}(n)$ as ``$n$ is calculable'', this is not unexpected; for how could we know whether a non-calculable number is zero or non zero? Notice that $\forallst n:0 \, (n = 0 \lor \neg \, n = 0)$ is still provable, thanks to the external induction schema.

The reason why we called $\mathsf{NU}$ a \emph{uniformity} principle is the similarity of
\begin{equation*}
	\forall s: 0^*\, \existst n: 0 \, \Phi(s,n) \to \existst n: 0 \, \forall s: 0^*\, \Phi(s,n)\;
\end{equation*}
to Troelstra's \emph{uniformity principle} \cite[Proposition 8.21]{troelstra1998chapter}
\begin{equation*}
	\mathsf{UP}: \quad \forall S \subseteq \mathbb{N} \, \exists n \in \mathbb{N} \, \Phi(S,n) \to \exists n \in \mathbb{N} \, \forall S \subseteq \mathbb{N} \, \Phi(S,n)\;,
\end{equation*}
a second-order principle that is validated by higher-order versions of recursive realisability, and which also has nonclassical consequences.

We can now define our de-herbrandised functional interpretation, prove that it is sound, and characterised by the desired proof system.

\subsection{The $U$ translation}
\begin{dfn} To every formula $\Phi(\tuple{a})$ of the language of E-HA$^{\omega*}_{\mathrm{st}\lor})$, with free variables $\tuple{a}$, we associate inductively its \emph{uniform Diller-Nahm} translation $\Phi(\tuple{a})^U = \existst \tuple{x}\,\forallst \tuple{y} \, \varphi_U(\tuple{x},\tuple{y},\tuple{a})$, where $\varphi_U$ is internal and $\lor$-free.
\begin{itemize}
	\item[$\triangleright$] $\varphi(\tuple{a})^U := \varphi_U(\tuple{a}) := \varphi(\tuple{a})$, for $\varphi$ internal atomic;
	\item[$\triangleright$] $\mathrm{st}_\sigma(x)^U := \existst y:\sigma \, ( y = x )\;$.
\end{itemize}
Let $\Phi(\tuple{a})^U = \existst \tuple{x}\,\forallst \tuple{y} \, \varphi_U(\tuple{x},\tuple{y},\tuple{a})$, $\Psi(\tuple{b})^U = \existst \tuple{u}\,\forallst \tuple{v} \, \psi_U(\tuple{u},\tuple{v},\tuple{b})$:
\begin{itemize}
	\item[$\triangleright$] $(\Phi(\tuple{a}) \land \Psi(\tuple{b}))^U := \existst \tuple{x},\tuple{u}\,\forallst \tuple{y},\tuple{v}\,\big(\varphi_U(\tuple{x},\tuple{y},\tuple{a}) \land \psi_U(\tuple{u},\tuple{v},\tuple{b})\big)\;$;
	\item[$\triangleright$] $(\Phi(\tuple{a}) \lor \Psi(\tuple{b}))^U := \existst z:0,\tuple{x},\tuple{u}\,\forallst \tuple{y},\tuple{v}\,\big(z = 0 \to \varphi_U(\tuple{x},\tuple{y},\tuple{a}) \land \neg\,z = 0 \to \psi_U(\tuple{u},\tuple{v},\tuple{b})\big)\;$;
	\item[$\triangleright$] $(\Phi(\tuple{a}) \to \Psi(\tuple{b}))^U := \existst \tuple{U},\tuple{Y}\,\forallst \tuple{x},\tuple{v}\,\big(\forall \tuple{y} \in \tuple{Y}\tuple{x}\tuple{v} \, \varphi_U(\tuple{x},\tuple{y},\tuple{a}) \to \psi_U(\tuple{U}\tuple{x},\tuple{v},\tuple{b})\big)\;$;
	\item[$\triangleright$] $(\exists z \,\Phi(z,\tuple{a}))^U := \existst \tuple{x} \, \forallst \tuple{y} \, \exists z \, \forall \tuple{y}' \in \tuple{y} \, \varphi_U(\tuple{x},\tuple{y}',z,\tuple{a})\;$;
	\item[$\triangleright$] $(\forall z \,\Phi(z,\tuple{a}))^U := \existst \tuple{x} \, \forallst \tuple{y} \, \forall z \, \varphi_U(\tuple{x},\tuple{y},z,\tuple{a})\;$;
	\item[$\triangleright$] $(\existst z \,\Phi(z,\tuple{a}))^U := \existst z,\tuple{x} \, \forallst \tuple{y} \, \varphi_U(\tuple{x},\tuple{y},z,\tuple{a})\;$;
	\item[$\triangleright$] $(\forallst z \,\Phi(z,\tuple{a}))^U := \existst \tuple{X} \, \forallst \tuple{y},z \, \varphi_U(\tuple{X}z,\tuple{y},z,\tuple{a})\;$.
\end{itemize}
\end{dfn}
The first thing to notice is that, if this interpretation is restricted to formulae that contain only external quantifiers - or, if you prefer, everything is declared standard - it is the same as the usual Diller-Nahm translation. In fact, except for a minor change in the interpretation of the uniform existential quantifier, it is to the Diller-Nahm variant precisely what Hernest's light Dialectica interpretation is to Dialectica.

Secondly, the interpretation is \emph{idempotent}: formulae of the form
\begin{equation*}
	\existst \tuple{x}\,\forallst \tuple{y} \, \varphi(\tuple{x},\tuple{y},\tuple{a})
\end{equation*}
with $\varphi$ internal and $\lor$-free are interpreted as themselves, as shown by an easy induction on their structure. This is a feature that the $D_\mathrm{st}$-translation lacked, due to the clause for the $\existst$ quantifier.

We will now prove soundness of the interpretation. We will not handle everything explicitly, though: except those concerning the quantifiers, all the logical axioms and rules admit the same realisers as those for the Diller-Nahm interpretation.

We write E-HA$^{\omega*}_\lor$ for the system E-HA$^{\omega*}$ with $\mathsf{IA}_\lor$ in place of $\mathsf{IA}$.

\begin{thm}[Soundness of uniform Diller-Nahm]
Suppose
\begin{equation*}
	\text{\emph{E-HA}}^{\omega*}_{\mathrm{st}\lor} + \mathsf{OS}^*_\lor + \mathsf{US}^*_\lor + \mathsf{NU} + \mathsf{AC}^\mathrm{st} + \mathsf{IP}_{\forall \, \lor}^\mathrm{st} + \Delta_\lor \vdash \Phi(\tuple{a})\;,
\end{equation*}
where $\Delta_\lor$ is a set of internal, $\lor$-free sentences. Let $\Phi(\tuple{a})^U = \existst \tuple{x}\,\forallst \tuple{y} \, \varphi_U(\tuple{x},\tuple{y},\tuple{a})$. Then from the proof we can extract a tuple of closed terms $\tuple{t}$ such that
\begin{equation*}
	\text{\emph{E-HA}}^{\omega*}_\lor + \Delta_\lor \vdash \forall \tuple{y} \, \varphi_U(\tuple{t}, \tuple{y}, \tuple{a})\;.
\end{equation*}
\end{thm}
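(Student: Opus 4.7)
The plan is to induct on the length of the derivation of $\Phi(\tuple{a})$ in the left-hand system, producing at each step a tuple $\tuple{t}$ of closed terms realising $\forall\tuple{y}\,\varphi_U(\tuple{t},\tuple{y},\tuple{a})$ in \emph{E-HA}$^{\omega*}_\lor + \Delta_\lor$. Before starting, I would establish by structural induction on $\Phi$ that the matrix $\varphi_U$ of $\Phi^U = \existst\tuple{x}\,\forallst\tuple{y}\,\varphi_U$ is always internal and $\lor$-free: the only cases that need attention are $\lor$, whose matrix is the conjunction $(z=0 \to \varphi_U) \land (\neg\,z=0 \to \psi_U)$, and $\to$, whose matrix is the bounded universal $\forall\tuple{y}\in\tuple{Y}\tuple{x}\tuple{v}\,\varphi_U \to \psi_U$; neither contains a genuine $\lor$. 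A complementary, equally short observation is that for $\varphi$ internal and $\lor$-free, $\varphi^U = \varphi$, so the sentences of $\Delta_\lor$ carry over as axioms, realised by empty tuples.

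For the axioms and rules of intuitionistic predicate logic, and for the equality, extensionality and defining axioms of \emph{E-HA}$^{\omega*}_\lor$, the realisers are exactly those of the Diller-Nahm interpretation -- the essential device being that, to interpret contraction, the doubled counter-witness is absorbed into the bounded universal $\forall\tuple{y}\in\tuple{Y}\tuple{x}\tuple{v}$ of the implication clause. I would therefore only treat in detail the clauses specific to the external language: the axioms for the standardness predicate, which, given $\mathrm{st}(x)^U = \existst y\,(y=x)$, are realised by projections and applications; the defining equivalences of $\forallst$ and $\existst$, realised by $\lambda$-abstraction and application; and the external induction schema $\mathsf{IA}^\mathrm{st}$, for which the step realiser is iterated by the list recursor $\mathrm{L}$, exactly as in the $D_\mathrm{st}$ case.

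The heart of the argument is the verification of the five characteristic principles. Unfolding the translations, $\mathsf{NU}$, $\mathsf{AC}^\mathrm{st}$ and $\mathsf{IP}_{\forall,\lor}^\mathrm{st}$ all reduce to implications between formulae whose existential prefixes are essentially identical -- this being the whole point of having $\forall z$ read uniformly -- so that an identity-like realiser, supplemented with singleton sequences for the bounded universals, always works. The principle $\mathsf{OS}^*_\lor$ reduces to
\begin{equation*}
  \existst Y\,\forallst T\,\big(\forall s\in YT\,\varphi(s) \to \exists s\,(T\subseteq s \land \varphi(s))\big),
\end{equation*}
witnessed by $Y := \lambda T.\langle T\rangle$, and $\mathsf{US}^*_\lor$ reduces to
\begin{equation*}
  \existst U\,\forallst X\,\big(\forall s\,(X\subseteq s \to \varphi(s)) \to \varphi(UX)\big),
\end{equation*}
witnessed by $U := \lambda X.X$, using $X \subseteq X$.

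The main obstacle, in my view, is really a bookkeeping one: one has to check, in every case, that the verification of the extracted realiser uses only $\mathsf{IA}_\lor$ and other $\lor$-free resources of the target system. A slip at this point -- an appeal to $\mathsf{IA}$ on a formula containing a real $\lor$ -- would be fatal, since, as shown just before the theorem, adding $\mathsf{NU}$ to a system with unrestricted $\mathsf{IA}$ is outright inconsistent; the $\lor$-freeness invariant is precisely what keeps the interpretation coherent, and is what forces the restriction in the statement of the theorem.
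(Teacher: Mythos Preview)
Your proposal is correct and follows the same structure as the paper's proof: induction on the derivation, Diller-Nahm realisers for the propositional fragment, explicit realisers for $\mathsf{OS}^*_\lor$ and $\mathsf{US}^*_\lor$ identical to yours, and the observation that for $\mathsf{NU}$, $\mathsf{AC}^\mathrm{st}$ and $\mathsf{IP}_{\forall,\lor}^\mathrm{st}$ the premise and conclusion have the same $U$-translation. Two small slips to note: the external induction realiser iterates the \emph{natural number} recursor $\mathrm{R}$, not the list recursor $\mathrm{L}$; and the \emph{internal} quantifier axioms and rules are not literally realised as in Diller-Nahm, since the $\exists z$ clause of the $U$-translation inserts a bounded quantifier $\forall\tuple{y}'\in\tuple{t}$ --- the paper treats these four cases explicitly, though the realisers remain essentially identities.
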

\begin{proof}
We proceed by induction on the length of the derivation.
\begin{enumerate}
\item \emph{The logical axioms and rules of intuitionistic first order predicate logic}. We consider the quantifier axioms and rules, and give another couple of examples, referring again to \cite[3.5.4]{troelstra1973metamathematical} for the rest.
	\begin{enumerate}[label=(\roman*)]
	\item Example - $\mathsf{weakening}: \quad A \to A \lor B$.
	
	Suppose $A^U = \existst \tuple{x}\,\forallst \tuple{y} \, \varphi(\tuple{x},\tuple{y},\tuple{a})$, $B^U = \existst \tuple{u}\,\forallst \tuple{v} \, \psi(\tuple{u},\tuple{v},\tuple{b})$. Then
	\begin{align*}
		(A \to A \lor B)^U = \; & \existst \tuple{Z}, \tuple{X}', \tuple{U}, \tuple{S} \, \forallst \tuple{x}, \tuple{y'}, \tuple{v} \, \big(\forall \tuple{y} \in \tuple{S}\tuple{x}\tuple{y}'\tuple{v} \, \varphi(\tuple{x},\tuple{y},\tuple{a}) \to \\
		& (\tuple{Z}\tuple{x} = 0 \to \varphi(\tuple{X}'\tuple{x}, \tuple{y}', \tuple{a}) \land \neg \, \tuple{Z}\tuple{x} = 0 \to \psi(\tuple{U}\tuple{x}, \tuple{v}, \tuple{b}))\big)\;,
	\end{align*}
	and we can take
	\begin{equation*}\begin{matrix*}[l]
		& \tuple{Z} := \lambda \tuple{x}.0\;, & \tuple{X}' := \lambda \tuple{x}.\tuple{x}\;, \\
		& \tuple{U} \text{ arbitrary,} & \tuple{S} := \lambda \tuple{x}, \tuple{y}', \tuple{v}.\langle \tuple{y}' \rangle\;.
	\end{matrix*}\end{equation*}
	
	\item $\forall z \, A \to A[b/z]$.
	
	Suppose $A^U = \existst \tuple{x}\,\forallst \tuple{y} \, \varphi(\tuple{x},\tuple{y},z, \tuple{a})$. Then
	\begin{align*}
		(\forall z \, A \to A[b/z])^U = \; & \existst \tuple{X}', \tuple{S} \, \forallst \tuple{x}, \tuple{y}' \, \\
		&\big(\forall \tuple{y} \in \tuple{S}\tuple{x}\tuple{y}' \, \forall z\, \varphi(\tuple{x},\tuple{y},z,\tuple{a}) \to \varphi(\tuple{X}'\tuple{x},\tuple{y}',b,\tuple{a}) \big)\;,
	\end{align*}
	so we can take
	\begin{equation*}\begin{matrix*}[l]
		& \tuple{X}' := \lambda \tuple{x}.\tuple{x}\;, & \tuple{S} := \lambda \tuple{x}, \tuple{y}'.\langle \tuple{y}' \rangle\;.
	\end{matrix*} \end{equation*}
	
	\item $A[b/z] \to \exists z \, A$.
	
	Suppose $A^U = \existst \tuple{x}\,\forallst \tuple{y} \, \varphi(\tuple{x},\tuple{y},z, \tuple{a})$. Then
	
	\begin{align*}
		(A[b/z] \to \exists z \, A)^U = \; & \existst \tuple{X}', \tuple{S} \, \forallst \tuple{x}, \tuple{t} \, \\
		& \big(\forall \tuple{y} \in \tuple{S}\tuple{x}\tuple{t} \, \varphi(\tuple{x}, \tuple{y}, b, \tuple{a}) \to \exists z \, \forall \tuple{y}' \in \tuple{t} \, \varphi(\tuple{X}'\tuple{x}, \tuple{y}', z, \tuple{a}) \big)\;,
	\end{align*}
	and we can take
	\begin{equation*}\begin{matrix*}[l]
		& \tuple{X}' := \lambda \tuple{x}.\tuple{x}\;, & \tuple{S} := \lambda \tuple{x}, \tuple{t}.\tuple{t}\;.
	\end{matrix*} \end{equation*}
	
	\item Example - $\mathsf{modus\;ponens}$.
	
	Suppose that $A^U = \existst \tuple{x}\,\forallst \tuple{y} \, \varphi(\tuple{x},\tuple{y},\tuple{a})$, $B^U = \existst \tuple{u}\,\forallst \tuple{v} \, \psi(\tuple{u},\tuple{v},\tuple{b})$, and that we have terms $\tuple{t}_1$ realising the interpretation of $A^U$ and $\tuple{T}_2, \tuple{T}_3$ realising the interpretation of $(A \to B)^U$.
	
	This means we have
	\begin{equation*}
		\text{E-HA}^{\omega*}_\lor + \Delta_\lor \vdash \forall \tuple{y} \, \varphi(\tuple{t}_1, \tuple{y}, \tuple{a})\;,
	\end{equation*}
	and
	\begin{equation*}
		\text{E-HA}^{\omega*}_\lor + \Delta_\lor \vdash \forall \tuple{x},\tuple{v} \, \big( \forall \tuple{y} \in \tuple{T}_3 \tuple{x}\tuple{v} \, \varphi(\tuple{x}, \tuple{y}, \tuple{a}) \to \psi(\tuple{T}_2\tuple{x}, \tuple{v}, \tuple{b})\big)\;.
	\end{equation*}
	Taking $\tuple{t}_4 := \tuple{T}_2\tuple{t}_1$, we obtain
	\begin{equation*}
		\text{E-HA}^{\omega*}_\lor + \Delta_\lor \vdash \forall \tuple{v} \, \psi(\tuple{t}_4, \tuple{v}, \tuple{b})\;,
	\end{equation*}
	as desired.
	
	\item $\begin{array}{c}B \to A \\ \hline B \to \forall z \, A \end{array}\;$.
	
	Suppose that $A^U = \existst \tuple{x}\,\forallst \tuple{y} \, \varphi(\tuple{x},\tuple{y},z,\tuple{a})$, $B^U = \existst \tuple{u}\,\forallst \tuple{v} \, \psi(\tuple{u},\tuple{v},\tuple{b})$, where $z$ is not free in $\psi$, and that we have terms $\tuple{T}_1, \tuple{T}_2$ realising $(B \to A)^U$. Then,
	
	 \begin{equation*}
		\text{E-HA}^{\omega*}_\lor + \Delta_\lor \vdash \forall \tuple{u},\tuple{y} \, \big( \forall \tuple{v} \in \tuple{T}_2 \tuple{u}\tuple{y} \, \psi(\tuple{u}, \tuple{v}, \tuple{b}) \to \varphi(\tuple{T}_1\tuple{u}, \tuple{y}, z, \tuple{a})\big)\;.
	\end{equation*}
	Then $\tuple{T}_3 := \tuple{T}_1$ and $\tuple{T}_4 := \tuple{T}_2$ realise the interpretation of $B \to \forall z \, A$.
	
	\item $\begin{array}{c}A \to B \\ \hline \exists z \, A \to B  \end{array}\;$.
	
	Suppose that $A^U = \existst \tuple{x}\,\forallst \tuple{y} \, \varphi(\tuple{x},\tuple{y},z,\tuple{a})$, $B^U = \existst \tuple{u}\,\forallst \tuple{v} \, \psi(\tuple{u},\tuple{v},\tuple{b})$, where $z$ is not free in $\psi$, and that we have terms $\tuple{T}_1, \tuple{T}_2$ realising $(A \to B)^U$. Then,
	\begin{equation*}
		\text{E-HA}^{\omega*}_\lor + \Delta_\lor \vdash \forall \tuple{x},\tuple{v} \, \big( \forall \tuple{y} \in \tuple{T}_2 \tuple{x}\tuple{v} \, \varphi(\tuple{x}, \tuple{y}, z, \tuple{a}) \to \psi(\tuple{T}_1\tuple{x}, \tuple{v}, \tuple{b})\big)\;.
	\end{equation*}
	
	We have
	\begin{align*}
		(\exists z\, A \to B)^U = \; & \existst \tuple{U}, \tuple{S} \, \forallst \tuple{x},\tuple{v} \, \\
		& \big(\forall s \in \tuple{S}\tuple{x}\tuple{v} \, \exists z \, \forall y \in s \, \varphi(\tuple{x},\tuple{y}, z, \tuple{a}) \to \psi(\tuple{U}\tuple{x}, \tuple{v}, \tuple{b})\big)\;;
	\end{align*}
	so we can take $\tuple{T}_3 := \tuple{T}_1$, and $\tuple{T}_4 := \lambda \tuple{x},\tuple{v}.\langle \tuple{T}_2\tuple{x}\tuple{v} \rangle\;$, to obtain
	\begin{equation*}
		\text{E-HA}^{\omega*}_\lor + \Delta_\lor \vdash \forall \tuple{x},\tuple{v} \, \big(\forall s \in \tuple{T}_4\tuple{x}\tuple{v} \, \exists z \, \forall y \in s \, \varphi(\tuple{x},\tuple{y}, z, \tuple{a}) \to \psi(\tuple{T}_3\tuple{x}, \tuple{v}, \tuple{b})\big)\;.
	\end{equation*}
	\end{enumerate}
	
\item \emph{The nonlogical axioms of extensional Heyting arithmetic in all finite types} (with the restricted induction schema $\mathsf{IA}_\lor$). These are all internal and $\lor$-free, hence are realised by the empty tuple.
	
\item \emph{The defining axioms of the external quantifiers}. Let $\Phi(x)^U := \existst \tuple{u} \, \forallst \tuple{v} \, \varphi(\tuple{u}, \tuple{v}, x)$ here.
	\begin{enumerate}[label=(\roman*)]
	\item $\forallst x \,\Phi(x) \leftrightarrow \forall x \,(\mathrm{st}(x) \to \Phi(x))\;$.
	
	The interpretation of $\forallst x \,\Phi(x) \to \forall x \,(\mathrm{st}(x) \to \Phi(x))$ is
	\begin{align*}
	& \existst \tuple{U'}, S, \tuple{T} \, \forallst \tuple{U}, y, \tuple{v}' \,\big(\forall x \in S\tuple{U}y\tuple{v}' \, \forall \tuple{v} \in \tuple{T}\tuple{U}y\tuple{v}' \, \varphi(\tuple{U}x, \tuple{v}, x) \to \\
	& \forall x \,(x = y \to \varphi(\tuple{U}'\tuple{U}y, \tuple{v}', x) )\big)\;;
	\end{align*}
	so we can take
	\begin{equation*}\begin{matrix*}[l]
		& \tuple{U}' := \lambda \tuple{U}, y.\tuple{U}y\;, & S := \lambda \tuple{U},y,\tuple{v}'.\langle y \rangle\;, \\
		& \tuple{T} := \lambda \tuple{U}, y, \tuple{v}'.\langle \tuple{v}' \rangle \;.
	\end{matrix*}\end{equation*}
	
	On the other hand, the interpretation of $\forall x \,(\mathrm{st}(x) \to \Phi(x)) \to \forallst x \,\Phi(x)$ is
	\begin{align*}
	& \existst \tuple{U}', S, \tuple{T} \, \forallst x', \tuple{U}, \tuple{v}' \, \big(\forall y \in Sx'\tuple{U}\tuple{v}' \; \forall \tuple{v} \in \tuple{T} x' \tuple{U} \tuple{v}' \; \forall x \, \\
	& (x = y \to \varphi(\tuple{U}y, \tuple{v}, x)) \to \varphi(\tuple{U}'\tuple{U}x', \tuple{v}', x')\big)\;,
	\end{align*}
	and we can take
	\begin{equation*}\begin{matrix*}[l]
		& \tuple{U}' := \lambda \tuple{U}, x'.\tuple{U}x'\;, & S := \lambda x',\tuple{U},\tuple{v}'.\langle x' \rangle\;, \\
		& \tuple{T} := \lambda x', \tuple{U}, \tuple{v}'.\langle \tuple{v}' \rangle \;.
	\end{matrix*}\end{equation*}
	
	\item $\existst x \,\Phi(x) \leftrightarrow \exists x \,(\mathrm{st}(x) \land \Phi(x))\;$.
	
	The interpretation of $\existst x \,\Phi(x) \to \exists x \,(\mathrm{st}(x) \land \Phi(x))$ is
	\begin{align*}
	& \existst Y, \tuple{U}', \tuple{T} \, \forallst x, \tuple{u}, \tuple{s} \, \big(\forall \tuple{v} \in \tuple{T}x\tuple{u}\tuple{s} \; \varphi(\tuple{u},\tuple{v},x) \to \\
	& \exists x' \, \forall \tuple{v}' \in \tuple{s} \, (Yx\tuple{u} = x' \land \varphi(\tuple{U}'x\tuple{u}, \tuple{v}', x'))\big)\;;
	\end{align*}
	so we can take
	\begin{equation*}\begin{matrix*}[l]
		& Y := \lambda x,\tuple{u}.x\;, & \tuple{U}' := \lambda x,\tuple{u}.\tuple{u}\;, \\
		& \tuple{T} := \lambda x,\tuple{u},\tuple{s}.\tuple{s} \;.
	\end{matrix*}\end{equation*}
	
	The interpretation of its converse $\exists x \,(\mathrm{st}(x) \land \Phi(x)) \to \existst x \,\Phi(x)$ is
	\begin{align*}
	& \existst X, \tuple{U}, \tuple{S} \, \forallst y, \tuple{u}', \tuple{v} \,\big(\forall \tuple{s} \in \tuple{S}y\tuple{u}'\tuple{v} \; \exists x' \; \forall \tuple{v}' \in \tuple{s} \, \\
	& (y = x' \land \varphi(\tuple{u}', \tuple{v}', x')) \to \varphi(\tuple{U}y\tuple{u}', \tuple{v}, Xy\tuple{u}')\big)\;,
	\end{align*}
	and we can take
	\begin{equation*}\begin{matrix*}[l]
		& X := \lambda y,\tuple{u}'.y\;, & \tuple{U} := \lambda y,\tuple{u}'.\tuple{u}'\;, \\
		& \tuple{S} := \lambda y,\tuple{u}',\tuple{v}.\langle \langle \tuple{v} \rangle \rangle \;.
	\end{matrix*}\end{equation*}
	\end{enumerate}
	
\item \emph{The axioms for the standardness predicate}.
	\begin{enumerate}[label=(\roman*)]
	\item $\mathrm{st}(x) \land x = y \to \mathrm{st}(y)\;$.
	
	The interpretation of this axiom is
	\begin{equation*}
		\existst Y' \, \forallst x' \,(x = x' \land x = y \to y = Y'x')\;,
	\end{equation*}
	so we can take $Y' := \lambda x'. x'\;$.
	
	\item $\mathrm{st}(a)$ for all closed terms $a$.
	
	We have $(\mathrm{st}(a))^U = \existst x \, (a = x)$, so we can take $x := a$.

	\item $\mathrm{st}(f) \land \mathrm{st}(x) \to \mathrm{st}(fx)$.
	
	The interpretation of this axiom is
	\begin{equation*}
		\existst Y \, \forallst f', x' \,(f = f' \land x = x' \to fx = Yf'x')\;,
	\end{equation*}
	so we can take $Y := \lambda f',x'.f'x'\;$.
	\end{enumerate}
	
\item \emph{The external induction schema}.

As in \cite{van2012functional}, we consider the equivalent external induction \emph{rule}
\begin{equation*}
	\mathsf{IR}^\mathrm{st}: \quad \begin{array}{c}\Phi(0) \quad \forallst n:0 \,(\Phi(n) \to \Phi(n+1)) \\ \hline \forallst n:0 \, \Phi(n) \end{array}\;,
\end{equation*}
from which the external induction schema is obtained by taking $\Phi(m) := \Psi(0) \land \forallst n:0 \, (\Psi(n) \to \Psi(n+1)) \to \Psi(m)\;$.

So, suppose that $(\Phi(n))^U = \existst \tuple{x}\,\forallst \tuple{y} \, \varphi(\tuple{x},\tuple{y},n,\tuple{a})$, and that we have realisers $\tuple{t}_1$, and $\tuple{T}_2, \tuple{T}_3$ for the premises; i.e.
\begin{equation*}
	\text{E-HA}^{\omega*}_\lor + \Delta_\lor \vdash \forall \tuple{y} \, \varphi(\tuple{t}_1, \tuple{y}, 0, \tuple{a})\;,
\end{equation*}
and
\begin{equation*}
	\text{E-HA}^{\omega*}_\lor + \Delta_\lor \vdash \forall n, \tuple{x}, \tuple{y}' \,(\forall \tuple{y} \in \tuple{T}_3 n\tuple{x}\tuple{y}' \, \varphi(\tuple{x},\tuple{y},n, \tuple{a}) \to \varphi(\tuple{T}_2 n\tuple{x}, \tuple{y}', n+1, \tuple{a}))\;.
\end{equation*}

By taking $\tuple{T}_4 := \lambda n.\mathrm{R}\tuple{t}_1 \tuple{T}_2 n$, we obtain, by induction for $\lor$-free formulae in E-HA$^{\omega*}_\lor$, that
\begin{equation*}
	\text{E-HA}^{\omega*}_\lor + \Delta_\lor \vdash \forall n, \tuple{y} \, \varphi(\tuple{T}_4 n, \tuple{y}, n, \tuple{a})\;,
\end{equation*}
which was to be proved.

\item \emph{The principles} $\mathsf{OS}^*_\lor, \mathsf{US}^*_\lor, \mathsf{NU}, \mathsf{AC}^\mathrm{st}, \mathsf{IP}_{\forall \, \lor}^\mathrm{st}$.
	\begin{enumerate}[label=(\roman*)]
	\item $\mathsf{OS}^*_\lor: \quad \forallst s \, \varphi(s) \to \exists s \, (\forallst x \, (x \in s) \land \varphi(s))\;$, with $\varphi$ internal and $\lor$-free.
	
	This is interpreted as
	\begin{equation*}
		\existst S \, \forallst s' \,\big(\forall s \in Ss' \, \varphi(s) \to \exists s\,(s' \subseteq s \land \varphi(s))\big)\;,
	\end{equation*}
	and we can take $S := \lambda s'.\langle s' \rangle\;$.
	
	\item $\mathsf{US}^*_\lor: \quad \forall s \, (\forallst x \, (x \in s) \to \varphi(s)) \to \existst s \, \varphi(s)\;$, with $\varphi$ internal and $\lor$-free.
	
	The interpretation of this axiom schema is
	\begin{equation*}
		\existst S \, \forallst s' \,\big(\forall s \,(s' \subseteq s \to \varphi(s)) \to \varphi(Ss')\big)\;;
	\end{equation*}
	so we can take $S := \lambda s'. s'\;$.
	\end{enumerate}
For the principles $\mathsf{NU}, \mathsf{AC}^\mathrm{st}, \mathsf{IP}_{\forall \, \lor}^\mathrm{st}$, we can just observe that the premise and the conclusion have identical interpretations, so it is trivial to find realisers for the implication. We do the first as an example.
	\begin{enumerate}[label=(\roman*)]
	\item[(iii)] $\mathsf{NU}: \quad \forall y \, \existst x \, \Phi(x,y) \to \existst x \, \forall y \, \Phi(x,y)\;$.
	
	Let $\Phi(x,y)^U := \existst \tuple{u} \, \forallst \tuple{v} \, \varphi(\tuple{u}, \tuple{v}, x, y)$. Both the premise and the conclusion are interpreted as
	\begin{equation*}
		\existst x, \tuple{u} \, \forallst \tuple{v} \, \forall y \, \varphi(\tuple{u}, \tuple{v}, x, y)\;;
	\end{equation*}
	so the implication is interpreted as
	\begin{equation*}
		\existst X', \tuple{U}', \tuple{S} \, \forallst x, \tuple{u}, \tuple{v}' \, \big(\forall \tuple{v} \in \tuple{S}x\tuple{u}\tuple{v}' \, \forall y \, \varphi(\tuple{u},\tuple{v}, x, y) \to \forall y \, \varphi(\tuple{U}'x\tuple{u}, \tuple{v}', X'x\tuple{u}, y)\big)\;,
	\end{equation*}
	and we can take
	\begin{equation*}\begin{matrix*}[l]
		& X' := \lambda x,\tuple{u}.x\;, & \tuple{U}' := \lambda x,\tuple{u}.\tuple{u}\;, \\
		& \tuple{S} := \lambda x,\tuple{u},\tuple{v}'.\langle \tuple{v}' \rangle \;.
	\end{matrix*}\end{equation*}
	\end{enumerate}
\end{enumerate}
This concludes the proof. \end{proof}

As usual, soundness of an interpretation leads to a conservation result.
\begin{cor} \label{cor:conserv_u}
The system
\begin{equation*}
	\text{\emph{E-HA}}^{\omega*}_{\mathrm{st}\lor} + \mathsf{OS}^*_\lor + \mathsf{US}^*_\lor + \mathsf{NU} + \mathsf{AC}^\mathrm{st} + \mathsf{IP}_{\forall \, \lor}^\mathrm{st}
\end{equation*}
is conservative with respect to $\lor$-free formulae of \emph{E-HA}$^{\omega*}_\lor$.
\end{cor}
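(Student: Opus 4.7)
The plan is to apply the soundness theorem with $\Delta_\lor = \emptyset$ and observe that internal $\lor$-free formulae are $U$-interpreted as themselves.

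First I would check by induction on the logical structure of a $\lor$-free formula $\varphi$ of E-HA$^{\omega*}_\lor$ that $\varphi^U = \varphi_U = \varphi$, with both tuples of witnesses $\tuple{x}$ and challenges $\tuple{y}$ empty. For atomic $\varphi$ this holds by the base clause of the $U$ translation. For the inductive step, since $\varphi$ contains no standardness predicate, no external quantifier, and no disjunction, only the clauses for $\land$, $\to$, $\exists$ and $\forall$ apply; in each case, the fact that the sub-formulae already have empty tuples makes the translation collapse. For instance, $(\varphi \to \psi)^U = \existst \tuple{U}, \tuple{Y}\, \forallst \tuple{x}, \tuple{v}\, (\forall \tuple{y} \in \tuple{Y}\tuple{x}\tuple{v}\, \varphi_U \to \psi_U(\tuple{U}\tuple{x}, \tuple{v}))$ becomes simply $\varphi \to \psi$ once the empty-tuple simplification is applied; similarly $(\exists z\,\varphi(z))^U$ reduces to $\exists z\,\varphi(z)$, since the inner $\forall \tuple{y}'\in\tuple{y}$ ranges over an empty index, and the outer external quantifiers are over empty tuples.

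Now suppose $\varphi$ is a $\lor$-free formula of E-HA$^{\omega*}_\lor$ such that
\begin{equation*}
  \text{E-HA}^{\omega*}_{\mathrm{st}\lor} + \mathsf{OS}^*_\lor + \mathsf{US}^*_\lor + \mathsf{NU} + \mathsf{AC}^\mathrm{st} + \mathsf{IP}_{\forall \, \lor}^\mathrm{st} \vdash \varphi\;.
\end{equation*}
By the soundness theorem (with $\Delta_\lor = \emptyset$), we extract a tuple of closed terms $\tuple{t}$ such that $\text{E-HA}^{\omega*}_\lor \vdash \forall \tuple{y}\, \varphi_U(\tuple{t}, \tuple{y}, \tuple{a})$. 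By the first paragraph, $\tuple{t}$ and $\tuple{y}$ are in fact empty and $\varphi_U$ is $\varphi$ itself, so this reduces to $\text{E-HA}^{\omega*}_\lor \vdash \varphi$, which is the desired conservativity.

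There is no real obstacle here; the only point of minor care is the bookkeeping of empty tuples in the inductive check of step one, and the observation that the induction schema used in both systems is the same restricted schema $\mathsf{IA}_\lor$, so that the target system of the soundness theorem is exactly E-HA$^{\omega*}_\lor$.
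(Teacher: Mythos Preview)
Your proof is correct and follows essentially the same approach as the paper, which simply notes that the result ``follows immediately from the previous theorem'' (the soundness theorem); your version just makes explicit the key observation---that internal $\lor$-free formulae are $U$-interpreted as themselves with empty witness and challenge tuples---which the paper leaves implicit here (though it is stated explicitly for the analogous $D_\mathrm{st}$ conservativity result).
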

\begin{proof}
Follows immediately from the previous theorem.
\end{proof}

Now, let
\begin{equation*}
	\text{H} := \text{E-HA}^{\omega*}_{\mathrm{st}\lor} + \mathsf{OS}^*_\lor + \mathsf{US}^*_\lor + \mathsf{NU} + \mathsf{AC}^\mathrm{st} + \mathsf{IP}_{\forall \, \lor}^\mathrm{st}\;.
\end{equation*}

\begin{thm}[Characterisation of uniform Diller-Nahm] \label{thm:char_udn}

Let $\Phi$ be a formula in the language of $\text{\emph{E-HA}}^{\omega*}_{\mathrm{st}\lor}$.
\begin{enumerate}[label=(\alph*)]
	\item $\text{\emph{H}} \vdash \Phi \leftrightarrow \Phi^U\;$.
	\item If for all formulae $\Psi$ of the language of \emph{E-HA}$^{\omega*}_{\mathrm{st}\lor})$, with $\Psi^U = \existst \tuple{x} \, \forallst \tuple{y} \, \psi(\tuple{x}, \tuple{y})$,
	\begin{equation*}
		\text{\emph{H}} + \Phi \vdash \Psi
	\end{equation*}
	implies that there exist closed terms $\tuple{t}$ such that
	\begin{equation*}
		\text{\emph{E-HA}}^{\omega*}_\lor \vdash \forall \tuple{y} \, \psi(\tuple{t},\tuple{y})\;
	\end{equation*}
	holds, then $\text{\emph{H}} \vdash \Phi\,$.
\end{enumerate}
\end{thm}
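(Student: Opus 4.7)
\emph{Proof plan.} The plan is to establish part (a) by induction on the logical structure of $\Phi$, and then to deduce part (b) from (a) by a short diagonal argument at $\Psi := \Phi$.

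For the induction in part (a), the atomic cases and the $\mathrm{st}(x)$ case are immediate (the latter from extensionality of the standardness predicate applied to $\existst y\,(y = x)$), and $\land$ and $\lor$ reduce routinely to intuitionistic logic under the inductive hypothesis. Among the external-quantifier cases, $\existst z\,\existst \tuple{x}$ trivially collapses to $\existst (z,\tuple{x})$, while $\forallst z$ is handled by a single application of $\mathsf{AC}^\mathrm{st}$ to the $\forallst z\,\existst \tuple{x}\,\forallst \tuple{y}$ pattern produced by the inductive hypothesis. The three substantive cases are $\forall z$, $\exists z$, and $\to$: for $\forall z$, the only issue is swapping $\forall z$ past $\existst \tuple{x}$, which is precisely $\mathsf{NU}$; for $\exists z$, after the free commutation of $\exists z$ past $\existst \tuple{x}$, one needs an equivalence between $\existst \tuple{x}\,\exists z\,\forallst \tuple{y}'\,\varphi$ and $\existst \tuple{x}\,\forallst \tuple{y}\,\exists z\,\forall \tuple{y}' \in \tuple{y}\,\varphi$, where the forward direction uses Lemma \ref{lem:st_seq}.(c) (elements of a standard sequence are standard) and the converse applies $\mathsf{OS}^*_\lor$ to the internal, $\lor$-free formula $\exists z\,\forall \tuple{y}' \in \tuple{y}\,\varphi$, producing a hyperfinite $\tuple{y}$ that covers every standard $\tuple{y}'$ and hence a single $z$ witnessing $\forallst \tuple{y}'\,\varphi$.

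The implication case will be the main obstacle, as it must chain together $\mathsf{IP}_{\forall\,\lor}^\mathrm{st}$, a de-herbrandised Markov principle, and $\mathsf{AC}^\mathrm{st}$. Starting from $(\existst \tuple{x}\,\forallst \tuple{y}\,\varphi) \to (\existst \tuple{u}\,\forallst \tuple{v}\,\psi)$, I would first turn the outer $\existst \tuple{x}$ into $\forallst \tuple{x}$ by intuitionistic logic, then apply $\mathsf{IP}_{\forall\,\lor}^\mathrm{st}$ to pull $\existst \tuple{u}$ to the front (valid since $\varphi$ is $\lor$-free), and push $\forallst \tuple{v}$ forward, obtaining $\forallst \tuple{x}\,\existst \tuple{u}\,\forallst \tuple{v}\,(\forallst \tuple{y}\,\varphi \to \psi)$. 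Next I would invoke the $\lor$-free version $\mathsf{HGMP}^\mathrm{st}_\lor$ of the herbrandised generalised Markov principle, derivable from $\mathsf{US}^*_\lor$ by the same argument as Proposition \ref{prop:hgmpst} (all intermediate formulas remaining $\lor$-free), to replace $\forallst \tuple{y}\,\varphi \to \psi$ by $\existst \tuple{s}\,(\forall \tuple{y} \in \tuple{s}\,\varphi \to \psi)$. Two successive applications of $\mathsf{AC}^\mathrm{st}$ then extract the Skolem functionals $\tuple{Y}$ and $\tuple{U}$, producing exactly $(\Phi \to \Psi)^U$. Each of these steps admits a routine converse (weakening, substitution, specialisation), so the equivalence follows in both directions.

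For part (b), I would set $\Psi := \Phi$; the hypothesis then trivially furnishes closed terms $\tuple{t}$ with $\text{E-HA}^{\omega*}_\lor \vdash \forall \tuple{y}\,\varphi_U(\tuple{t}, \tuple{y})$, where $\Phi^U = \existst \tuple{x}\,\forallst \tuple{y}\,\varphi_U(\tuple{x}, \tuple{y})$. Since closed terms are provably standard in $\text{H}$ by the standardness axioms, this upgrades to $\existst \tuple{x}\,\forallst \tuple{y}\,\varphi_U(\tuple{x}, \tuple{y}) = \Phi^U$, and part (a) delivers $\text{H} \vdash \Phi$.
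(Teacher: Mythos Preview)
Your proof is correct and follows essentially the same route as the paper: induction on the structure of $\Phi$ for (a), handling $\forall z$ via $\mathsf{NU}$, $\exists z$ via $\mathsf{OS}^*_\lor$ (the paper phrases this as $\mathsf{I}_\lor$, but that is the same argument), and $\to$ via the chain $\mathsf{IP}_{\forall\,\lor}^\mathrm{st}$ / $\mathsf{HGMP}^\mathrm{st}_\lor$ (from $\mathsf{US}^*_\lor$) / two uses of $\mathsf{AC}^\mathrm{st}$; then the diagonal choice $\Psi := \Phi$ for (b). One small correction: the $\lor$ case is not purely ``routine intuitionistic logic''—the $U$-translation introduces $\existst z{:}0$ with a case split on $z=0$, and pulling the inner $\existst \tuple{x}$, $\existst \tuple{u}$ out past the implications $z=0 \to (\cdot)$ and $\neg\, z=0 \to (\cdot)$ needs either $\mathsf{IP}_{\forall\,\lor}^\mathrm{st}$ (as the paper does) or an explicit case analysis using external decidability of $z=0$ together with inhabitedness of all types.
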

\begin{proof}
We prove (a) by induction on the logical structure of $\Phi$. For $\Phi \equiv \varphi$ internal atomic, obviously $\text{H} \vdash \varphi \leftrightarrow \varphi^U$.
	
Let $\Phi \equiv \mathrm{st}(x)$. If $x$ is standard, it follows that $\existst y \, (x = y)$, by taking $y := x$. Conversely, if $\existst y \, (x = y)$, by the first axiom for the standardness predicate it follows that $x$ is standard. Hence,
\begin{equation*}
	\text{H} \vdash \mathrm{st}(x) \leftrightarrow \existst y \, (x = y)\;.
\end{equation*}

For the induction hypothesis, using an appropriate embedding of tuples of types into higher types, and a compatible coding of tuples of terms \cite[1.6.17]{troelstra1973metamathematical}, we can assume, given formulae $\Phi$ and $\Psi$, that there exist internal, $\lor$-free formulae $\varphi$, $\psi$ such that
\begin{align*}
	\text{H} & \vdash \Phi(x) \leftrightarrow \existst x \, \forallst y \, \varphi(x,y)\;, \\
	\text{H} & \vdash \Psi(x) \leftrightarrow \existst u \, \forallst v \, \psi(u,v)\;.
\end{align*}

\begin{enumerate}[label=(\roman*)]
	\item For $\land$, by intuitionistic logic,
	\begin{equation*}
		\existst x \, \forallst y \, \varphi(x,y) \land \existst u \, \forallst v \, \psi(u,v)
	\end{equation*}
	is equivalent to
	\begin{equation*}
		\existst x, u \, \forallst y,v \, (\varphi(x,y) \land \psi(u,v))\;.
	\end{equation*}
	
	\item For $\lor$,
	\begin{equation*}
		\existst x \, \forallst y \, \varphi(x,y) \lor \existst u \, \forallst v \, \psi(u,v)
	\end{equation*}
	is equivalent in H to
	\begin{equation*}
		\existst z:0 \, (z = 0 \to \existst x \, \forallst y \, \varphi(x,y) \land \neg\, z= 0 \to \existst u \, \forallst v \, \psi(u,v))\;.
	\end{equation*}
	By $\mathsf{IP}_{\forall \, \lor}^\mathrm{st}$, this is equivalent to
	\begin{equation*}
		\existst z:0 \, \big(\existst x \, \forallst y \,(z = 0 \to \varphi(x,y)) \land \existst u \, \forallst v \, (\neg\, z = 0 \to \psi(u,v))\big)\;,
	\end{equation*}
	and we are back to the case of conjunction.
	
	\item For $\to$, we proceed as with Diller-Nahm implication. By intuitionistic logic and the principle $\mathsf{IP}_{\forall \, \lor}^\mathrm{st}$,
	\begin{equation*}
		\existst x \, \forallst y \, \varphi(x,y) \to \existst u \, \forallst v \, \psi(u,v)
	\end{equation*}
	is equivalent to
	\begin{equation*}
		\forallst x \, \existst u \, \forallst v \, (\forallst y \, \varphi(x,y) \to \psi(u,v))\;.
	\end{equation*}
	Now, adapting Proposition \ref{prop:hgmpst}, we see that $\text{E-HA}^{\omega*}_{\mathrm{st}\lor} + \mathsf{US}^*_\lor \vdash \mathsf{HGMP}^\mathrm{st}_\lor$, so this is equivalent to
	\begin{equation*}
		\forallst x \, \existst u \, \forallst v \, \existst s \, (\forall y \in s \, \varphi(x,y) \to \psi(u,v))\;.
	\end{equation*}
	Two applications of $\mathsf{AC}^\mathrm{st}$ then lead to
	\begin{equation*}
		\existst U, S \, \forallst x, v \, (\forall y \in Sxv \, \varphi(x,y) \to \psi(Ux,v))\;.
	\end{equation*}
	
	\item For $\exists z$, adapting Proposition \ref{prop:ios}, we see that $\text{E-HA}^{\omega*}_{\mathrm{st}\lor} + \mathsf{OS}^*_\lor \vdash \mathsf{I}_\lor$; therefore
	\begin{equation*}
		\exists z \, \existst x \, \forallst y \, \varphi(x,y,z)
	\end{equation*}
	is equivalent to
	\begin{equation*}
		\existst x \, \forallst s \, \exists z \, \forall y \in s \, \varphi(x,y,z)\;.
	\end{equation*}
	
	\item For $\forall z$, we use that by $\mathsf{NU}$
	\begin{equation*}
		\forall z \, \existst x \, \forallst y \, \varphi(x,y,z)
	\end{equation*}
	is equivalent to
	\begin{equation*}
		\existst x \, \forallst y \, \forall z \, \varphi(x,y,z)\;.
	\end{equation*}
	
	\item For $\existst z$, nothing really needs to be done.
	
	\item For $\forallst z$, we just use $\mathsf{AC}^\mathrm{st}$ once in order to obtain that
	\begin{equation*}
		\forallst z \, \existst x \, \forallst y \, \varphi(x,y,z)
	\end{equation*}
	is equivalent to
	\begin{equation*}
		\existst X \, \forallst y,z \, \varphi(Xz,y,z)\;.
	\end{equation*}	
\end{enumerate}
This proves item (a). For (b), suppose $\Phi$ satisfies the condition, and that $\Phi^U = \existst \tuple{x} \, \forallst \tuple{y} \, \varphi(\tuple{x}, \tuple{y})$. Then, from
\begin{equation*}
	\text{H} + \Phi \vdash \Phi
\end{equation*}
it follows that there exist closed terms $\tuple{t}$ such that
\begin{equation*}
	\text{E-HA}^{\omega*}_\lor \vdash \forall \tuple{y} \, \varphi(\tuple{t},\tuple{y})\;.
\end{equation*}
From this, we obtain $\text{E-HA}^{\omega*}_{\mathrm{st}\lor} \vdash \forallst \tuple{y} \, \varphi(\tuple{t},\tuple{y})\;$, whence
\begin{equation*}
	\text{E-HA}^{\omega*}_{\mathrm{st}\lor} \vdash \existst \tuple{x} \, \forallst \tuple{y} \, \varphi(\tuple{x},\tuple{y})\;;
\end{equation*}
so $\text{H} \vdash \existst \tuple{x} \, \forallst \tuple{y} \, \varphi(\tuple{x},\tuple{y})\;$ as well. But then, by the equivalence of (a),
\begin{equation*}
	\text{H} \vdash \Phi
\end{equation*}
which was to be proved.
\end{proof}

We now show how the uniform Diller-Nahm interpretation may be used to extract programs from proofs, and eliminate instances of its characteristic principles.
\begin{thm}[Program extraction by the $U$ interpretation]
Let $\forallst x \, \existst y\, \varphi(x,y)$ be a sentence of $\text{\emph{E-HA}}^{\omega*}_{\mathrm{st}\lor}$, with $\varphi$ internal and $\lor$-free, and let $\Delta_\lor$ be a set of internal, $\lor$-free sentences. If
\begin{equation*}
	\text{\emph{E-HA}}^{\omega*}_{\mathrm{st}\lor} + \mathsf{OS}^*_\lor + \mathsf{US}^*_\lor + \mathsf{NU} + \mathsf{AC}^\mathrm{st} + \mathsf{IP}_{\forall \, \lor}^\mathrm{st} + \Delta_\lor \vdash \forallst x \, \existst y\, \varphi(x,y)\;,
\end{equation*}
then from the proof we can extract a closed term $T$ such that
\begin{equation*}
	\text{\emph{E-HA}}^{\omega*}_\lor + \Delta_\lor \vdash \forall x \, \varphi(x, Tx)\;.
\end{equation*}
\end{thm}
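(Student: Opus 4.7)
The plan is to apply the soundness theorem for the $U$ interpretation to the given derivation, and then read off the witnessing term from the extracted realiser. So I first compute the $U$-translation of the statement $\forallst x\,\existst y\,\varphi(x,y)$. Because $\varphi$ is internal and $\lor$-free, the translation clauses for internal atomic formulae, $\existst$, and $\forallst$ collapse almost completely: $\varphi^U = \varphi$ has empty existential and universal tuples, then
\begin{equation*}
    (\existst y\,\varphi(x,y))^U \;=\; \existst y\,\varphi(x,y),
\end{equation*}
playing the role of a formula with existential witness $y$ and empty universal tuple, and finally
\begin{equation*}
    (\forallst x\,\existst y\,\varphi(x,y))^U \;=\; \existst Y\,\forallst x\,\varphi(x, Yx),
\end{equation*}
by the clause for $\forallst z$. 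Here $Y$ is of type $\sigma\to\tau$, where $x:\sigma$ and $y:\tau$.

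Now I would invoke the soundness theorem directly: from the hypothesised derivation, it provides a closed term $T$ such that
\begin{equation*}
    \text{E-HA}^{\omega*}_\lor + \Delta_\lor \vdash \forall x\,\varphi(x, Tx),
\end{equation*}
which is exactly the statement to be proved. There is nothing left to do beyond setting $T := Y$, the extracted witness of the outermost $\existst$.

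There is no real obstacle here; the only thing to be careful about is checking that the $U$-translation of the sentence really reduces as claimed, in particular that the idempotence of the translation on formulae of the shape $\existst\tuple{x}\,\forallst\tuple{y}\,\varphi$ with $\varphi$ internal and $\lor$-free kicks in at each step, so that no extraneous tuples appear. Once that is verified, the result is an immediate corollary of soundness, exactly parallel to the way Corollary \ref{cor:conserv_u} followed.
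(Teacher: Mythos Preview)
Your argument is correct and is essentially identical to the paper's own proof: both compute that $(\forallst x\,\existst y\,\varphi(x,y))^U = \existst f\,\forallst x\,\varphi(x,fx)$ and then read off the witness $T$ directly from the soundness theorem. You simply spell out the intermediate steps of the translation that the paper leaves implicit.
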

\begin{proof}
The $U$ translation of $\forallst x \, \existst y\, \varphi(x,y)$ is
\begin{equation*}
	\existst f \, \forallst x \, \varphi(x,fx)\;,
\end{equation*}
so the thesis immediately follows from the soundness theorem.
\end{proof}

Finally, we derive a few properties of the system H, which follow from the properties of the uniform Diller-Nahm interpretation.

\begin{prop}
The system $\text{\emph{E-HA}}^{\omega*}_{\mathrm{st}\lor} + \mathsf{OS}^*_\lor + \mathsf{US}^*_\lor + \mathsf{NU} + \mathsf{AC}^\mathrm{st} + \mathsf{IP}_{\forall \, \lor}^\mathrm{st}$ is closed under the restricted transfer rules
\begin{align*}
 	& \mathsf{TR}_{\forall\lor}: \quad \begin{array}{c} \forallst x:\sigma \, \varphi(x) \\ \hline \forall x:\sigma \, \varphi(x)  \end{array}\;, \\
 	& \mathsf{TR}_{\exists\lor}: \quad \begin{array}{c} \exists x:\sigma \, \varphi(x) \\ \hline \existst x:\sigma \, \varphi(x)  \end{array}\;,
\end{align*}
where $\varphi$ ranges over internal $\lor$-free formulae.
\end{prop}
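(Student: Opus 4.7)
The plan is to apply the soundness theorem for the uniform Diller--Nahm interpretation (proved above) and read off both rules from the shape of the $U$-translation. For an internal and $\lor$-free $\varphi$, the translation $\varphi_U$ is just $\varphi$ with empty realiser tuples $\tuple{x},\tuple{y}$; propagating this through the clauses, one finds $(\forallst x \, \varphi(x))^U = \forallst x \, \varphi(x)$ with empty $\existst$-component, and $(\exists x \, \varphi(x))^U = \exists x\, \varphi(x)$ with both $\existst$- and $\forallst$-components empty.

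For $\mathsf{TR}_{\forall\lor}$, suppose $\text{H} \vdash \forallst x \, \varphi(x)$. The soundness theorem, applied with $\Delta_\lor = \emptyset$, returns an empty tuple of realisers and supplies $\text{E-HA}^{\omega*}_\lor \vdash \forall x \, \varphi(x)$: the $\forallst$-bound $x$ of $\Phi^U$ is precisely the universal prefix of the conclusion of soundness, stripped of its standardness annotation. Since $\text{E-HA}^{\omega*}_\lor$ is contained in H, we obtain $\text{H} \vdash \forall x \, \varphi(x)$, as required.

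For $\mathsf{TR}_{\exists\lor}$, soundness applied to $\text{H} \vdash \exists x\, \varphi(x)$ only reproduces $\text{E-HA}^{\omega*}_\lor \vdash \exists x\, \varphi(x)$ with no witness extracted: the $U$-interpretation deliberately assigns no realiser to an internal existential. The main obstacle is therefore to pass from this to a \emph{closed} witnessing term $t$. I would deal with it by invoking the term existence property of $\text{E-HA}^{\omega*}_\lor$ for internal $\lor$-free existentials, which I would obtain by applying the ordinary Diller--Nahm interpretation directly to $\text{E-HA}^{\omega*}_\lor$: its translation preserves $\lor$-freeness (the clauses for atomic formulae, $\land$, $\to$, $\exists$ and $\forall$ introduce no new disjunctions), so the induction needed by its soundness argument is available through $\mathsf{IA}_\lor$, and in contrast to $U$ it does bring internal existential witnesses to the front and thus extracts them as closed terms. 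The resulting closed $t$ with $\text{E-HA}^{\omega*}_\lor \vdash \varphi(t)$ is standard in H by the axiom that closed terms are standard, so $\text{H} \vdash \mathrm{st}(t) \land \varphi(t)$, hence $\text{H} \vdash \existst x\, \varphi(x)$.
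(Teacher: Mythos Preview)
Your proof is correct and follows essentially the same route as the paper: soundness of the $U$-translation for $\mathsf{TR}_{\forall\lor}$, and soundness/conservativity plus the existence property of $\text{E-HA}^{\omega*}_\lor$ for $\mathsf{TR}_{\exists\lor}$. The only difference is that the paper simply asserts that $\text{E-HA}^{\omega*}_\lor$ ``inherits'' the existence property from $\text{E-HA}^{\omega*}$, whereas you justify this explicitly by running ordinary Diller--Nahm inside $\text{E-HA}^{\omega*}_\lor$ and noting that its clauses preserve $\lor$-freeness; your argument is the concrete content behind the paper's inheritance claim (and is in fact needed, since mere containment in $\text{E-HA}^{\omega*}$ would only yield a witness provable in $\text{E-HA}^{\omega*}$, which $\text{H}$ does not extend).
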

\begin{proof}
This is an adaptation of \cite[Proposition 5.12]{van2012functional}. Suppose
\begin{equation*}
	\text{H} \vdash \forallst x \, \varphi(x)\;.
\end{equation*}
By the soundness theorem, it follows that
\begin{equation*}
	\text{E-HA}^{\omega*}_\lor \vdash \forall x \, \varphi(x)\;,
\end{equation*}
which, since H is an extension of E-HA$^{\omega*}_\lor$, implies $\text{H} \vdash \forallst x \, \varphi(x)\;$.

Now, suppose
\begin{equation*}
	\text{H} \vdash \exists x \, \varphi(x)\;;
\end{equation*}
by conservativity, this implies $\text{E-HA}^{\omega*}_\lor \vdash \exists x \, \varphi(x)$. Being a subsystem of $\text{E-HA}^{\omega*}$, $\text{E-HA}^{\omega*}_\lor$ inherits the existence property; so we can find a closed term $t$ such that
\begin{equation*}
	\text{E-HA}^{\omega*}_\lor \vdash \varphi(t)\;.
\end{equation*}
Since $t$ is provably standard in H, this implies $\text{H} \vdash \existst x \, \varphi(x)\;$.
\end{proof}

\begin{prop}
The system $\text{\emph{H}} := \text{\emph{E-HA}}^{\omega*}_{\mathrm{st}\lor} + \mathsf{OS}^*_\lor + \mathsf{US}^*_\lor + \mathsf{NU} + \mathsf{AC}^\mathrm{st} + \mathsf{IP}_{\forall \, \lor}^\mathrm{st}$ has the following form of the existence property: if
\begin{equation*}
	\text{\emph{H}} \vdash \existst x \, \Phi(x)\;,
\end{equation*}
then there exists a closed term $t$ such that $\text{\emph{H}} \vdash \Phi(t)$.
\end{prop}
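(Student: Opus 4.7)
The plan is to combine the soundness theorem with the characterisation theorem (Theorem \ref{thm:char_udn}), exactly as one does for existence properties under Dialectica-style interpretations. First I would compute the $U$-translation of the hypothesis: if $\Phi(x)^U = \existst \tuple{u}\,\forallst\tuple{y}\,\varphi_U(\tuple{u},\tuple{y},x)$, then by the clause for $\existst$ in the definition of the $U$-translation,
\begin{equation*}
(\existst x \, \Phi(x))^U \;=\; \existst x, \tuple{u}\,\forallst \tuple{y}\,\varphi_U(\tuple{u},\tuple{y},x)\;,
\end{equation*}
with $\varphi_U$ internal and $\lor$-free.

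Next I would apply soundness of the $U$-interpretation (with $\Delta_\lor$ empty) to the assumption $\text{H}\vdash \existst x\,\Phi(x)$. This yields a closed term $t$ and closed tuples $\tuple{s}$ such that
\begin{equation*}
\text{E-HA}^{\omega*}_\lor \vdash \forall \tuple{y}\, \varphi_U(\tuple{s},\tuple{y},t)\;.
\end{equation*}
Since $\text{H}$ extends $\text{E-HA}^{\omega*}_\lor$, the same formula is provable in $\text{H}$, and we may prefix it with the external universal quantifier to get $\text{H}\vdash \forallst \tuple{y}\,\varphi_U(\tuple{s},\tuple{y},t)$. Because $\tuple{s}$ is a tuple of closed terms, it is provably standard by the standardness axioms, and therefore
\begin{equation*}
\text{H} \vdash \existst \tuple{u}\,\forallst \tuple{y}\,\varphi_U(\tuple{u},\tuple{y},t)\;,
\end{equation*}
which is exactly $\text{H}\vdash \Phi(t)^U$.

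Finally, I would invoke part (a) of the characterisation theorem, which gives $\text{H}\vdash \Phi(t)\leftrightarrow \Phi(t)^U$, so that $\text{H}\vdash \Phi(t)$, as required. There is no real obstacle here: the only thing to be careful about is the correct reading of the $U$-clause for $\existst$, which, unlike the $D_\mathrm{st}$-clause, supplies a genuine witness rather than a finite sequence of potential witnesses — this is precisely the feature of the ``de-herbrandised'' interpretation that makes the existence property go through in this clean form.
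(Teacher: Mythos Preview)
Your proof is correct and follows essentially the same route as the paper's: apply soundness to extract closed realisers $t,\tuple{s}$ from $\text{H}\vdash\existst x\,\Phi(x)$, lift the resulting $\text{E-HA}^{\omega*}_\lor$-derivation into $\text{H}$, use standardness of closed terms to reintroduce $\existst\tuple{u}$, and conclude via the characterisation theorem. The only cosmetic difference is that the paper first invokes the characterisation theorem to replace $\Phi$ by $\Phi^U$ before applying soundness, whereas you compute $(\existst x\,\Phi(x))^U$ directly from the $\existst$-clause; by idempotence of the $U$-translation these amount to the same thing.
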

\begin{proof}
Let $\Phi(x)^U = \existst \tuple{u} \, \forallst \tuple{v} \, \varphi(x, \tuple{u},\tuple{v})$. By the characterisation theorem, H proves that $\Phi$ is equivalent to its $U$ translation; so, if $\text{H} \vdash \existst x \, \Phi(x)$,
\begin{equation*}
	\text{H} \vdash \existst x, \tuple{u} \, \forallst \tuple{v} \, \varphi(x, \tuple{u}, \tuple{v})\;.
\end{equation*}
By soundness of uniform Diller-Nahm, we can extract closed terms $t, \tuple{T}$ such that
\begin{equation*}
	\text{E-HA}^{\omega*}_\lor \vdash \forall \tuple{v} \, \varphi(t, \tuple{T}, \tuple{v})\;;
\end{equation*}
which, by conservativity, and weakening the quantifier, implies
\begin{equation*}
	\text{H} \vdash \forallst \tuple{v} \, \varphi(t, \tuple{T}, \tuple{v})\;.
\end{equation*}
Since the terms in $\tuple{T}$ are provably standard in H, we obtain
\begin{equation*}
	\text{H} \vdash \existst \tuple{u} \, \forallst \tuple{v} \, \varphi(t, \tuple{u}, \tuple{v})\;,
\end{equation*}
which, again by the characterisation theorem, implies $\text{H} \vdash \Phi(t)$.
\end{proof}

\begin{cor}
The system $\text{\emph{E-HA}}^{\omega*}_{\mathrm{st}\lor} + \mathsf{OS}^*_\lor + \mathsf{US}^*_\lor + \mathsf{NU} + \mathsf{AC}^\mathrm{st} + \mathsf{IP}_{\forall \, \lor}^\mathrm{st}$ has the disjunction property.
\end{cor}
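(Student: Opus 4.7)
The plan is to derive the disjunction property from the existence property just established, using the fact that disjunction is coded by an external existential quantifier over type $0$ in this system.

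Suppose $\text{H} \vdash \Phi \lor \Psi$. The $U$-translation of disjunction is
\begin{equation*}
(\Phi \lor \Psi)^U \;=\; \existst z:0,\tuple{x},\tuple{u}\,\forallst \tuple{y},\tuple{v}\,\bigl(z = 0 \to \varphi_U(\tuple{x},\tuple{y}) \,\land\, \neg\, z = 0 \to \psi_U(\tuple{u},\tuple{v})\bigr),
\end{equation*}
so by the characterisation theorem (Theorem \ref{thm:char_udn}(a)), H proves $\existst z:0 \, \bigl((z = 0 \to \Phi) \land (\neg\, z = 0 \to \Psi)\bigr)$. First I would apply the existence property (the preceding proposition) to this external existential statement to extract a closed term $t:0$ with
\begin{equation*}
\text{H} \vdash (t = 0 \to \Phi) \land (\neg\, t = 0 \to \Psi).
\end{equation*}

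The next step uses that $t$ is a closed term of type $0$: in $\text{E-HA}^{\omega*}$, closed terms of type $0$ reduce to numerals, so there is a numeral $\underline{n}$ with $\text{E-HA}^{\omega*} \vdash t = \underline{n}$, hence $\text{H} \vdash t = \underline{n}$. If $n = 0$, then $\text{H} \vdash t = 0$, and modus ponens on the first conjunct yields $\text{H} \vdash \Phi$. If $n > 0$, then $\text{H} \vdash t = \underline{n}$ together with the fact that $\text{E-HA}^{\omega*}$ proves $\underline{n} \neq 0$ gives $\text{H} \vdash \neg\, t = 0$, and modus ponens on the second conjunct yields $\text{H} \vdash \Psi$. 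In either case, H proves one of the two disjuncts, as required.

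I do not foresee a genuine obstacle: the work has already been done in the existence property and in the characterisation theorem, and the decidability step for closed type-$0$ terms is a standard feature of $\text{E-HA}^{\omega*}$ that is inherited by H as a definitional extension. The only point deserving a sentence of care in the written proof is the observation that the decision ``$t = 0$ or $t \neq 0$'' for this particular closed term $t$ does not appeal to the full decidability schema $\forall n{:}0\,(n = 0 \lor \neg\, n = 0)$ (which is not $\lor$-free and would be problematic in H in view of $\mathsf{NU}$), but only to the reduction of the specific closed term to a numeral.
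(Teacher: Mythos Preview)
Your proposal is correct and follows the same approach as the paper: the paper's proof is a one-liner, citing the equivalence $\Phi \lor \Psi \leftrightarrow \existst z{:}0\,(z = 0 \to \Phi \land \neg\,z = 0 \to \Psi)$ in H together with the existence property, and you have simply spelled out the remaining steps (extraction of a numeral from the closed type-$0$ term, and the ensuing case split). Your closing remark that only the reduction of a specific closed term to a numeral is used, rather than the schema $\forall n{:}0\,(n = 0 \lor \neg\,n = 0)$, is apt and worth keeping.
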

\begin{proof}
Follows from the validity of $\Phi \lor \Psi \leftrightarrow \existst z:0 \, (z = 0 \to \Phi \land \neg \, z = 0 \to \Psi)$ in H, and the previous proposition.
\end{proof}

\subsection{De-herbrandisation and the topos $\mathcal{U}$}
Let us examine how de-herbrandisation is reflected in the topos-theoretic analysis. By looking at Lemma \ref{lem:quantifiers}.(b), we see that herbrandisation has a direct categorical analogue in the choice of finite families as $K$-covers. The obvious choice for uniform Diller-Nahm, then, is to consider the smaller topology $K_1$, where covers of an object $C$ are \emph{single} covering morphisms $\{f: D \twoheadrightarrow C\}$. \emph{A fortiori}, $K_1$ is also subcanonical for coherent categories.

\begin{dfn}
The Grothendieck topos $\mathrm{Sh}(\mathfrak{F}\mathbf{Set}, K_1)$ will be called $\mathcal{U}$.
\end{dfn}

The constant objects functor $\Delta_1$ of $\mathcal{U}$ is defined, for all sets $S$, at all filters $\mathcal{F}$ of $\mathfrak{F}\mathbf{Set}$, by
\begin{align*}
	(\Delta_1 S)\mathcal{F} & = \{\alpha: \mathcal{F} \to S \;|\; \alpha \text{ is constant}\}\;.
\end{align*}
We have $\Delta_1 2 \simeq 1+1 =: 2$ in $\mathcal{U}$, for $\Delta_1$ preserves coproducts; unlike in $\mathcal{N}$, there is a proper monomorphism $m: 2 \rightarrowtail \mathbf{y}2$. Moreover, since the sheafification functor $\mathbf{a}$ of $\mathcal{N}$ is itself left adjoint to the inclusion of $\mathcal{N}$ in $\mathcal{U}$, we have that $\mathbf{a}\Delta_1 2 \simeq \mathbf{y}2$. We say that $m$ is a $K$-\emph{dense} morphism.

Indeed, this fact alone characterises the topology of $\mathcal{N}$ with respect to $\mathcal{U}$. We recall a general result about elementary topoi.

\begin{prop}
Let $m: A \rightarrowtail X$ be a monomorphism in a topos $\mathcal{E}$. Then there exists a smallest local operator $j$ on $\mathcal{E}$ such that $m$ is $j$-dense.
\end{prop}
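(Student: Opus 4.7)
The plan is to exhibit the smallest such $j$ as the meet, in the complete lattice of all local operators on $\mathcal{E}$, of the subclass $\mathcal{J}$ of local operators with respect to which $m$ is dense. This class is non-empty: the maximal local operator, which identifies every subobject with the top element of its ambient subobject lattice, trivially makes every monomorphism dense, so $\mathcal{J}$ contains at least one element. So the question reduces to showing that arbitrary meets in the lattice of local operators may be taken pointwise on subobject lattices, and that such a pointwise meet preserves the property of making $m$ dense.

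I would work in the equivalent presentation of local operators as families of closure operators $c^j_X : \mathrm{Sub}(X) \to \mathrm{Sub}(X)$ -- inflationary, monotone, idempotent, stable under binary meets, and natural in pullbacks -- in which $j$-density of $m \rightarrowtail X$ is just the equation $c^j_X(m) = 1_X$. For a family $(j_\alpha)_\alpha$ of local operators, setting $c_X(U) := \bigwedge_\alpha c^{j_\alpha}_X(U)$ produces another such closure operator. Inflationariness, monotonicity and stability under pullback are immediate, since pullback functors preserve meets; compatibility with binary meets is verified by monotonicity in one direction and, in the other, by noting that the diagonal meet $\bigwedge_\alpha (c^{j_\alpha}_X(U) \wedge c^{j_\alpha}_X(V))$ dominates the full double meet $\bigwedge_{\alpha,\beta} (c^{j_\alpha}_X(U) \wedge c^{j_\beta}_X(V)) = c_X(U) \wedge c_X(V)$; idempotence follows from the chain $c_X(c_X(U)) \leq c^{j_\alpha}_X(c^{j_\alpha}_X(U)) = c^{j_\alpha}_X(U)$ for every $\alpha$, combined with inflationariness.

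With these ingredients, the conclusion is essentially immediate: if every $j_\alpha$ belongs to $\mathcal{J}$, then $c_X(m) = \bigwedge_\alpha 1_X = 1_X$, so their pointwise meet belongs to $\mathcal{J}$ as well. Applying this to the whole of $\mathcal{J}$ produces the required smallest element. I expect the main subtlety to lie in the verification of idempotence for the pointwise meet, all other closure-operator properties being essentially formal consequences of the fact that pullbacks and pointwise-defined meets commute with the operations in play.
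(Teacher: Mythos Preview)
Your argument is correct for the class of topoi relevant to the paper (Grothendieck topoi), and the paper itself gives no proof at all---it simply cites \cite[Example A4.5.14.(b)]{johnstone2002sketches}. So in that sense you have done more than the paper does, and your verification that the pointwise meet of a family of universal closure operators is again a universal closure operator (inflationary, monotone, idempotent, preserving binary meets and pullbacks) is clean and correct.

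There is, however, a genuine gap if one reads the proposition in full generality, i.e.\ for an arbitrary elementary topos $\mathcal{E}$. Your construction $c_X(U) := \bigwedge_\alpha c^{j_\alpha}_X(U)$ presupposes that $\mathrm{Sub}(X)$ admits the relevant (possibly large) meets. In a Grothendieck topos this is automatic, but in a general elementary topos subobject lattices are only Heyting algebras and need not be externally complete: think of the free topos, or a realisability topos, where $\mathrm{Sub}(1)$ is not a complete lattice. So ``the complete lattice of all local operators'' is not something you can simply invoke; its completeness is part of what needs to be established, and pointwise meets are not the route to it in general. Johnstone's treatment in A4.5.14(b) avoids this by giving an \emph{explicit} formula for the closure operator associated to $m$ (in terms of the classifying map of $m$ and the internal Heyting structure of $\Omega$), which makes sense in any elementary topos without appeal to external completeness. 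For the paper's application to the Grothendieck topoi $\mathcal{U}$ and $\mathcal{N}$ your argument is perfectly adequate; just be aware that the stated level of generality requires the more explicit construction.
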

\begin{proof}
See \cite[Example A4.5.14.(b)]{johnstone2002sketches}.
\end{proof}


We can now provide a characterisation of $\mathcal{N}$ as a subtopos of $\mathcal{U}$.
\begin{prop} \label{prop:uintom}
Let $j$ be the smallest local operator on $\mathcal{U}$ such that $m: 2 \rightarrowtail \mathbf{y}2$ is $j$-dense. Then $\mathrm{sh}_j(\mathcal{U}) \simeq \mathcal{N}$.
\end{prop}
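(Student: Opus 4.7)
The plan is to identify the smallest local operator $j$ with the local operator $k$ on $\mathcal{U}$ whose subtopos is $\mathcal{N}$; since $K \supseteq K_1$, this $k$ exists. It suffices to prove $j \le k$ and $k \le j$ in the usual ordering of local operators.

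For $j \le k$ we show $m$ is $k$-dense. The pair $\{i_1, i_2 : 1 \to 1+1\}$ is a $K$-cover, so by Lemma~\ref{lem:frommaclaneandmoerdijk} the maps $\mathbf{y}i_1, \mathbf{y}i_2$ are jointly epic in $\mathcal{N}$. Together with the disjointness of coproducts in the topos $\mathcal{N}$, this forces the canonical map $\mathbf{y}1 + \mathbf{y}1 \to \mathbf{y}(1+1)$ to be an isomorphism in $\mathcal{N}$; i.e.\ $\mathbf{a}_k m$ is iso. Hence $m$ is $k$-dense, and by minimality $j \le k$, giving $\mathcal{N} \subseteq \mathrm{sh}_j(\mathcal{U})$.

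For $k \le j$ we must show every $K$-cover becomes $j$-covering. Any $K$-cover $\{f_k: C_k \to C\}_{k=1}^n$ factors as $f_k = [f_1, \ldots, f_n] \circ i_k$, where $[f_1, \ldots, f_n]: \sum_k C_k \to C$ is a regular epimorphism and hence a $K_1$-cover, therefore already $j$-covering. By the composition of covers in a Grothendieck topology, it suffices to show the coprojection family $\{i_k: C_k \to \sum_k C_k\}$ is $j$-covering, equivalently that the canonical mono $\sum_k \mathbf{y}C_k \rightarrowtail \mathbf{y}(\sum_k C_k)$ is $j$-dense in $\mathcal{U}$. By iteration (using that $A_1+\cdots+A_n = (A_1+\cdots+A_{n-1})+A_n$, together with stability of $j$-density under $+\,\mathrm{id}$ in a topos), one may reduce to the case $n = 2$.

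The crux is a pullback calculation. Given $A, B$ in $\mathfrak{F}\mathbf{Set}$, let $\chi: A+B \to 1+1$ be the classifying morphism; pulling $m$ back along $\mathbf{y}\chi$, and using preservation of pullbacks by Yoneda together with distributivity of coproducts over pullbacks in the topos $\mathcal{U}$,
\[
	\mathbf{y}(A+B) \times_{\mathbf{y}(1+1)} (\mathbf{y}1 + \mathbf{y}1) \;\cong\; \mathbf{y}A + \mathbf{y}B,
\]
exhibiting $\mathbf{y}A + \mathbf{y}B \rightarrowtail \mathbf{y}(A+B)$ as a pullback of $m$. Since $j$-density of monomorphisms is stable under pullback, this yields the required density, completing the proof of $k \le j$. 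The main technical obstacle is this pullback identification, particularly the careful interplay between Yoneda, fibre products, and coproducts; the degenerate $n = 0$ case — the empty $K$-cover of the initial object — requires a small separate verification that either $\mathbf{y}0$ is already the initial sheaf of $\mathcal{U}$ or that $0 \rightarrowtail \mathbf{y}0$ is forced to be $j$-dense by the density of $m$.
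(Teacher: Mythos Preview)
Your proposal is correct and follows essentially the same route as the paper: the decisive step in both is the pullback square exhibiting $\mathbf{y}A + \mathbf{y}B \rightarrowtail \mathbf{y}(A+B)$ as a pullback of $m: 2 \rightarrowtail \mathbf{y}2$ along the Yoneda image of the classifying map $A+B \to 1+1$, so that $j$-density of $m$ propagates to all binary coproduct inclusions. The paper compresses your two-inequality structure by first observing that $K$ is the smallest topology over $K_1$ making binary sum inclusions covering, and then arguing only the nontrivial direction (your $k \leq j$); the direction $j \leq k$ is left implicit, since $m$ is itself the instance $A = B = 1$ of the general coproduct inclusion and is therefore $K$-dense. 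Your explicit treatment of $j \leq k$, the reduction from $n$-ary to binary covers, and the remark on the degenerate $n=0$ case are all sound elaborations that the paper simply elides.
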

\begin{proof}
First observe that the relevant definitions imply that the $K$-covering families are precisely those finite families $\{ \mathcal{G}_k \to \mathcal{F} \}_{k=1}^n$ such that $\mathcal{G}_1 + \mathcal{G}_2 + \ldots + \mathcal{G}_k \to \mathcal{F}$ is $K_1$-covering. From this description, it follows that $K$ is the smallest topology extending $K_1$ for which also families consisting of two sum inclusions $\{ \mathcal{F}_1 \to \mathcal{F}_1 + \mathcal{F}_2, \mathcal{F}_2 \to \mathcal{F}_1 + \mathcal{F}_2 \}$ are covering.

So to prove the proposition, it suffices to show that for any local operator for which $m: 2 \rightarrowtail \mathbf{y}2$ is $j$-dense, we must have that families consisting of two sums inclusions $\{ \mathcal{F}_1 \to \mathcal{F}_1 + \mathcal{F}_2, \mathcal{F}_2 \to \mathcal{F}_1 + \mathcal{F}_2 \}$ are $j$-covering. To show this, consider the pullback square:
\begin{equation*}
\begin{tikzcd}
	\mathbf{y}\mathcal{F}_1 + \mathbf{y} \mathcal{F}_2 \arrow{r} \arrow[tail]{d} & 2 \arrow[tail]{d}{m} \\
	\mathbf{y}(\mathcal{F}_1 + \mathcal{F}_2) \arrow{r} & \mathbf{y}2,
\end{tikzcd}
\end{equation*}
where the map on the bottom of the square is obtained by applying the Yoneda embedding to the coproduct of the maps ${\mathcal F}_1 \to 1$ and ${\mathcal F}_2 \to 1$.
So if $m$ is $j$-dense, then so must be the map on the left of the square. But then it follows from Lemma \ref{lem:frommaclaneandmoerdijk} that $\{ \mathcal{F}_1 \to \mathcal{F}_1 + \mathcal{F}_2, \mathcal{F}_2 \to \mathcal{F}_1 + \mathcal{F}_2 \}$ is $j$-covering.
\end{proof}

Let $\mathcal{L}_\mathrm{st}$ be a first order language enriched with a standardness predicate, as in Section \ref{sec:filtertopos}; we interpret $\mathcal{L}_\mathrm{st}$ in $\mathcal{U}$ just as we did in $\mathcal{N}$, except that we take
\begin{enumerate}
	\item[(v')] for each type $S$, $\llbracket \mathrm{st}_S \rrbracket := \Delta_1 S$,
\end{enumerate}
so that $\llbracket \mathrm{st}_\mathbb{N} \rrbracket$ is again the natural numbers object.

As we foretold, ``internal'' becomes ``internal and $\lor$-free'' in this larger topos. Let $\Vdash_1$ be the forcing relation in $\mathcal{U}$.

\begin{thm}
Let $\varphi(x)$ be an internal, $\lor$-free formula, with free variable $x$ of type $S$, and $(C, \mathcal{F}_I)$ a filter. For all $\alpha \in \llbracket S \rrbracket \mathcal{F}$,
\begin{equation*}
	\mathcal{F} \Vdash_1 \varphi(\alpha)
\end{equation*}
if and only if there exists $i \in I$ such that, for all $u \in \mathcal{F}_i$, it holds that $\varphi(\alpha(u))$.
\end{thm}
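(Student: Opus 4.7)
The plan is to mimic the proof of Theorem~\ref{thm:transfer} (i.e.\ \cite[Theorem 1]{palmgren1998developments}) by induction on the logical complexity of the internal $\lor$-free formula $\varphi$, noting two ways in which the present setting differs from that one. First, the disjunction clause is no longer part of the induction, which is \emph{necessary}: in $\mathcal{U}$ a singleton $K_1$-cover on which either $\varphi$ or $\psi$ holds is strictly stronger than a pointwise dichotomy on a base object, so the analogue of the $\lor$-case would fail. Second, since $K_1$-covers are single morphisms, the existential clause is simpler than in $\mathcal{N}$. The atomic case follows from the subcanonicity of $K_1$: $\mathcal{F} \Vdash_1 R(\alpha)$ corresponds to a factorisation of $\alpha$ through $R$ in $\mathfrak{F}\mathbf{Set}$, which by the description of morphisms in the filter category unfolds to the existence of a base $\mathcal{F}_i$ such that $R(\alpha(u))$ holds for all $u \in \mathcal{F}_i$. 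The conjunction case is handled by taking a common refinement of the two witnessing base objects, and the universal case by rewriting $\mathcal{F} \Vdash_1 \forall z\,\varphi_0(\alpha,z)$ as $\mathcal{F} \times T \Vdash_1 \varphi_0(\alpha\pi_1,\pi_2)$ and invoking the induction hypothesis on the product, whose base objects are the $\mathcal{F}_i \times T$.

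The implication case is the first to require work. For $(\Leftarrow)$, suppose there is a base $\mathcal{F}_i$ on which $\varphi(\alpha(u)) \to \psi(\alpha(u))$ holds pointwise, and let $\beta : \mathcal{G} \to \mathcal{F}$ force $\varphi(\alpha\beta)$; by the induction hypothesis and the continuity of $\beta$, one obtains a base $\mathcal{G}_{j'}$ with $\beta(\mathcal{G}_{j'}) \subseteq \mathcal{F}_i$ on which $\varphi(\alpha\beta(v))$, and hence $\psi(\alpha\beta(v))$, holds pointwise, so the induction hypothesis yields $\mathcal{G} \Vdash_1 \psi(\alpha\beta)$. For $(\Rightarrow)$, define $V := \{u \in C : \varphi(\alpha(u))\}$ and form the restricted filter $\mathcal{F}' := (C, \{\mathcal{F}_j \land V\}_{j \in I})$ with its inclusion $\iota : \mathcal{F}' \hookrightarrow \mathcal{F}$. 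By construction and the induction hypothesis, $\mathcal{F}' \Vdash_1 \varphi(\alpha\iota)$, hence $\mathcal{F}' \Vdash_1 \psi(\alpha\iota)$ by monotonicity; applying the induction hypothesis to $\psi$ produces an $i$ such that $\psi(\alpha(u))$ holds for every $u \in \mathcal{F}_i \land V$, which is precisely the statement that $\varphi(\alpha(u)) \to \psi(\alpha(u))$ holds for every $u \in \mathcal{F}_i$.

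For the existential case, $\mathcal{F} \Vdash_1 \exists z\,\varphi_0(\alpha,z)$ unfolds to the existence of a single $K_1$-cover $\beta : \mathcal{G} \twoheadrightarrow \mathcal{F}$ and an element $\zeta \in \llbracket T \rrbracket \mathcal{G}$ with $\mathcal{G} \Vdash_1 \varphi_0(\alpha\beta,\zeta)$; the induction hypothesis provides a base $\mathcal{G}_j$ on which $\varphi_0(\alpha\beta(v), \zeta(v))$ holds pointwise, and the cover condition supplies a base $\mathcal{F}_i$ with $\mathcal{F}_i \leq \beta(\mathcal{G}_j)$, so that $\exists z\,\varphi_0(\alpha(u), z)$ holds for every $u \in \mathcal{F}_i$. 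Conversely, from such an $\mathcal{F}_i$, the axiom of choice in the metatheory (tacitly assumed, as elsewhere in the paper) supplies a function $\zeta : \mathcal{F}_i \to T$ with $\varphi_0(\alpha(u), \zeta(u))$ pointwise; since $T$ carries only the simple filter base, $\zeta$ represents an element of $\llbracket T \rrbracket \mathcal{F}$, and the induction hypothesis for $\varphi_0$ gives $\mathcal{F} \Vdash_1 \varphi_0(\alpha, \zeta)$, whence $\mathcal{F} \Vdash_1 \exists z\,\varphi_0(\alpha,z)$. I expect the main subtlety to lie in the $(\Rightarrow)$ direction of the implication case, where the restricted filter $\mathcal{F}'$ must be set up so that the induction hypothesis applies to $\varphi$ upwards and to $\psi$ downwards; crucially, no clause of the induction invokes the finite joins of base objects that characterise $K$ as opposed to $K_1$, which is exactly why the restriction to $\lor$-free formulae is both necessary and sufficient.
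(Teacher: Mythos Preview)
Your proposal is correct and is precisely the intended argument: the paper states this theorem without proof, as the $K_1$-analogue of Theorem~\ref{thm:transfer}, whose proof (from \cite{palmgren1998developments} and \cite{moerdijk1995model}) is the same induction on logical complexity you carry out, with the $\lor$-clause omitted and the $\exists$-clause simplified to single covers.

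One refinement is worth making. In the $(\Leftarrow)$ direction of the existential case you invoke the axiom of choice to pick a section $\zeta:\mathcal{F}_i\to T$, calling it ``tacitly assumed''. The paper is in fact explicit that nothing prior to $\mathsf{HAC}^\mathrm{st}$ (respectively $\mathsf{AC}^\mathrm{st}$) uses choice, and the transfer theorem should remain choice-free. The standard way to avoid it, as in Moerdijk's original argument, is to take as the covering object the filter $\mathcal{G}$ on $C\times T$ with base sets
\[
\mathcal{G}_j := \{(u,z)\in\mathcal{F}_j\times T : \varphi_0(\alpha(u),z)\},\quad j\in I.
\]
Then $\pi_1:\mathcal{G}\to\mathcal{F}$ is $K_1$-covering (for any $j$, pick $k$ with $\mathcal{F}_k\subseteq\mathcal{F}_i\cap\mathcal{F}_j$; then $\mathcal{F}_k\subseteq\pi_1\mathcal{G}_j$), and $\mathcal{G}\Vdash_1\varphi_0(\alpha\pi_1,\pi_2)$ holds by construction and the induction hypothesis. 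This yields the witness without any choice in the metatheory.
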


\begin{cor}[Transfer theorem]
Let $\varphi$ be an internal and $\lor$-free sentence. Then $\varphi$ is true if and only if $\Vdash_1 \varphi$.
\end{cor}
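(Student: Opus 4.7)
The plan is to prove the statement by induction on the structure of the internal $\lor$-free formula $\varphi$, following the template of the proof of Theorem \ref{thm:transfer} in \cite{palmgren1998developments}. The key point is that the restriction to $\lor$-free formulas is exactly what is needed to replace the use of finite $K$-covers by single $K_1$-covers in the Kripke-Joyal semantics. Indeed, the interpretation of every logical connective other than $\lor$ is insensitive to whether covering families are finite or singletons, so the only case where the argument for $\mathcal{N}$ genuinely used the size of covers is removed.

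For atomic formulas $\varphi \equiv R(t_1,\ldots,t_n)$, the claim follows from the fact that $K_1$ is subcanonical over the coherent category $\mathfrak{F}\mathbf{Set}$, so that $\llbracket R \rrbracket = \mathbf{y}R$ and $\mathcal{F} \Vdash_1 R(\alpha)$ unfolds to the existence of a base $\mathcal{F}_i$ on which $\alpha$ factors through $R \rightarrowtail S_1\times\cdots\times S_n$. The cases of $\land$ and $\forall z$ are direct applications of the inductive hypothesis to both conjuncts, respectively to the composite $\alpha\pi_1$ in $\mathcal{F} \times T$. The case of $\to$ uses the semantics in a Heyting pretopos, together with the fact that one can always refine a base of $\mathcal{F}$ to witness both premise and conclusion simultaneously.

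The main step is $\exists z\,\varphi(z,\alpha)$. Suppose $\mathcal{F} \Vdash_1 \exists z\,\varphi(z,\alpha)$; by the Kripke-Joyal semantics with topology $K_1$, there exist a single covering morphism $g:\mathcal{G}\twoheadrightarrow \mathcal{F}$ and a section $\beta:\mathcal{G}\to T$ with $\mathcal{G} \Vdash_1 \varphi(\beta,\alpha g)$. By the inductive hypothesis, some base $\mathcal{G}_j$ makes $\varphi(\beta(v),\alpha(g(v)))$ true for all $v\in\mathcal{G}_j$; by the definition of $K_1$-covering, there is a base $\mathcal{F}_i \leq g(\mathcal{G}_j)$, so every $u\in\mathcal{F}_i$ admits a preimage $v$ and hence a witness $\beta(v)$. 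For the converse, given $\mathcal{F}_i$ such that $\forall u\in\mathcal{F}_i\,\exists x\in T\,\varphi(x,\alpha(u))$ in the metatheory, one uses the axiom of choice to produce $f:\mathcal{F}_i\to T$ with $\varphi(f(u),\alpha(u))$ for all $u\in\mathcal{F}_i$; this determines a morphism $\mathcal{F}\to T$ in $\mathfrak{F}\mathbf{Set}$ to which the inductive hypothesis applies, giving $\mathcal{F}\Vdash_1\varphi(f,\alpha)$ and hence $\mathcal{F}\Vdash_1\exists z\,\varphi(z,\alpha)$.

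The case that would fail, and that motivates the $\lor$-free restriction, is disjunction: in $\mathcal{N}$, $\mathcal{F}\Vdash\varphi\lor\psi$ is witnessed by a \emph{finite} cover $\{\mathcal{G}_k\twoheadrightarrow\mathcal{F}\}_{k=1}^n$ on each piece of which either $\varphi$ or $\psi$ holds, which matches the metatheoretic $\varphi\lor\psi$; in $\mathcal{U}$, a singleton cover would have to force one disjunct outright, a strictly stronger statement already failing at $\mathcal{F}=1$. Since $\lor$ is excluded from the inductive clauses, this obstruction never arises, and the transfer theorem goes through. The corollary about sentences is then immediate from the main theorem applied at $\mathcal{F}=1$, whose only base object is $1$ itself.
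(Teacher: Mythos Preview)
Your argument is correct and follows essentially the same inductive route as the paper's (implicit) proof, which simply relies on the analogy with Theorem~\ref{thm:transfer} and its corollary for $\mathcal{N}$; the corollary itself is immediate from the more general theorem applied at $\mathcal{F}=1$.

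One small remark: in the converse direction of the $\exists$ case you appeal to the axiom of choice to pick a witnessing function $f:\mathcal{F}_i\to T$. This works, but it is avoidable --- and the paper is explicit that no choice is needed for transfer (``So far, we used no principles whose constructive status is controversial\ldots''). The standard way around it is to form the filter $\mathcal{G}$ on $C\times T$ with base sets $\mathcal{G}_j := \{(u,x):u\in\mathcal{F}_j,\ \varphi(x,\alpha(u))\}$; the first projection $\pi_1:\mathcal{G}\to\mathcal{F}$ is $K_1$-covering since $\pi_1(\mathcal{G}_j)=\mathcal{F}_j$ by hypothesis, and the second projection $\pi_2$ supplies the generic witness, so the inductive hypothesis gives $\mathcal{G}\Vdash_1\varphi(\pi_2,\alpha\pi_1)$ and hence $\mathcal{F}\Vdash_1\exists z\,\varphi(z,\alpha)$. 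This is the same device used in the proof of $\mathsf{OS}^*$.
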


The standardness predicate, and consequently the existential quantifier, are de-herbrandised, as we wished.
\begin{lem}
Let $\mathcal{F}$ be a filter, $S$ a type of $\mathcal{L}$, and $\alpha \in \llbracket S \rrbracket \mathcal{F}$. Then:
\begin{enumerate}[label=(\alph*)]
	\item $\mathcal{F} \Vdash_1 \mathrm{st}_S(\alpha)$ if and only if there exist a covering map $\beta: \mathcal{G} \twoheadrightarrow \mathcal{F}$, and an element $x \in S$, such that $\alpha\beta = x!$ in $\mathfrak{F}\mathbf{Set}$;
	\item $\mathcal{F} \Vdash_1 \forallst y:T \, \Phi(\alpha, y)$ if and only if, for all $y \in T$, $\mathcal{F} \Vdash_1 \Phi(\alpha, y!)$;
	\item $\mathcal{F} \Vdash_1 \existst y:T \, \Phi(\alpha, y)$ if and only if there exists $y \in T$ such that $\mathcal{F} \Vdash_1 \Phi(\alpha, y!)$.
\end{enumerate}
\end{lem}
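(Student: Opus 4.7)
All three claims are the $\mathcal{U}$-analogues of Lemmas 3.2 and 3.3, and my proof plan closely parallels those, the key structural simplification being that $K_1$-covers are single epis rather than finite families. Throughout I rely on the standard forcing semantics for Grothendieck toposes, and in particular on the \emph{local character} property: for any formula $\Theta(z)$, any $K_1$-cover $\beta: \mathcal{G} \twoheadrightarrow \mathcal{F}$ and any $\alpha \in \llbracket S \rrbracket \mathcal{F}$, if $\mathcal{G} \Vdash_1 \Theta(\alpha\beta)$ then $\mathcal{F} \Vdash_1 \Theta(\alpha)$.

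For part (a), I would use the description of $\Delta_1 S$ given just before the lemma: $(\Delta_1 S)(\mathcal{F})$ consists of the constant morphisms $\mathcal{F} \to S$, i.e., maps of the form $x!$ for some $x \in S$. By the categorical semantics of atomic subobject predicates, $\mathcal{F} \Vdash_1 \mathrm{st}_S(\alpha)$ if and only if $\alpha: \mathcal{F} \to \mathbf{y}S$ factors through the subobject $\Delta_1 S \rightarrowtail \mathbf{y}S$ after pullback along some $K_1$-cover; since $K_1$-covers are single epis, this is exactly the existence of a cover $\beta: \mathcal{G} \twoheadrightarrow \mathcal{F}$ and an $x \in S$ with $\alpha\beta = x!$.

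For part (b), I would unfold $\forallst y:T\, \Phi(\alpha, y)$ as $\forall y:T\,(\mathrm{st}_T(y) \to \Phi(\alpha, y))$ and use the forcing clauses for $\forall$ and $\to$. The forward direction follows by instantiating the universal quantifier with $y!: \mathcal{F} \to \mathbf{y}T$ at the identity cover, using that $y!$ is standard. For the converse, given any $\beta: \mathcal{G} \to \mathcal{F}$ and $\sigma \in \llbracket T\rrbracket \mathcal{G}$ with $\mathcal{G} \Vdash_1 \mathrm{st}_T(\sigma)$, part (a) supplies a further cover $\gamma: \mathcal{H} \twoheadrightarrow \mathcal{G}$ and a $y \in T$ with $\sigma\gamma = y!$; then monotonicity yields $\mathcal{H} \Vdash_1 \Phi(\alpha\beta\gamma, y!)$, and local character along $\gamma$ descends this to $\mathcal{G} \Vdash_1 \Phi(\alpha\beta, \sigma)$.

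Part (c) is analogous. The direction $\Leftarrow$ is immediate, since $y!$ is standard and one may take the identity cover. For $\Rightarrow$, unfolding $\existst y:T\, \Phi(\alpha, y)$ as $\exists y:T\,(\mathrm{st}_T(y) \land \Phi(\alpha, y))$ and applying the forcing clause for $\exists$ in the $K_1$ topology gives a \emph{single} cover $\beta: \mathcal{G} \twoheadrightarrow \mathcal{F}$ and a $\sigma \in \llbracket T\rrbracket \mathcal{G}$ with $\mathcal{G} \Vdash_1 \mathrm{st}_T(\sigma) \land \Phi(\alpha\beta, \sigma)$. Part (a), applied to the standardness conjunct, supplies a further cover $\gamma: \mathcal{H} \twoheadrightarrow \mathcal{G}$ and an element $y \in T$ with $\sigma\gamma = y!$; monotonicity along $\gamma$ gives $\mathcal{H} \Vdash_1 \Phi(\alpha\beta\gamma, y!)$, and local character along the composite cover $\beta\gamma: \mathcal{H} \twoheadrightarrow \mathcal{F}$ descends this to $\mathcal{F} \Vdash_1 \Phi(\alpha, y!)$, which is what was needed. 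The only delicate point in the whole argument is this use of local character in (b) and (c); no Herbrand sequences appear because the singleton-cover topology has already absorbed all ``branching,'' which is exactly the de-herbrandisation one would expect from the passage $\mathcal{N} \leadsto \mathcal{U}$.
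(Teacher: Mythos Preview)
Your argument is correct and is exactly the ``easy variation'' the paper has in mind: the paper gives no details here beyond pointing back to the corresponding lemma for $\mathcal{N}$, and your unfolding of the forcing clauses, use of the description of $\Delta_1 S$, and appeal to local character along single covering maps are precisely how one carries that earlier proof over to the $K_1$ topology.
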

\begin{proof}
Easy variation on the proof of Lemma \ref{lem:quantifiers}.
\end{proof}

\begin{prop}
The following principles all hold in $\mathcal{U}$: $\mathsf{OS}^*_\lor, \mathsf{US}^*_\lor, \mathsf{IP}_{\forall \, \lor}^\mathrm{st}, \mathsf{NU}$, and, if the axiom of choice holds in the metatheory, $\mathsf{AC}^\mathrm{st}$.
\end{prop}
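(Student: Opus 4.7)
The plan is to adapt each of the arguments from Section~\ref{sec:filtertopos} to the topos $\mathcal{U}$, using the just-stated analogue of Lemma~\ref{lem:quantifiers} for $\Vdash_1$. The key conceptual simplification is that each $K$-covering family $\{\beta_k:\mathcal{G}_k\to\mathcal{F}\}_{k=1}^n$ is replaced throughout by a \emph{single} $K_1$-covering morphism $\beta:\mathcal{G}\twoheadrightarrow\mathcal{F}$, and the ``finite sequence of witnesses'' produced by the forcing semantics of $\existst$ in $\mathcal{N}$ collapses to a single witness in $\mathcal{U}$. Since both the filter construction and transfer for internal formulae were carried out for $\lor$-free formulae in the restatement of the transfer theorem above, the $\lor$-free hypothesis on the $\mathsf{OS}^*_\lor, \mathsf{US}^*_\lor$ schemas is exactly what is needed to invoke the transfer theorem inside the arguments.

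For $\mathsf{OS}^*_\lor$ I would reuse verbatim the filter $(C\times S^*,\mathcal{G}_{I\times S^*})$ built in the $\mathcal{N}$-case, with base sets $\mathcal{G}_{(i,t)}=\{(u,s):u\in\mathcal{F}_i,\,t\subseteq s,\,\varphi(\alpha(u),s)\}$; the projection $\pi_1:\mathcal{G}\to\mathcal{F}$ is a single morphism and the same base-set calculation shows it is $K_1$-covering, while $\pi_2$ witnesses $\mathrm{hyper}$. For $\mathsf{US}^*_\lor$ I would first prove the analogue of the auxiliary lemma (if $\Vdash_1\exists x:S\,\Phi(x)$ then $\forall y\,(\forall x\,(\Phi(x)\to\varphi(y,x))\to\existst x\,\varphi(y,x))$ holds in $\mathcal{U}$): given a single $K_1$-cover $\sigma:\mathcal{G}\twoheadrightarrow 1$ with $\mathcal{G}\Vdash_1\Phi(\sigma)$ and the hypothesis $\mathcal{F}\times S\Vdash_1\Phi(\pi_2)\to\varphi(\alpha\pi_1,\pi_2)$, pull back along $\pi_2$ and apply transfer for $\varphi$ on the product $\mathcal{F}\times\mathcal{G}$ to produce base sets $\mathcal{F}_i$, $\mathcal{G}_j$ and an element $x\in\sigma\mathcal{G}_j$ with $\mathcal{F}\Vdash_1\varphi(\alpha,x!)$; then specialise $\Phi(s):=\mathrm{hyper}(s)$, using that $\mathsf{OS}^*_\lor$ (just proved in $\mathcal{U}$) supplies the required $\exists s\,\mathrm{hyper}(s)$. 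For $\mathsf{NU}$, from $\mathcal{F}\times T\Vdash_1\existst x:S\,\Phi(\alpha\pi_1,x,\pi_2)$ the new clause (c) of the quantifier lemma yields a single $x\in S$ such that $\mathcal{F}\times T\Vdash_1\Phi(\alpha\pi_1,x!,\pi_2)$, which, using $x!=x!\pi_1$, is just $\mathcal{F}\Vdash_1\forall y\,\Phi(\alpha,x!,y)$, so $\mathcal{F}\Vdash_1\existst x\,\forall y\,\Phi(\alpha,x,y)$.

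For $\mathsf{IP}_{\forall\,\lor}^\mathrm{st}$ I would follow the HIP proof nearly line by line: write $\alpha^*\llbracket\forallst x\,\varphi(z,x)\rrbracket$ as $\mathbf{y}\mathcal{H}$ for an explicit filter $\mathcal{H}$, decompose $\alpha^*\llbracket\existst y\,\Psi(z,y)\rrbracket=\bigvee_{y\in T}F_y$ (indexing by $y\in T$ rather than $t\in T^*$, as the $U$-semantics of $\existst y$ no longer tupleises witnesses), pull back along $m:\mathbf{y}\mathcal{H}\rightarrowtail\bigvee_y F_y$, and cover each $m^*F_y$ by representables. Then Lemma~\ref{lem:frommaclaneandmoerdijk}, applied with the $K_1$-topology, produces a \emph{single} morphism $\mathbf{y}\beta:\mathbf{y}\mathcal{G}\twoheadrightarrow\mathbf{y}\mathcal{H}$ factoring through one $F_{y_0}$, yielding $\mathcal{F}\Vdash_1\forallst x\,\varphi(\alpha,x)\to\Psi(\alpha,y_0!)$ and hence $\mathcal{F}\Vdash_1\existst y\,(\forallst x\,\varphi(\alpha,x)\to\Psi(\alpha,y))$. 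For $\mathsf{AC}^\mathrm{st}$ under metatheoretic AC the argument is even simpler than for $\mathsf{HAC}^\mathrm{st}$: Lemma (c) gives, for each $x\in S$, a \emph{single} $y_x\in T$ with $\mathcal{F}\Vdash_1\Phi(\alpha,x!,y_x!)$, and AC in the metatheory picks a function $f\in S\to T$ with $f(x)=y_x$; interpreting $f$ as $f!$ and using $\llbracket f!(x!)\rrbracket=\llbracket f(x)!\rrbracket$ completes the proof.

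The main obstacle I anticipate is the underspill clause: the auxiliary lemma used in $\mathcal{N}$ relied on extracting an element of a \emph{finite union} $\sigma_1\mathcal{G}_{1,j_1}\cup\cdots\cup\sigma_n\mathcal{G}_{n,j_n}$ coming from a $K$-cover, and one has to verify that with a single $K_1$-cover $\sigma:\mathcal{G}\twoheadrightarrow 1$ the same choice of element still commutes appropriately with the product projections and yields a genuinely standard witness $x!$ at $\mathcal{F}$. Once this single-cover variant is established, the other four items are essentially direct transcriptions of the corresponding arguments in $\mathcal{N}$ under the dictionary ``finite family $\leadsto$ single morphism, finite sequence of witnesses $\leadsto$ single witness.''
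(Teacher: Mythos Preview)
Your proposal is correct and is precisely the elaboration the paper intends: its own proof is the single line ``Same as for the corresponding principles in $\mathcal{N}$, with only minor adjustments required (pick single covering maps instead of finite families),'' and what you have written is exactly those minor adjustments spelled out principle by principle. The only point worth flagging is that in the $\mathsf{IP}_{\forall\,\lor}^{\mathrm{st}}$ case you should note explicitly that the $\lor$-free hypothesis on $\varphi$ is what lets you invoke the $\mathcal{U}$-transfer theorem to write $\llbracket\varphi(z,x!)\rrbracket = \mathbf{y}\{z\in U\mid\varphi(z,x)\}$, just as you already observed for $\mathsf{OS}^*_\lor$ and $\mathsf{US}^*_\lor$.
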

\begin{proof}
Same as for the corresponding principles in $\mathcal{N}$, with only minor adjustments required (pick single covering maps instead of finite families).
\end{proof}

Before moving on to the conclusions, we want to remark that, irrespective of any interest in nonstandard arithmetic, and with the \emph{caveat} about $\mathsf{AC}^\mathrm{st}$ and the axiom of choice, $\mathcal{U}$ also provides a model for the logic of the ``standard'' Diller-Nahm translation, under the interpretation $\llbracket 0 \rrbracket_\land := \Delta_1 \mathbb{N}$, $\llbracket 0 \to 0 \rrbracket_\land := \Delta_1 (\mathbb{N} \to \mathbb{N})$, etc. In this case, we obtain a weaker transfer theorem for $\lor$-free formulae whose quantifiers are all bounded, i.e.\ they range over some finite sequence.

\section{Conclusions and directions for future work} \label{sec:conclusions}
If we are to sum up what we believe are \emph{conceptually} interesting points of this paper, it may come down to the following.

\begin{enumerate}
	\item \emph{Sequence overspill and underspill}. These are two principles, one dual to each other, that not only seem to be useful, and constructively acceptable generalisations of overspill and underspill to higher types; but they are also linked to well-known nonconstructive principles, the lesser limited principle of omniscience, and Markov's principle, suggesting that classical modes of reasoning can be recovered in a constructive nonstandard calculus.
	\item \emph{The significance of $\mathcal{U}$}. The filter topos $\mathcal{U}$ doubles as a model of the logic of the Diller-Nahm interpretation, and as a cue to its extension with uniform quantifiers - the uniform Diller-Nahm interpretation. This was previously unknown, and might lead to an improved understanding of the underlying, \emph{geometric} structure of Diller-Nahm logic.
	\item \emph{A better view on herbrandisation}. The comparison of $\mathcal{N}$ with $\mathcal{U}$ provided a categorical counterpart to herbrandisation, and allowed for a refined analysis of its effects. This includes the re-contextualisation of $\mathsf{NCR}$ as a herbrandised uniformity principle; on the contrary, the role of finite sequences in $\mathsf{HGMP}^\mathrm{st}$ appeared to be a byproduct of their role in $\mathsf{US}^*$, rather than the result of herbrandisation.
\end{enumerate}

A final consideration: the introduction of the nonstandard Dialectica interpretation in \cite{van2012functional} had nonstandard analysis as its main motivation; the benchmark to meet was eliminating overspill and underspill from proofs, retrieving what computational content they may have. But our analysis of uniform Diller-Nahm suggests an alternative route: one where we start with the Dialectica interpretation, and progressively apply small ``patches'', fixing whichever shortcomings might arise.

So Dialectica requires decidability of atomic formulae: we may be unhappy with that, and turn to Diller-Nahm. Then we could add uniform quantifiers \emph{\`a la} Hernest, with optimisation of program extraction in mind; which would lead us to uniform Diller-Nahm. Then, we notice that the system we obtain - $\text{E-HA}^{\omega*}_{\mathrm{st}\lor}$ and characteristic principles - is just one connective away from being a system of intuitionistic arithmetic. Also, fixing that may require that we weaken the existence property; this way, we may come up with herbrandisation, and obtain, in principle, the $D_\mathrm{st}$ interpretation, \emph{without ever actually thinking} of nonstandard arithmetic.

In fact, we can take the ``equation''
\begin{equation*}
	\text{standardness} \simeq \text{herbrandised calculability}
\end{equation*}
as a definition of sorts; one that replaces the intuition of nonstandard natural numbers as having a separate existence, lying, somewhere beyond reach, on a line together with the finite ones, with an ``operational'' interpretation: a nonstandard number is badly incalculable - so badly, that it cannot even be narrowed down to a finite selection of candidates.

We conclude with a review of new questions that our results raise.

Concerning the proof theory of nonstandard arithmetic, we would like to know how independent the principles $\mathsf{OS}^*$ and $\mathsf{US}^*$ are. We know that the Herbrand realisability interpretation vacuously accepts the former, yet does \emph{not} have a realiser for the latter \cite{van2012functional}; so the Herbrand topos from \cite{van2011herbrand} provides a model of nonstandard arithmetic with full transfer, but no underspill principle. We do not know, however, of nonstandard models where overspill holds, and underspill does not.

Moreover, we defined a new functional interpretation, but ignore, so far, how useful it is for applications. Its similarity to light Dialectica is encouraging; on the other hand, the use of functional interpretations has been most successful in program extraction from \emph{classical} proofs, and we have not investigated yet how well uniform Diller-Nahm composes with negative translations, such as Kuroda's \cite{kuroda1951intuitionistische}.
	
In light of the results of Chapter 3, Palmgren's work on the topos $\mathcal{N}$ indicates that the characteristic principles of nonstandard Dialectica lead to a useful calculus for nonstandard analysis. We conjectured that the characteristic principles of uniform Diller-Nahm may be a good axiomatisation of Lifschitz's calculability arithmetic \cite{lifschitz1985calculable}; is this correct, and could this also be a useful calculus by itself?
	
On a more speculative note, Oliva provided in \cite{oliva2006unifying} a unified view of the Dialectica, Diller-Nahm, and modified realisability interpretations, through \emph{linear logic}. Is there an equivalent of herbrandisation in linear logic - connected, perhaps, to the bang ($!$) modaliser - such that nonstandard Dialectica and Herbrand realisability, too, would be amenable to such a treatment?

We hope that these, and related questions can be answered in future work.

\bibliographystyle{plain}
\small \bibliography{bibliography}

\end{document}